\definecolor{darkblue}{rgb}{0,0,0.4}
\DeclareMathOperator*{\colim}{colim}
\DeclareMathOperator*{\hocolim}{hocolim}
\DeclareMathOperator*{\hocofib}{hocofib}
\newtheorem{theorem}{Theorem}[section] 
\newtheorem{lemma}[theorem]{Lemma}     
\newtheorem{corollary}[theorem]{Corollary}
\newtheorem{proposition}[theorem]{Proposition}
\theoremstyle{definition}
\newtheorem{definition}[theorem]{Definition}
\newtheorem{remark}[theorem]{Remark}
\newtheorem{question}[theorem]{Question}
\newtheorem{example}[theorem]{Example}
\newtheorem{construction}[theorem]{Construction}
\newcommand{\Irr}{\mathrm{Irr}^+}
\newcommand{\Sum}{\mathrm{Sum}}
\newcommand{\PAct}{{\bf PAct}}
\newcommand{\BPAct}{{\bf BPAct}}
\newcommand{\Top}{{\bf CGTop}}
\newcommand{\PCat}{{\bf PCat}}
\newcommand{\br}[1]{\left[ #1 \right]}
\newcommand{\Hol}{\mathrm{Hol}}
\renewcommand{\br}[1]{\left[ #1 \right]}
\newcommand{\bt}{\bullet}
\newcommand{\rh}{\widetilde{\Pi}}
\newcommand{\bbC}{\mathbb{C}}
\newcommand{\bbN}{\mathbb{N}}
\newcommand{\bbR}{\mathbb{R}}
\newcommand{\bbZ}{\mathbb{Z}}
\newcommand{\bbK}{\mathbb{K}}
\newcommand{\bbQ}{\mathbb{Q}}
\newcommand{\ignore}[1]{}
\newcommand{\A}{\mathcal{A}}
\newcommand{\B}{\mathcal{B}}
\newcommand{\mcC}{\mathcal{C}}
\newcommand{\mcG}{\mathcal{G}}
\newcommand{\mcK}{\mathcal{K}}
\newcommand{\M}{\mathcal{M}}
\newcommand{\qcd}{\bbQ cd}
\newcommand{\T}{\mathcal{T}}
\newcommand{\V}{\mathcal{V}}
\newcommand{\bS}{\mathbf{S}}
\newcommand{\leqs}{\leqslant}
\newcommand{\geqs}{\geqslant}
\newcommand{\heq}{\simeq}
\newcommand{\maps}{\longrightarrow}
\newcommand{\injects}{\hookrightarrow}
\newcommand{\isom}{\cong}
\newcommand{\cross}{\times}
\newcommand{\wt}[1]{\widetilde{#1}} 
\newcommand{\Rdef}{R^{\mathrm{def}}}
\newcommand{\Rep}{\mathrm{Rep}}
\newcommand{\Hom}{\mathrm{Hom}}
\newcommand{\GL}{GL}
\newcommand{\U}{U}
\newcommand{\hU}{hU}
\newcommand{\K}{K^{\mathrm{def}}}
\newcommand{\hofib}{\mathrm{hofib}}
\newcommand{\Map}{\mathrm{Map}}
\newcommand{\flatc}{\mathcal{A}^{\flat}}
\newcommand{\Gr}{\mathrm{Gr}}
\newcommand{\ku}{\mathbf{ku}}
\newcommand{\Susp}{\Sigma}
\newcommand{\xmaps}{\xrightarrow}
\newcommand{\srm}[1]{\stackrel{#1}{\maps}}
\newcommand{\srt}[1]{\stackrel{#1}{\to}}
\newcommand{\sm}{\wedge}
\newcommand{\goesto}{\mapsto}
\newcommand{\rK}{\wt{K}^\mathrm{def}}
\newcommand{\ol}[1]{\overline{#1}}
\def\co{\colon\thinspace}
\newcommand{\Img}{\mathrm{Im}}
\newcommand{\e}{\emph}
 \newcommand{\ra}{\rangle}
\newcommand{\la}{\langle}
\newcommand{\defn}{\mathrel{\mathop :}=}
\title[The topological Atiyah--Segal map]
 {The topological Atiyah--Segal map}
\author{Daniel A. Ramras}
\thanks{The author was partially supported by the Simons Foundation (\#279007 and \#579789).}
\address{IUPUI, 402 N Blackford St, Indianapolis, IN, 46032}
\email{dramras@iu.edu}
\subjclass[2020]{55R37,  53C05 (primary), 55P43,  55R50 (secondary)}
\begin{document}

\begin{abstract}  Associated to each finite dimensional linear representation of a group $G$, there is a vector bundle over the classifying space $BG$.  This construction was studied extensively for compact groups by Atiyah and Segal. We introduce a homotopy theoretical framework for studying the Atiyah--Segal construction in the context of infinite discrete groups, taking into account the topology of representation spaces.   
 
We explain how this framework relates to the Novikov conjecture, and we consider applications to spaces of flat connections on the over the 3--dimensional Heisenberg manifold and families of flat bundles over classifying spaces of groups satisfying Kazhdan's property (T).
\end{abstract}

\maketitle

\tableofcontents

\section{Introduction}

In  the 1950s and 1960s, Atiyah and Segal studied a construction that associates a vector bundle to each (complex) representation of a group $G$.  This construction yields a map
$$R[G] \maps K^* (BG),$$
from the complex representation ring of $G$ to the complex $K$--theory of its classifying space $BG$.  
The classical Atiyah--Segal Completion Theorem~\cite{Atiyah-Segal} states that when $G$ is a compact Lie group, this map becomes an isomorphism after completing $R[G]$ at its augmentation ideal.

We introduce an analogous map for infinite discrete groups, where the topology of the representation spaces $\Hom(G, U(n))$ plays a key role.  Loosely speaking, this map assigns to a spherical family of representations 
$$\rho \co S^m \to \Hom(G, U(n))$$ the $K$--theory class of the associated vector bundle 
$$E_{\rho} \to S^m \cross BG$$
 with holonomy $\rho$.  See Section~\ref{TAS-sec} for the definition of $E_{\rho} $, and a more precise statement along these lines (Theorem~\ref{TAS}).  The construction ${\rho} \goesto E_{\rho}$ was considered previously in~\cite{RWY}, where it was used to study the (strong) Novikov conjecture, and in~\cite{Baird-Ramras-smoothing}, where it was used to obtain cohomological lower bounds on the 
homotopy groups of spaces of flat, unitary connections.

The appropriate context for this construction is that of \e{deformation} $K$--\e{theory}.  
The most down-to-earth description of
the deformation $K$--theory of a group $G$ 
is that it arises from the homotopy group completion $\Omega B\Rep(G)$, where $\Rep(G)$ is the topological monoid
$$\Rep(G) = \coprod_n \Hom(G, \U(n)),$$
with block sum of matrices as the monoid operation. 
In particular, in positive degrees the reduced deformation $K$--theory groups $\rK_* (G)$ agree with the homotopy groups of 
$\Omega B\Rep(G)$.
This is discussed in detail in Section~\ref{Kdef-sec}, where $\Omega B\Rep(G)$ is compared to more highly structured models for deformation $K$--theory.
General linear   deformation $K$--theory is obtained by replacing $\U(n)$ by $\GL(n) = \GL_n (\bbC)$.
The (reduced, unitary) topological Atiyah--Segal map is a homomorphism
\begin{equation} \label{alpha-intro} \wt{\alpha}_* \co \rK_* (G) \defn \pi_* \Omega B\Rep(G) \maps \wt{K}^{-*} (BG),
\end{equation}
induced by a map of topological monoids arising from the natural maps 
\begin{equation}\label{B-intro}B\co \Hom(G, \U(n))\maps \Map_* (BG, B\U(n)).\end{equation}  
The details of this construction appear in Section~\ref{TAS-sec}.

Deformation $K$--theory has proven difficult to compute, and we do not expect any uniform description of its relationship to topological $K$--theory. This stands in contrast to 
similar functors such as algebraic $K$--theory of the group ring, or the $K$--theory of group $C^*$--algebras, where there are general conjectures describing this functors in terms of homological data.
We view this as a positive feature of the theory: it is subtle enough to capture delicate information about the group in question, so that when computation can be achieved, concrete consequences follow.

We consider several applications in this article.

\begin{itemize}

\item In Section~\ref{prev-sec}, we reinterpret a result from~\cite{RWY}  to show that rational surjectivity of $\wt{\alpha}_*$  in high dimensions implies the strong Novikov conjecture (Theorem~\ref{Nov}).  
Thus surjectivity of $\wt{\alpha_*}$ should be viewed as a very strong Novikov-type property.
Earlier work of the author implies that
surjectivity holds for surface groups (Theorem~\ref{TAS-2}).
 
\item In Section~\ref{H-sec}, we use Tyler Lawson's calculations of deformation $K$--theory for the 3--dimensional Heisenberg group $\pi_1 N^3$ to produce huge families of homotopy classes in the space of flat, unitary connections on bundles over the corresponding 3--dimensional Heisenberg manifold (Theorem~\ref{unbdd} and Corollary~\ref{fl-mon}).  This shows a marked difference between gauge theory in 2-- and 3--dimensions: over surfaces, homotopy in the space of flat connections is tightly controlled by Yang--Mills theory and complex geometric considerations, but in 3--dimensions the flood gates open. 

We note that for these results, the spectrum-level description of the topological Atiyah--Segal map is not really needed; one could instead take Theorem~\ref{TAS} as a definition.
 
\item In Section~\ref{image-sec}, we show that the topological Atiyah--Segal map is actually a map of $E_\infty$ rings, and consider some applications.
First, a result of S. P. Wang, combined with computational methods due to Lawson, allows us to calculate the deformation $K$--theory of groups satisfying Kazhdan's property (T), and to describe the topological Atiyah--Segal map for such groups (Propositions~\ref{K-T} and~\ref{T-prop}).  We then deduce that for property (T) groups admitting finite classifying spaces, the vector bundles  $E_\rho$ associated to
spherical families of unitary representations always represent torsion classes in  $\wt{K}^0 (S^m \cross BG)$ (Theorem~\ref{T-thm}). Finally, we show that the topological Atiyah--Segal map for the Heisenberg group fails to be surjective (Proposition~\ref{alpha-H}).

\end{itemize}

\section*{Acknowledgements}\label{ackref}
The results in Section~\ref{TAS-sec} arose from  conversations among the author, Willett, and Yu surrounding the work in~\cite{RWY}.
Proposition~\ref{tf} arose from discussions with Robert Lipshitz regarding work of his student Kristen Hendricks~\cite{Hendricks}.
The author thanks Baird, Lipshitz, Willett, and Yu, as well as Markus Szymik and Peter May, for helpful discussions. 
 
\section{Permutative categories arising from group actions}\label{act-sec}

In this section we introduce a framework for producing permutative categories, internal to the category $\Top$ of compactly generated topological spaces, from certain sequences of group actions. All topological notions, such as topological groups and group actions, should be interpreted in the category $\Top$.  

In what follows, we will assume familiarity with the notion of permutative categories, as defined in~\cite{May-perm}. 
We briefly recall the definition. A \e{permutative category} is a (small) internal category $\mathcal{C}$ in $\Top$ equipped with a distinguished unit object $*$ and a continuous functor $\oplus \co \mathcal{C}\cross \mathcal{C}\to \mathcal{C}$ which is \e{strictly} associative  and unital, and admits a natural commutativity isomorphism satisfying certain coherence requirements. Permutative functors between permutative categories are continuous functors that are appropriately compatible with the permutative structures. The category of permutative categories and permutative functors will be denoted $\PCat$.

The framework introduced here will be used in subsequent sections to give compatible descriptions of deformation $K$--theory and topological $K$--theory, facilitating the construction of the topological Atiyah--Segal map as a morphism of  spectra.  The proofs of the claims made in this section are all routine, and in fact relatively short, and will mostly be left to the reader.   
In Section~\ref{ring-sec}, we briefly explain how this theory can be enhanced to produce \e{bipermutative categories} and hence ring spectra.

We will use the following terminology regarding (topological) group actions: if $X$ is a $G$--space, $Y$ is an $H$--space, and $\phi \co G\to H$ is a (continuous) homomorphism,  then a map $f\co X\to Y$ is called $\phi$--equivariant, or equivariant with respect to $\phi$, if 
$$f(g\cdot x) = \phi(g) \cdot f(x)$$ 
for all $g\in G$, $x\in X$.  

\subsection{Action sequences}

The canonical example to keep in mind when reading the following definition is  the unitary (or general linear) groups acting on themselves by conjugation, with the usual matrix block sum operations (see Example~\ref{taut-add}).

\begin{definition}\label{p-action-seq}
A permutative (left) action sequence is a sextuple
$$\A = (I, \{G_i\}_{i\in I}, \{X_i\}_{i\in I}, *, \oplus, \{C_{i,j}\}_{i,j\in I}),$$ where:
\begin{itemize}
\item $I$ is a commutative monoid, with identity element $0$ and operation $+$;
\item Each $G_i$ is a topological group with identity element $e_i\in G_i$, and each $X_i$ is a left $G_i$--space;
\item $* = *_0\in X_0$ is a non-degenerate basepoint;
\item $\oplus = (\oplus^{\textrm{alg}}, \oplus^{\textrm{top}})$;
\item $\oplus^{\textrm{alg}}$ is an associative collection of homomorphisms
$$\oplus^{\textrm{alg}}_{i,j} \co G_i\cross G_j \to G_{i + j}, \,\,\, i,j\in I;$$
\item $\oplus^{\textrm{top}}$ is an associative collection of $\oplus^{\textrm{alg}}_{i,j}$--equivariant maps
$$\oplus^{\textrm{top}}_{i,j} \co X_i\cross X_j \to X_{i + j}, \,\,\, i,j\in I,$$
where equivariance refers to the product action of 
$G_i\cross G_j$ on $X_i\cross X_j$ (and the action of $G_{i+j}$ on $X_{i+j}$);
\item  For each $i, j\in I$, we have $C_{i,j}  \in G_{i+j}$.
\end{itemize}

From here on, we will usually simplify notation by writing $\oplus$ in place of $\oplus^{\textrm{alg}}_{i,j}$ or $\oplus^{\textrm{top}}_{i,j}$.

The elements $C_{i,j}$ are subject to the following further axioms for all $i, j, k\in I$.

\begin{itemize}

\item  $C_{i,j} \cdot (x_i \oplus x_j) = x_j \oplus x_i$ for each $x_i \in X_i$, $x_j\in X_j$

\item  $C_{i, 0} = C_{0,i} = e_i$;

\item  $C_{i,j}  C_{j,i}= e_{i+j}$;

\item  If $g_i\in G_i$ and $g_j\in G_j$, then 
$C_{i, j} (g_i \oplus g_j)  = (g_j \oplus g_i)  C_{i,j}$;

\item $(C_{i, k} \oplus e_j ) (e_i \oplus C_{j, k}) = C_{i+j, k}$.

\end{itemize}

Note that it is not necessary to assume $*_0$ is fixed by the action of $G_0$.

We refer to the operations $\oplus$ as the \e{monoidal}, or \e{additive}, structure of $\A$, and we refer to the elements $C_{i,j}$ as the \e{commutativity operators}.

It will be convenient to use the notation $X = \coprod_i X_i$, $G = \coprod_i G_i$, as well as to set $C = \{C_{i,j}\}$.
Then we can write an action sequence in the simplified notation
$\A = (I, G, X, *, \oplus, C)$.

A morphism of permutative action sequences 
$$\A = (I, G, X, *, \oplus, C) \maps 
\B =   (J, H, Y, *,  \oplus, D)$$ 
consists of a homomorphism of monoids $f\co I\to J$ together with group homomorphisms
$$\phi_i \co G_i \to H_{f(i)}$$
satisfying $\phi_{i+j} (C_{i,j}) = D_{f(i), f(j)}$,  
 and $\phi_i$--equivariant maps
$$\zeta_i\co X_i \to Y_{f(i)}$$
for each $i\in I$, such that
$\zeta_0 (*) = *$.
This defines the category $\PAct$ of permutative (left) action sequences. 

\end{definition}

We will always work with left actions, and we drop the adjective \e{left}.

\begin{construction}\label{act-cat} There is a functor
$$\T\co \PAct \maps \PCat,$$
defined as follows.  

Given an action sequence $\A = (I, G, X, *, \oplus, C)$,
the object space of $\T (\A)$ is simply 
$X =  \coprod_{i\in I} X_i$,
while the morphism space is
$\coprod_{i\in I} G_i \cross X_i$.
The domain of $(g, x)$ is $x$, the codomain is $g\cdot x$, and composition is given by
$$(h, g\cdot x) \circ (g, x) = (hg, x).$$
The operations $\oplus$ give rise to a continuous functor
$$\oplus \co \T(\A) \cross \T(\A) \maps \T(A),$$
which is (strictly) associative and has the object $*_0\in X_0$ as (strict) unit.  The commutativity isomorphisms are given by $(C_{i,j}, x_i \oplus x_j)$, and our axioms on the $C_{i,j}$ are exactly what is needed to make the coherence diagrams in $\T(\A)$ commute.
\end{construction}

We call $\T(\A)$ as the \e{translation category} of $\A$.
Note that $\T(\A)$ is in fact a groupoid.

\begin{example}\label{taut-add} The tautological (additive) unitary  permutative  action sequence is given by setting $I = \bbN$, with ordinary addition as the monoid operation, and setting $X_n = G_n = \U(n)$ for $n \in \bbN$.  We define $\U(0) = \{0\}$, the trivial group.
Here we view $\U(n)$ as a left $\U(n)$--space via \e{conjugation}, and we use the usual matrix block sum operation to define both $\oplus^{\textrm{alg}}$ and  $\oplus^{\textrm{top}}$, with $0\in \U(0)$ acting as the unit element. The commutativity operators are the (unitary) permutation matrices
\begin{equation}\label{Imn}I_{m,n} = \br{\begin{array}{cc} 0_{nm}  & I_n     \\
					 I_m  &  0_{mn}     \\     
	 \end{array}},\end{equation}
where $0_{pq}$ denotes the $p\cross q$ zero matrix.	 
	 The tautological additive general linear action sequence is defined similarly, by replacing $\U(n)$ by $\GL (n)$.
\end{example}

 \begin{definition} An (additive) \e{unitary  permutative action sequence} is one in which the underlying monoid is $\bbN$, with its usual addition, and we have $G_n = \U(n)$ and $C_{m,n} = I_{m,n}$   for all $m,n\in \bbN$.  
 Note that such sequences are completely determined by their topological data, that is,  the $\U(n)$--spaces $X_n$ (and the basepoint $x_0\in X_0$) together with the maps $\oplus^{\textrm{top}}$.
 \end{definition}

\begin{remark}\label{dep} The notion of a permutative action sequence can be generalized by allowing the elements $C_{i,j}$ to depend on $x_i\in X_i$ and $x_j\in X_j$ rather than just on $i,j\in I$, and a small modification again gives a functor from this larger category of sequences to $\PCat$.  Furthermore, there is no need to assume the $G_i$ are groups; monoids would suffice.\end{remark}

\subsection{The nerve of a permutative action sequence}\label{nerve-sec}

Consider a permutative action sequence $\A = (I, G, X, *, \oplus, C)$.
 The    continuous functor 
$$\oplus \co \T(\A) \cross \T(\A) \maps \T(\A)$$
  makes $|N_\cdot \T(\A)|$ into a topological monoid, since geometric realization commutes with products of simplicial spaces. 
  May's infinite loop space machine~\cite{May-perm} gives a functor $\mathbf{K}$ from $\PCat$ to the category of connective $\Omega$--spectra.  One key feature of this functor is that for each permutative category $\mcC$, the infinite loop space underlying the spectrum $\mathbf{K}(\mcC)$ is naturally weakly equivalent to $\Omega B |N_\cdot \mcC|$. 
  Our next goal is to give an explicit description of the monoid  $|N_\cdot \T(\A)|$.
  
For a space $X$ with a left action of a topological group $G$, the homotopy orbit space, or Borel construction, is the quotient space 
$$X_{hG} = (EG \cross X)/G,$$
where $EG$ is the geometric realization of the category $\overline{G}$, internal to $\Top$, with object space $G$ and morphism space $G\cross G$ (here $(g,h)$ is the unique morphism from $h$ to $g$, and $(g,h) \circ (h, k) = (g, k)$).  
Note that $\overline{G}$ admits a right action of $G$ (by functors), defined via right-multiplication in $G$.  This induces a right action of $G$ on $EG$, and now $G$ acts on $EG\cross X$ via $g\cdot (e, x) = (e\cdot g^{-1}, g\cdot x)$.
Let $BG$ denote the geometric realization of the category $\underline{G}$ (again internal to $\Top$) with a single object $*$ and morphism space $G$, and with composition $g\circ h = gh$.
When $G$ is a Lie group, the natural map $EG\to BG$ (induced by the functor sending a morphism $(g, h)$ in $\overline{G}$ to the morphism $gh^{-1}$ in  $\underline{G}$) is a universal principal $G$--bundle~\cite{Segal-class-SS}, and the natural map $X_{hG}\to BG$ is a fiber bundle with fiber $X$.

Given a   permutative action sequence $\A = (I, G, X, *, \oplus, C)$, we can form
$$\M(\A) \defn \coprod_{i\in I} (X_i)_{hG_i}.$$
The maps $\oplus_{i,j}^{\textrm{alg}}$  induce continuous functors
$\overline{G}_i \cross \overline{G}_j \maps \overline{G}_{i+j}$
and hence continuous maps
\begin{equation}\label{EG}EG_i \cross EG_j \maps EG_{i+j}.\end{equation}  
Since the maps $\oplus_{i,j}^{\textrm{alg}}$ are homomorphisms, the maps (\ref{EG}) are equivariant with respect to $\oplus \co G_i\cross G_j \to G_{i+j}$.
Together with the equivariant maps $\oplus \co X_i \cross X_j \to X_{i+j}$, these maps induce a map
$$\M(\A)\cross\M(\A) \maps \M(\A).$$
It is an exercise to check that this map 
makes $\M(\A)$ into a topological monoid with $[*, *_0]\in (X_0)_{hG_0}\subset \M(\A)$ as unit element, where   $*\in EG$ corresponds to the object in $\overline{G}$ represented by the identity of $G$.

\begin{proposition}\label{nerve} There is a natural homeomorphism of topological monoids
$$\M(\A)\maps |N_\cdot \T(\A)|.$$
\end{proposition}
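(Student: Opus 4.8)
The plan is to exhibit the homeomorphism $\M(\A) \to |N_\bullet \T(\A)|$ at the level of simplicial spaces, rather than trying to compare the two spaces directly. The key observation is that both sides are realizations of simplicial spaces built from the same combinatorial data. On the one hand, each $EG_i$ is by definition $|N_\bullet \overline{G}_i|$, and the bar construction shows $N_q \overline{G}_i = G_i^{q+1}$; hence $X_i \times EG_i = |[q] \mapsto X_i \times G_i^{q+1}|$, and after quotienting by the (free, since it is free on the $EG_i$ factor) diagonal $G_i$-action we get $(X_i)_{hG_i} = |[q] \mapsto (X_i \times G_i^{q+1})/G_i|$. Using the standard identification $(X_i \times G_i^{q+1})/G_i \cong X_i \times G_i^q$ (send $[x, g_0, \ldots, g_q]$ to $(g_0^{-1}x, g_0^{-1}g_1, g_1^{-1}g_2, \ldots)$ or a similar normal-form map), we see $(X_i)_{hG_i}$ is the realization of the simplicial space $[q] \mapsto X_i \times G_i^q$. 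On the other hand, $N_q \T(\A)$ is exactly the space of $q$ composable morphisms in the translation category; since a morphism out of $x$ is a pair $(g, x)$ and composition forgets nothing, $N_q \T(\A) = \coprod_i X_i \times G_i^q$ (an object $x_i$ together with $q$ group elements, recording $x_i \xrightarrow{(g_1,x_i)} g_1 x_i \xrightarrow{(g_2, g_1 x_i)} g_2 g_1 x_i \to \cdots$). So the plan is: first set up these two simplicial spaces explicitly, then write down the levelwise homeomorphism $\coprod_i (X_i \times G_i^{q+1})/G_i \xrightarrow{\cong} \coprod_i X_i \times G_i^q = N_q \T(\A)$, then check it is simplicial (commutes with all faces and degeneracies), and finally check it is a map of monoids and is natural in $\A$.

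More concretely, I would first recall that $EG_i = |N_\bullet \overline{G}_i|$ and that geometric realization commutes with the formation of homotopy orbits in the relevant sense, so that $(X_i)_{hG_i} = |N_\bullet \mcE_i|$ where $\mcE_i$ is the translation-category-of-the-$G_i$-action, i.e. the category with objects $X_i$ and morphisms $G_i \times X_i$ — but this is \emph{literally} the restriction of $\T(\A)$ to the component indexed by $i$. This is the cleanest route: the nerve $N_\bullet \T(\A)$ decomposes as $\coprod_i N_\bullet \mcE_i$ because $\T(\A)$ has no morphisms between different components (morphisms $(g,x)$ with $g \in G_i$ stay within $X_i$), realization commutes with coproducts, and $|N_\bullet \mcE_i|$ is by one of the standard models precisely the Borel construction $(X_i)_{hG_i}$. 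So in fact Proposition~\ref{nerve} is almost a definitional unwinding once one fixes compatible models for $EG_i$ and for the nerve, and the content is just bookkeeping to see that the two monoid structures agree under this identification.

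The steps, in order: (1) observe $N_\bullet \T(\A) \cong \coprod_i N_\bullet \mcE_i$ as simplicial spaces, where $\mcE_i$ is the translation groupoid of $G_i \curvearrowright X_i$; (2) recall/verify that $|N_\bullet \mcE_i|$ is naturally homeomorphic to $(X_i)_{hG_i}$, using that $EG_i$ has been \emph{defined} in the excerpt as $|N_\bullet \overline{G}_i|$, so that the standard homeomorphism $|N_\bullet(\text{translation groupoid})| \cong (EG \times X)/G$ applies on the nose — this is where one uses that realization commutes with the quotient by the free $G_i$-action and with products of simplicial spaces (cited to Milnor); (3) assemble these over $i$ to get the homeomorphism $\M(\A) = \coprod_i (X_i)_{hG_i} \xrightarrow{\cong} |N_\bullet \T(\A)|$; (4) check it respects the monoid operations: the operation on $\M(\A)$ was built from the maps $EG_i \times EG_j \to EG_{i+j}$ induced by $\oplus^{\mathrm{alg}}$ and from $\oplus^{\mathrm{top}}$ on the $X_i$, while the operation on $|N_\bullet \T(\A)|$ comes from the functor $\oplus \co \T(\A) \times \T(\A) \to \T(\A)$; these agree simplex-by-simplex because on $N_q$ both reduce to the map $(X_i \times G_i^q) \times (X_j \times G_j^q) \to X_{i+j} \times G_{i+j}^q$ given by $\oplus^{\mathrm{top}}$ on the first factors and $\oplus^{\mathrm{alg}}$ coordinatewise on the group factors, and realization commutes with products; (5) note naturality in $\A$, which is immediate since every construction used is functorial. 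I expect the main obstacle to be purely notational: pinning down the normal-form homeomorphism $(X_i \times G_i^{q+1})/G_i \cong X_i \times G_i^q$ with the \emph{right} sign/indexing conventions so that it is genuinely simplicial with the face and degeneracy maps of the bar construction as used in the excerpt (and so that it is compatible with the conventions on $EG$ and on $\overline{G}$ given in Section~\ref{nerve-sec}); once conventions are fixed, each compatibility is a routine check, which is presumably why the paper describes the proof as ``an exercise.''
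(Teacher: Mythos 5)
Your proposal is correct and follows essentially the same route as the paper, which also proves the statement by viewing each side as the geometric realization of a simplicial space and exhibiting a level-wise homeomorphism (the paper defers the details to \cite[Proposition 2.4]{Ramras-excision}, noting as you do that continuity of the level-wise inverses follows from writing them down explicitly via the normal-form maps). The remaining work in both versions is the same bookkeeping: checking the normal-form identification is simplicial and compatible with the monoid structures induced by $\oplus$.
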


\begin{proof} (Sketch) A special case of this statement is proven in \cite[Proposition 2.4]{Ramras-excision} using an argument due to Tyler Lawson.  That argument immediately generalizes to produce the desired map and to show that it is a continuous bijection.  The argument proceeds by viewing each side as the geometric realization of a simplicial space, and providing a map of simplicial spaces that is a homeomorphism on each level.
In~\cite{Ramras-excision}, an appeal to compactness was made to deduce continuity of the inverse maps on each level, but it is in fact a simple matter to write down  explicit formulas for these inverse maps, from which it is clear that the inverses are continuous.
\end{proof}

In~\cite{Ramras-gp-comp}, the notion of strongly anchored elements in a  (homotopy commutative) monoid $M$ was introduced, in order to give a concrete description of the homotopy groups $\pi_* \Omega BM$ of the homotopy group completion. We say that  $x\in M$ is strongly anchored if  there exists a homotopy between the maps 
$$(m,n) \goesto mn \textrm{\,\,\, and \,\,\,} (m,n)\goesto nm$$
 that is constant on $(x,x)$. 
We write $[X,Y]$ for the set of unbased homotopy classes of maps from $X$ to $Y$; if $Y = M$ is a topological monoid, this set acquires a monoid structure. (We will also write $[f]\in [X,Y]$ for the class of $f\co X\to Y$.)

\begin{theorem}[{\cite[Theorem 1.1, Proposition 2.8]{Ramras-gp-comp}}] $\label{htpy-gps}$ Let $M$ be a 
 topological monoid such that each $x\in M$ is strongly anchored.  Then for each $k \geq 0$, there is a natural isomorphism
\begin{equation} \label{Gamma-intro}  
\dfrac{\Gr[S^k, M]}{\Gr (\nu_k M)} \srm{\isom} \pi_k (\Omega BM),\end{equation}
where $\Gr$ denotes the Grothendieck group and $\nu_k M \leqs[S^k, M]$ is the  submonoid of nullhomotopic maps $S^k\to M$. 

In fact, for $k > 0$, the natural map 
$$\pi\co [S^k, M]\to  \dfrac{\Gr[S^k, M]}{\Gr (\nu_k M)}$$
is surjective, and $\pi( [\phi]) = 0$ if and only if there exists a constant map $c\co S^k\to M$ such that $\phi \bullet c$ is nullhomotopic $($where $\bullet$ denotes pointwise multiplication in $M$$)$.
\end{theorem}

Note  that for each $k\geqs 0$, there is a natural isomorphism between $\nu_k M$ and the monoid of path components $\pi_0 M$. 

One aspect of Theorem~\ref{htpy-gps} is that it describes the product in the group $\pi_k (\Omega BM)$ via the operation in the underlying monoid $M$. As one might guess, the proof of Theorem~\ref{htpy-gps} relies on a version of the Eckmann--Hilton Argument. 
Results of this type have a long history. Possibly the earliest such fact is that group operation in the fundamental group of a topological group, or more generally an $H$--space, can be defined either by loop concatenation or by point-wise multiplication. Another example is the fact that the group operation in the algebraic $K$--theory of a ring, defined for instance in terms of the homotopy groups of Quillen's $Q$--construction, is induced by the direct sum operation in the underlying category of finitely generated projective modules~\cite[p. 20]{Quillen}.

It will be convenient to define 
\[\rh_k M:= \dfrac{\Gr[S^k, M]}{\Gr (\nu_k M)}.\]
The above result tells us, in particular, that $\rh_k M$ is the quotient of $[S^k, M]$ by $\nu_k M$ in the category of abelian monoids. In particular, each element of $[S^k, M]$ is invertible modulo constant maps.

If $\A$ is a unitary or general linear permutative action sequence, then the argument in~\cite[Proof of Corollary 4.4]{Ramras-excision} shows that every element in the monoid $\M(\A)$ is $($strongly$)$ anchored. 

\begin{corollary}\label{unitary-htpy}
 If $\A$ is a unitary or general linear permutative action sequence, then 
 the natural map
$$ 
\rh_k \M(\A)\maps \Omega B \M(\A)$$
is an isomorphism for each $k\geqs 0$.
\end{corollary}

\begin{remark}\label{MA-htpy-comm}The commutativity operators $C_{i,j}$ induce a natural transformation between the functors
$\oplus \co \T(\A)\cross \T(\A) \to \T(A)$ and $\oplus \circ \tau$, where $\tau$ is the twist functor on the product category.  It then follows from basic categorical homotopy theory (\cite[Proposition 2.1]{Segal-class-SS}) that $|N. (\T(\A))| \isom \M(\A)$ is homotopy commutative.
However, this homotopy does not anchor elements.  The argument in~\cite[Proof of Corollary 4.4]{Ramras-excision} involves constructing different homotopies, specific to each element we wish to anchor.
 The main point in the proof is that for each $x\in X_n$, the matrix $I_{n,n}$ lies in the identity component of the stabilizer of $x^2$.  The fact that this requires no extra assumptions on the stabilizer appears to be a rather special feature of the unitary and general linear groups.
\end{remark}

\subsection{Bipermutative action sequences} \label{ring-sec}
We now extend the notion of permutative action sequence to a notion of \e{bipermutative action sequence}, in such a way that the translation category inherits the structure of a bipermutative category.  As discussed in Section~\ref{image-sec}, this gives a relatively efficient way of verifying that the topological Atiyah--Segal map is a map of ring spectra.

In short, a bipermutative action sequence is a pair of permutative action sequences, sharing the same indexing set $I$, the same spaces $X_i$, and the same groups $G_i$.  Additional coherence axioms relating the two permutative structures must hold, and these axioms imply that the two monoid structures on $I$ give it the structure of a rig, or a ``ring without negatives."   We give the details in Definition~\ref{bp-action-seq} below; the axioms are just direct translations of the axioms for bipermutative categories.   Maps of bipermutative action sequences are just maps that respect both permutative structures.

As an example, the Kronecker product of matrices endows the tautological unitary and general linear action sequences 
with a bipermutative structure.  The details are just an elaboration of the discussion in~\cite[VI \S5]{May-577}.  We note that some care must be taken when specifying the exact definition of Kronecker product, so that the coherence axioms hold.

\begin{definition}\label{bp-action-seq}
A bipermutative   action sequence is rig $R$ together with a pair of action sequences
$$((R, +),  G,  X, *_0, \oplus, C) \textrm{ and } ((R, \cdot),  G,  X, *_1, \otimes,  D)$$
sharing the same groups $G_r$ and the same $G_r$--spaces $X_r$ for all $r\in R$.  
Let $0\in R$ and $1\in R$ denote the additive and multiplicative identity elements of $R$, respectively.

These data must satisfy the following additional axioms for all $r, s, t, u\in R$
and all $x, y, z, w \in X$ and all $g, h, k, l \in G$:

\begin{itemize}
\item Zero Axioms:  $*_0 \otimes x  = *_0 = x \otimes *_0$ and $e_0 \otimes g  = e_0 = g  \otimes e_0$ (recall that $e_0 \in G_0$ is the identity element);

\item Right Distributivity Axioms: $(x\oplus y) \otimes z = (x  \otimes z) \oplus (y \otimes z)$ and $(g\oplus h) \otimes k = (g  \otimes k) \oplus (h \otimes k)$;

\item Coherence Axioms:
$C_{r,s} \otimes e_t = C_{r\cdot t, s \cdot t}$, and 
\begin{eqnarray*}(D_{t, r+s} \oplus D_{u, r+s})D_{r+s, t+u} =\hspace{2.75in}\\
\hspace{.5in} (e_{r\cdot t} \oplus C_{r\cdot u, s\cdot t} \oplus e_{s\cdot u}) \left[   \left[   (D_{t, r} \oplus D_{u, r}) D_{r, t+u} \right] \oplus \left[ (D_{t, s} \oplus D_{u, s})  D_{s, t+u} \right] \right].\end{eqnarray*}
\end{itemize}

We will sometimes denote these sequences in the simplified form
$$(R, G, X, *_0, *_1, \oplus, C, \otimes, D).$$

 A morphism of bipermutative action sequences
 $$(R, G, X, *_0, *_1, \oplus, C, \otimes, D) \maps (S, H, Y, *_0, *_1, \oplus, C', \otimes, D')$$
consists of a function $f\co R\to S$ that is a monoid homomorphism for both $+$ and $\cdot$, together with homomorphisms $\phi_r \co G_r\to H_{f(r)}$ and $\phi_r$--equivariant maps $X_r \to Y_{f(r)}$ (preserving both basepoints) for all $r\in R$.  The homomorphisms $\phi_r$ must satisfy $\phi_{s+t} (C_{s, t}) = C'_{f(s+t)}$ and $\phi_{s\cdot t} (D_{s, t}) = D'_{f(s\cdot t)}$.
 \end{definition}
 
 Bipermutative action sequence now form a category  $\BPAct$, and the translation category construction provides a functor from $\BPAct$ to the category of bipermutative categories internal to $\Top$, as defined in~\cite[Chapter VI]{May-577}.

  Multiplicative infinite loop space theory, as developed in~\cite{May-577, May-bipermutative, May-ring-space}, provides a functor taking a bipermutative category $\mcC$ to a (connective) $E_\infty$ ring spectrum $\bbK_\infty (\mcC)$.  Just as in the permutative case,  the underlying infinite loop space $\Omega^\infty \bbK_\infty (\mcC)$ is naturally weakly equivalent to the group completion $\Omega B \mcC$, where $B\mcC$ denotes the geometric realization of the bar construction applied to the nerve of $\mcC$, using its \e{additive} monoidal structure; the key step in the proof of this statement is~\cite[Theorem 9.3]{May-ring-space}.



\section{Deformation $K$--theory}$\label{Kdef-sec}$

Here explain how to view the unitary and general linear deformation $K$--theory spectra associated to finitely generated discrete groups in terms of action sequences.  

\begin{definition}
Given a discrete group $G$, let $\A(G)$ denote the unitary  permutative action sequence associated to the spaces $X_n = \Hom(G, \U(n))$, which we topologize as subsets of the mapping spaces $\Map(G, \U(n))$.  Note that $\Hom(G, \U(0))$ consists of a single point, which will serve as $*$.
We let the unitary groups act on these spaces by conjugation, and the block sum operations are induced by block sum of matrices.  Note that $\A(G)$ is contravariantly functorial in $G$.

The deformation $K$--theory spectrum  $\K (G)$ is the connective $\Omega$--spectrum associated to the permutative translation category $\T (\A(G))$, and we define 
$$\K_* (G) = \pi_* \K(G) \isom \pi_* \Omega^\infty \K (G).$$
\end{definition}

By Proposition~\ref{nerve}, we have a natural homeomorphism
$$\Omega^\infty \K(G) \isom \Omega B\left(\coprod_{n=0}^\infty \Hom(G, \U(n))_{\hU(n)}\right),$$
where the coproduct on the right has the topological monoid structure induced from the additive structure of $\A(G)$.

\begin{remark}\label{comp}
The spectra $\K(G)$ have been computed in a number of cases, and we briefly describe these results. 
When $G$ is the trivial group, $\K(G)$ is simply the connective $K$--theory spectrum $\ku$ (this follows, for instance, from~\cite[VIII \S2]{May-577}).
When $G$ is finite, $\K(G)$ decomposes as a wedge of copies of $\ku$, one for each irreducible complex representation of $G$~\cite[Chapter 6.3]{Lawson-thesis}. 
If $G$ is the fundamental group of a closed surface (or the circle), then $\K(G)$ is a wedge of suspensions of $\ku$, with $\beta_k (G)$ summands of the form $\Susp^k \ku$, where $\beta_k (G)$ is the $k$th Betti number~\cite[Theorem 6.1]{Ramras-stable-moduli}. That theorem also gives a slightly more complicated description for non-orientable surface groups. The behavior of deformation K-theory with respect to free products and direct products is also understood (see~\cite{Ramras-excision} and~\cite{Lawson-prod} respectively), allowing one to compute $\K(G)$ for finitely generated free and free abelian groups.
\end{remark}

Underlying deformation $K$--theory, we have the following topological monoids.

\begin{definition}
Given a discrete group $G$, define
$$\Rep (G) = \Rep(G, \U) \defn \coprod_{n=0}^\infty \Hom(G, \U(n))$$
and
$$\Rep(G)_{\hU} = \coprod_{n=0}^\infty \Hom(G, \U(n))_{\hU(n)}.$$
Block sum of matrices makes $\Rep(G)$ into a topological monoid, with the unique element in 
$\Hom(G,\U(0))$
 as the  identity.  Replacing $\U(n)$ by $\GL(n)$, we obtain the monoids $\Rep(G, \GL)$ and $\Rep(G, \GL)_{\hU}$.
\end{definition}

The main results of~\cite{Ramras-gp-comp} give a description of $\K_* (G)$ in 
terms of spherical families of representations.

\begin{theorem}[Theorem 7.3 in~\cite{Ramras-gp-comp}]$\label{Kdef}$ 
Let $G$ be a discrete group.
For each $m > 0$,
 there is are natural  isomorphisms 
$$ \K_m (G) \isom  \rh_m (\Rep(G)) \oplus K^{-m} (*),$$
$$\K_0 (G) \isom  \rh_0 (\Rep(G)) \isom \Gr (\pi_0 \Rep(G)).$$
Moreover, for $k>0$ the natural map of monoids
$$[S^k , \Rep (G)] \maps \rh_m (\Rep(G))$$
is \e{surjective}, with kernel
$$\{[\rho] \,:\, \exists \textrm{ a constant map } c\co S^k\to \Rep(G) \textrm{ such that } \rho\oplus c  \textrm{ is nullhomotopic} \}.$$
\end{theorem}

If $G$ is finitely generated, the analogous result holds in the general linear case.

It will also be helpful to consider a reduced form of deformation $K$--theory. 
The unitary and general linear cases of this discussion are identical.
 
 For each group $G$, the map $\{1\} \to G$ induces a map of spectra
\begin{equation}\label{q}q\co \K(G) \to \K(\{1\}),\end{equation}
which admits a splitting induced by the map $G\to \{1\}$. 
 As noted in Remark~\ref{comp}, we have $\K(\{1\})\heq \ku$.

\begin{definition}\label{rK}
We define $\rK (G)$ to be the homotopy fiber of the natural map 
$\K(G) \to \K(\{1\})$, and we set $\rK_* (G) = \pi_* \rK(G)$. 
\end{definition}

\begin{corollary}\label{rKdef}
There is a natural splitting
$$\K_m (G) \isom \rK_m (G) \oplus K^{-m} (*)$$
for each $m\geqs 0$, and for $m>0$   there are natural isomorphisms 
\begin{equation}\label{split-m>0} \rK_m (G) \isom \rh_m \Rep(G) \isom \pi_m \Omega B \Rep(G).\end{equation}

Additionally,   $\rK_0 (G)$ is naturally isomorphic to the quotient of 
$$\Gr (\pi_0 \Rep (G)) \isom \pi_0 \Omega B\Rep(G)$$ 
by the subgroup generated by the trivial 1-dimensional representation.
\end{corollary}

\begin{proof} The first statement is immediate from the fact that (\ref{q}) splits.
For $m>0$,~\cite[Proof of Theorem 6.5]{Ramras-gp-comp} shows that the sequence 
\begin{equation}\label{ses'} 0\maps \rh_m   \Rep (G) \srm{i_*}  \rh_m\left(\Rep(G)_{\hU}\right) \srm{q_*}
 \rh_m\left( \Rep(\{1\})_{\hU}\right) \maps 0.
 \end{equation}
 is split exact, which implies (\ref{split-m>0}).

When $m=0$, the splitting of $q\co \K(G) \to \K(\{1\})$ gives a natural isomorphism between $\rK_0 (G)$ and the cokernel of the map
$\K_0 (\{1\}) \to \K_0 (G)$,
whose image is generated by the trivial 1-dimensional representation.
\end{proof}

\section{Topological $K$--theory}$\label{K-top}$

In this section, $X = (X, x_0)$ will denote a (based) paracompact space having the homotopy type of compact Hausdorff space.  
The key case will be $X = B\pi_1 (K)$ with $K$ an aspherical finite CW complex.
We will define a permutative category whose homotopy groups agree with the complex topological $K$--theory of $X$.  Our construction, and the subsequent discussion, is designed to mirror the construction of deformation $K$--theory in the previous section.  This will facilitate our construction and analysis of the topological Atiyah--Segal map in the next section.  
As in the previous section, there is both a unitary and a general linear version of the constructions given here.  We focus on the unitary case; the general linear case is completely analogous (more so than for deformation $K$--theory).

\begin{definition} \label{map-act} As before, let $B\U(n)$ denote the geometric realization of the one-object category $\underline{\U(n)}$. 
Then the (left) conjugation action of $\U(n)$ on itself induces an action, by continuous functors, of $\U(n)$ on the category $\underline{\U(n)}$, and hence an action of $\U(n)$ on $B\U(n)$.  This in turn induces an action of $\U(n)$ on the based mapping space
$\Map_*(X, B\U(n))$.  Throughout this section, we will view $\Map_*(X, B\U(n))$ as a  $\U(n)$--space under this action.

The block sum  operations $\oplus \co \U(m) \cross \U(n) \to \U(m+n)$ are homomorphisms, and hence induce continuous functors $\underline{\U(m)} \cross \underline{\U(n)}\to\underline{\U(m+n)}$, maps 
$$B\U(m) \cross B\U(n) \maps B\U(m+n),$$
and equivariant maps
$$\oplus \co \Map_*(X, B\U(m))\cross \Map_*(X, B\U(n))\maps \Map_*(X, B\U(m+n)).$$
Since block sum is equivariant with respect to conjugation (that is, $(CAC^{-1}) \oplus (DBD^{-1}) = (C\oplus D)(A\oplus B)(C\oplus D)^{-1}$), functoriality implies
that this data gives a unitary permutative action sequence $\A_K (X)$ with $n$th space $\Map_*(X, B\U(n))$.  

Note that when $X = *$, we recover the unitary permutative action sequence whose associated spectrum is $\ku$ (since we are taking homotopy orbits of $BU(n)$ with respect to the conjugation action of $U(n)$).

Let $\mathcal{C}_K (X) = \T( \A_K (X))$ be the  translation category of $\A_K (X)$, and let $\mathcal{K} (X)$ denote the associated  spectrum.  Let $\wt{\mcK} (X)$ denote the homotopy fiber of the natural map $\mcK (X) \to \mcK(*)$.
Finally, set $\mcK_* (X) = \pi_* \mathcal{K} (X)$ and  $\wt{\mcK}_* (X) = \pi_* \wt{\mathcal{K}} (X)$.
\end{definition}

We have the following consequence of Propositions~\ref{nerve} and~\ref{unitary-htpy}.

\begin{corollary}\label{mcK} The geometric realization of the nerve of $\mathcal{C}_K (X)$ is isomorphic, as a topological monoid, to the topological monoid
$$\V(X)_{\hU} \defn \coprod_n \Map_*(X, B\U(n))_{\hU(n)},$$
and for each $m\geqs 0$, there are natural isomorphisms
$$ 
\rh_m \left(\V (X)_{\hU} \right) \srm{\isom} \pi_m \Omega B \left(\V (X)_{\hU} \right) \isom \mcK_m (X).$$
\end{corollary}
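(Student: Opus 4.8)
The plan is to obtain both assertions as formal consequences of Propositions~\ref{nerve} and~\ref{unitary-htpy}, applied to the permutative action sequence $\A_K(X)$ from Definition~\ref{map-act}. First I would recall that $\A_K(X)$ is, by construction, a unitary permutative action sequence whose $n$th $\U(n)$--space is $\Map_*(X, B\U(n))$, with additive structure induced by block sum, and that $\mcC_K(X) = \T(\A_K(X))$. Feeding $\A_K(X)$ into Proposition~\ref{nerve} produces a natural homeomorphism of topological monoids
$$\M(\A_K(X)) \xmaps{\isom} |N_\cdot \T(\A_K(X))| = |N_\cdot \mcC_K(X)|.$$
Since the definition of $\M(-)$ in Section~\ref{nerve-sec} gives $\M(\A_K(X)) = \coprod_n \Map_*(X, B\U(n))_{\hU(n)} = \V(X)_{\hU}$ on the nose (including the monoid operation, which in both cases is induced by the $\oplus$ maps), this is exactly the first assertion.

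For the second assertion, I would first use the fact (recalled in Section~\ref{nerve-sec}) that $\mcK(X) = \mathbf{K}(\mcC_K(X))$ has underlying infinite loop space naturally weakly equivalent to $\Omega B |N_\cdot \mcC_K(X)|$; combining this with the homeomorphism above yields a natural isomorphism $\mcK_m(X) \isom \pi_m \Omega B(\V(X)_{\hU})$ for every $m$. It then remains to invoke Proposition~\ref{unitary-htpy}: since $\A_K(X)$ is a unitary permutative action sequence, every element of $\M(\A_K(X)) \isom \V(X)_{\hU}$ is strongly anchored, so by Theorem~\ref{gen-thm} the natural map $\wt{\Gamma}\co \rh_m(\V(X)_{\hU}) \to \pi_m \Omega B(\V(X)_{\hU})$ is an isomorphism for all $m \geqs 0$. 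Composing with the previous identification gives the displayed isomorphism with $\mcK_m(X)$. (The general linear analogue, if needed, is identical, using the general linear clause of Proposition~\ref{unitary-htpy}.)

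The argument is essentially formal, so there is no real obstacle beyond bookkeeping: one must check that $\A_K(X)$ genuinely satisfies the axioms of Definition~\ref{p-action-seq}, which follows from functoriality of $B$ together with the fact that the block-sum maps $\U(m)\cross\U(n)\to\U(m+n)$ are group homomorphisms (both noted in Definition~\ref{map-act}); and one must verify that the identification $\M(\A_K(X)) = \V(X)_{\hU}$ is literal rather than merely up to homeomorphism, so that the monoid structures, basepoints, and homotopy-orbit decompositions all agree strictly. Naturality in $X$ throughout is inherited from naturality of the maps in Propositions~\ref{nerve} and~\ref{unitary-htpy} and from functoriality of $X \mapsto \A_K(X)$.
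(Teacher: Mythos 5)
Your argument is exactly the paper's: the corollary is stated there as an immediate consequence of Propositions~\ref{nerve} and~\ref{unitary-htpy} applied to the unitary permutative action sequence $\A_K(X)$, with the identification $\mcK_m(X)\isom \pi_m\Omega B(\V(X)_{\hU})$ coming from May's machine as recalled in Section~\ref{nerve-sec}. Your bookkeeping remarks (checking the axioms for $\A_K(X)$ and that $\M(\A_K(X)) = \V(X)_{\hU}$ on the nose) are the only content beyond citing those results, and they are correct.
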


Our goal in this section is to compare the homotopy groups $\mcK_* (X)$ with the (complex) topological $K$--theory of $X$ (for $*\geqs 0$).

 We need to specify a definition of  topological $K$--theory.  Note that the Group Completion Theorem gives a natural homotopy equivalence
 $$\bbZ \cross BU \maps  \Omega B \left(\coprod_n B\U(n)\right),$$
 where $BU = \colim_n B\U(n)$, the colimit being formed with respect to the maps induced by block sum with the identity $I_1 \in \U(1)$. 
 
 For based spaces $Y$ and $Z$, let $\langle Y, Z\rangle$ be the set of based homotopy classes of based maps, and let  $\langle f\rangle$ be the based homotopy class of $f$.
 
 \begin{definition} For $m\geqs 0$, the \e{reduced} topological $K$--theory of $X$ is
$$\wt{K}^{-m} (X) = \wt{K}^0 (S^m \sm X) = \langle S^m \sm X,  \bbZ \cross BU \rangle$$
and the unreduced $K$--theory of $X$ is 
$$K^{-m} (X) = \wt{K}^{-m} (X) \oplus  K^{-m} (*) =  \wt{K}^{-m} (X) \oplus  \pi_m (\ku).$$

Maps $X\to Y$ induce maps on both reduced an unreduced $K$--theory (in the latter case, all maps act as the identity on $K^{-m} (*)$).
\end{definition}

Note that by the Group Completion Theorem, we have a natural zig-zag of homotopy equivalences connecting $\bbZ\cross BU$ and $\Omega B (\coprod_n BU(n))$, so one may replace $\bbZ\cross BU$ by 
$\Omega B (\coprod_n BU(n))$ in the above definition.
 
We will need to consider another topological monoid related to $\mcK (X)$.

\begin{definition} Define
$\V (X) := \coprod_n \Map_* (X, B\U(n))$, 
and equip $\V (X)$ with the monoid structure induced by the block sum operations on $\{B\U(n)\}_n$.  
\end{definition}

We now have the following analogue of the results from Section~\ref{Kdef-sec}.

 \begin{corollary}\label{rK-cor}
Let $(X, x_0)$ be a based, path-connected CW complex  homotopy equivalent to a finite CW complex.
For each $m\geqs 0$, there is a natural splitting
\begin{equation}\label{K-split} \mcK_m (X) \isom \wt{\mcK}_m (X) \oplus \mcK_m (*) = \wt{\mcK}_m (X) \oplus \pi_m \ku\end{equation}
and a natural
isomorphism
$$\wt{\mcK}_m (X) \isom \wt{K}^{-m} (X).$$

Moreover,  for  $m> 0$ there are natural isomorphisms 
$$ \wt{\mcK}_m (X) \isom \rh_m \V(X) \isom \pi_m \Omega B \V(X),$$
while for $m=0$, $\wt{\mcK}_0 (X) \isom \wt{K}^0 (X)$ is naturally isomorphic to the quotient of 
$$\Gr (\pi_0  \V(X)) \isom \pi_0 \Omega B\V(X)$$ 
by the subgroup generated by the class of nullhomotopic maps  $X\to B\U(1)$.
\end{corollary}

\begin{proof} 
The splitting (\ref{K-split}) is immediate since the inclusion $\{x_0\} \injects X$ splits.
Next, consider the sequence of groups
\begin{equation}\label{split-seq}\rh_m (\V (X)) \maps \rh_m (\V (X)_{hU}) = \mcK_m (X) \maps \rh_m \left(\coprod_n (BU(n))\right) = \mcK_m (*)
\end{equation}
induced by the maps
\begin{equation}\label{split-seq2}\Map_*(X, BU(n)) \maps \Map_*(X, BU(n))_{hU(n)} \maps BU(n).
\end{equation}
Using the fact that the sequences (\ref{split-seq2}) are (split) fibrations, one can check directly that 
 (\ref{split-seq}) is a (split) short exact sequence for each $m>0$. By naturality, it follows that
 $\rh_m (\V (X)) \isom \wt{\mcK}_m (X)$ for $m>0$. 
 We will return to the analogous description of $\wt{\mcK}_0 (X)$ at the end of the proof.

Next we show that for $m>0$, there is an isomorphism
$$\wt{K}^{-m} (X) = \langle S^m \wedge X, \bbZ \cross BU \rangle \isom \langle S^m, \Map_*(X, BU)\rangle 
\stackrel[\isom]{\Phi}{\longrightarrow} \rh_m \V (X).$$
To define the map $\Phi$, note that our hypotheses on $X$ guarantee that every map $f\co S^m\sm X\to BU$ factors, up to homotopy, through $BU(n)$ for some $n$, yielding a well-defined element of $ \rh_m \V (X)$.
To see that $\Phi$ is surjective, note that our hypotheses guarantee that every vector bundle over $X$ is a direct summand of a trivial bundle, and hence every class in $ \rh_m \V (X)$ is represented by a map 
$$S^m \to \Map_* (X, BU(n))$$
 whose value the basepoint in $S^m$ is nullhomotopic. Since the basepoint of $S^m$ is non-degenerate, we can in fact obtain a representative lying in $$\Map_* (S^m, \Map_*(X, BU(n))),$$ as desired. To prove  injectivity of $\Phi$, say $\Phi (\langle f \rangle) = 0$ for some 
 $$f\in \Map_* (S^m, \Map_*(X, BU(n))).$$ 
 Note that it since the basepoint of $X$ is non-degenerate, will suffice to show that $f$ is nullhomotopic in the \e{unbased} sense (see~\cite[Section 4A]{Hatcher}, for instance). Since $\Phi (\langle f \rangle) = 0$, there exist $h, g\in \Map_*(X, BU(k))$ (for some $k$) such that
$$[f\oplus c_g] = [c_h],$$ 
where $c_g$ and $c_h$ denote the constant maps out of $S^m$ with values $h$ and $g$. Now, there exists $g'\in \Map_*(X, BU(l))$ such that $g\oplus g'$ is nullhomotopic, so 
we have 
$$[f] = [f\oplus c_g \oplus c_{g'}] = [c_{h+g'}]$$ 
in $[S^m, \Map_* (X, BU)]$, 
and hence $f$ is nullhomotopic as a map into $\Map_* (X, BU)$.

 For $m=0$, connectedness of $U(n)$ yields bijections 
 $$\pi_0 \Map_* (X, BU(n))_{hU(n)} \isom \pi_0 \Map_* (X, BU(n)) \isom  \langle X, BU(n)\rangle
 \isom [X, BU(n)]$$
 so 
 $$\mcK_0 (X) = \pi_0 \V(X)_{hU} \isom \Gr \left( \pi_0 \V(X)_{hU}\right) \isom \Gr \left(  \pi_0 \V(X)\right)  = \Gr [X, \coprod_n BU(n)].$$
Now, 
$$\Gr [X, \coprod_n BU(n)] \isom \colim ([X, \coprod_n BU(n)] \to [X, \coprod_n BU(n)] \to \cdots),$$ 
where the colimit is taken with respect to addition by the basepoint in $BU(1)$.
This colimit is precisely $\bbZ \cross \colim_n [X, BU(n)] \isom \bbZ \cross \langle X, BU\rangle \isom K^0 (X)$.

The splitting (\ref{K-split}) identifies $\wt{\mcK}_0 (X)$ with the cokernel of $\mcK_0 (*) \to \mcK_0 (X)$.
By the discussion above, this agrees with cokernel of  $\Gr \left(  \pi_0 \V(*)\right) \to \Gr \left(  \pi_0 \V(X)\right)$. The claimed description of $\wt{\mcK}_0 (X)$ now follows from the fact that $\V(*)$ is generated by the constant map from $*$ to the basepoint of $BU(1)$.
 \end{proof}

\begin{remark}  Corollary~\ref{rK-cor} requires the assumption that $X$ is path-connected.
For instance, Corollary~\ref{mcK} gives 
$$\mcK_0 (S^0) \isom \Gr\left( \coprod_n \Map_* (S^0, BU(n))_{hU(n)} \right),$$ 
and since $BU(n)$ is simply-connected, each set 
$$\pi_0 \Map_* (S^0, BU(n))_{hU(n)} \isom \pi_0 \Map_* (S^0, BU(n))$$
is a singleton. So $\mcK_0 (S^0) = \bbZ$, whereas $K^0 (S^0) = \bbZ \oplus \bbZ$.
\end{remark}

 
\section{The topological Atiyah--Segal map}$\label{TAS-sec}$

Let $G$ be a discrete group whose classifying space $BG$ has the homotopy type of a finite CW complex.
We now define reduced and unreduced versions of the topological Atiyah--Segal map, which relates the deformation $K$--theory of $G$ to the topological $K$--theory of $BG$.  
The unitary and general linear discussions are completely parallel, so we focus on the unitary case.

The classical    Atiyah--Segal map associates to each representation $\rho\co G\to \U(n)$ the $K$--theory class represented by the vector bundle $E_\rho = (EG\cross \bbC^n)/\pi_1 G  \to BG$, where $\pi_1 G$ acts via $\gamma\cdot (e, v) = (e\cdot \gamma^{-1}, \rho(\gamma) v)$.
We will see that in dimension zero, the classical    Atiyah--Segal map factors as
$$R[G] = \Gr (\Rep(G)^\delta) \maps \Gr(\pi_0 \Rep(G)) \srm{\alpha_0}   K^0 (BG),$$
where $\Rep(G)^\delta$ is the discrete monoid underlying the topological monoid $\Rep(G)$, and $\alpha_0$ is the topological Atiyah--Segal map (in dimension 0).

The simplicial classifying space functor $B$ induces continuous, $\U(n)$--equivariant maps
\begin{equation} \label{B}
\xymatrix@R=2pt{ B = B_n \co \Hom(G, \U(n)) \ar[r] & \Map_* (BG, B\U(n)) \\
 \hspace{.5in}\rho\ar@{|->}[r] & B\rho}
\end{equation}
which combine to give a map between the associated unitary permutative action sequences.  Recall that the   spectra associated to these action sequences are $\K(G)$ and $\mcK (BG)$, respectively, and the homotopy groups of the latter are the complex $K$--theory groups of $BG$.

\begin{definition}\label{TAS-def}
The \e{topological Atiyah--Segal map} 
$$\alpha = \alpha^G \co \K(G) \maps \mcK (BG)$$
is the map of  spectra induced by the above map of permutative action sequences.  The reduced topological Atiyah--Segal map 
$$\wt{\alpha} \co \rK (G) \maps \wt{\mcK} (BG)$$
is the induced map 
$$\hofib \left(\K(G) \to \K(\{1\})\right) \maps \hofib \left( \mcK(BG) \to \mcK(*)\right).$$
Note that we have $\K(\{1\}) = \mcK(*) \heq \ku$, and $\alpha^{\{1\}}$ is the identity map.
\end{definition}

The results in the previous sections combine to give the following descriptions of the  induced maps $\alpha_*$ and $\wt{\alpha}_*$ on homotopy groups.

\begin{corollary} 
For $m\geqs 0$,  the   topological Atiyah--Segal map is naturally isomorphic to the maps 
$$\pi_m \Omega B\Rep(G)_{\hU} \maps \pi_m \Omega B \V (BG)_{\hU}$$
and 
$$\rh_m \Rep(G)_{\hU} \maps \rh_m \Omega B \V (BG)_{\hU}$$
induced by the simplicial classifying space functor $B$, and there is a natural splitting
\begin{equation}\label{alpha-split}\alpha_* = \wt{\alpha}_* \oplus \mathrm{Id}_{\pi_* \ku}.
\end{equation}

Moreover, for $m > 0$,  $\wt{\alpha}_m$ is naturally isomorphic to the maps
$$\pi_m \Omega B\Rep(G)  \maps \pi_m \Omega B \V (BG)$$
and 
$$\rh_m \Rep(G) \maps \rh_m   \V (BG)$$
induced by $B$.
\end{corollary}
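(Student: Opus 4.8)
The plan is to assemble the statement from the identifications established in the preceding sections, tracking $\alpha$ through each of them and invoking naturality at every stage. Recall from Definition~\ref{TAS-def} that $\alpha\co\K(G)\maps\mcK(BG)$ is $\mathbf{K}$ applied to the morphism of permutative translation categories $\T(\A(G))\to\T(\A_K(BG))$ induced by the classifying-space maps $B_n$ of (\ref{B}). Since May's machine is equipped with a weak equivalence $\Omega^\infty\mathbf{K}(\mcC)\heq\Omega B|N_\cdot\mcC|$ that is natural in the permutative category $\mcC$, the map $\alpha_*$ on homotopy groups is naturally identified with $\pi_m$ of $\Omega B$ applied to the induced homomorphism of topological monoids $|N_\cdot\T(\A(G))|\to|N_\cdot\T(\A_K(BG))|$. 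By Proposition~\ref{nerve}, applied naturally to the morphism $\A(G)\to\A_K(BG)$, that homomorphism is the map $\M(\A(G))\to\M(\A_K(BG))$, i.e.\ the map $\Rep(G)_{\hU}\to\V(BG)_{\hU}$ obtained by applying the $B_n$ levelwise on homotopy-orbit spaces. This yields the first asserted description of $\alpha_*$.

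To transport this to the $\rh_m$ picture, recall that Proposition~\ref{unitary-htpy} provides isomorphisms $\wt\Gamma\co\rh_m\M(\A)\srm{\isom}\pi_m\Omega B\M(\A)$ for unitary (and general linear) action sequences, and that $\wt\Gamma$ is natural in the monoid by Proposition~\ref{Gamma-hom}. Applying naturality to $\A(G)\to\A_K(BG)$ produces a commuting square whose vertical arrows are these isomorphisms and whose lower arrow has just been identified with $\alpha_*$; hence $\alpha_*$ is also isomorphic to the map $\rh_m(\Rep(G)_{\hU})\to\rh_m(\V(BG)_{\hU})$ induced by $B$.

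For the splitting, the point is that the maps $q$ used to define $\rK(G)$ and $\wt{\mcK}(BG)$ are induced, respectively, by the inclusion $\{1\}\injects G$ and by the corresponding inclusion $*\injects BG$; naturality of $B$ makes the evident square of spectra commute, with bottom edge the identity of $\K(\{1\})=\mcK(*)=\ku$, and the retractions $G\to\{1\}$, $BG\to *$ splitting these maps are likewise carried to one another by $B$. Hence $\alpha$ sits over $\id_\ku$ in a map of split fiber sequences with fiber $\wt\alpha=\hofib(\alpha)$, which on homotopy groups gives the natural splitting $\alpha_*=\wt{\alpha}_*\oplus\id_{\pi_*\ku}$. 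Finally, for $m>0$ one uses that the fiber inclusions $i\co\Rep(G)\injects\Rep(G)_{\hU}$ and $i\co\V(BG)\injects\V(BG)_{\hU}$ are intertwined by $B$ (since the $B_n$ are $\U(n)$--equivariant), so $\Omega B$ yields a commuting square with horizontal maps induced by $B$ and vertical maps $(\Omega Bi)_*$; by the proof of Theorem~\ref{Kdef} together with Corollaries~\ref{rKdef} and~\ref{rK-cor}, for $m>0$ these vertical maps are injections with images $\rK_m(G)=\ker(q_*)$ and $\wt{\mcK}_m(BG)$ respectively, so $\alpha_*$ restricts along them to $\wt{\alpha}_*$ by the splitting above. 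One more application of the naturality of $\wt\Gamma$ then gives the remaining descriptions of $\wt{\alpha}_*$ in terms of the maps $\pi_m\Omega B\Rep(G)\to\pi_m\Omega B\V(BG)$ and $\rh_m\Rep(G)\to\rh_m\V(BG)$ induced by $B$ (in the general linear case this uses that $G$ is finitely generated, which holds because $BG$ is finite).

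The main obstacle is not any individual step but keeping all of these chains of natural isomorphisms mutually compatible; the delicate point is the first one, i.e.\ justifying that the map coming out of May's (multiplicative) infinite loop space machine really is $\Omega B$ of the induced homomorphism of monoids, or equivalently that the equivalence $\Omega^\infty\mathbf{K}(\mcC)\heq\Omega B|N_\cdot\mcC|$ is natural for the concrete functor at hand. This is exactly why the earlier sections are organized around (bi)permutative action sequences: it makes $\alpha$ manifestly induced by a morphism in $\PCat$, so that $\wt\Gamma$, Proposition~\ref{nerve}, and the fiber-sequence splittings are all available functorially.
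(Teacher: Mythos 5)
Your proposal is correct and follows exactly the route the paper intends: the corollary is stated without proof as the combination of Proposition~\ref{nerve}, the naturality of $\wt{\Gamma}$ (Propositions~\ref{Gamma-hom} and~\ref{unitary-htpy}), and the split exact sequences from Theorem~\ref{Kdef} and Corollaries~\ref{rKdef} and~\ref{rK-cor}, all applied to the morphism of permutative action sequences induced by $B$. Your write-up simply makes that assembly explicit, including the correct identification of the delicate point (naturality of the equivalence $\Omega^\infty\mathbf{K}(\mcC)\heq\Omega B|N_\cdot\mcC|$), so there is nothing to add.
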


Our goal in this section is to give an explicit description of the topological Atiyah--Segal map in terms of vector bundles.
Consider the diagram
\begin{equation}\label{TAS-diag}
\xymatrix{ 		\rh_m  \Rep (G) \ar[r]^{\wt{\alpha}_m} 
				\ar@{-->}[dr] 
			& \rh_m \V(BG)  
			\\
			& \pi_m \Map_* (BG, B\U) \ar[u]_-\isom^-\Psi  
		&\save[] *\txt<8pc>{
 \,\,\,\,\,$\isom \wt{K}^{-m} (BG)$.}
\restore		
		}
\end{equation}		
where  
$\Psi$ is defined by $\Psi (\langle f\rangle) = [f]$.  We will describe the diagonal map $\Psi^{-1} \circ \wt{\alpha}_m$.

By Theorem~\ref{Kdef},  
each class in $\rh_m \Rep(G)\isom \K_m (G)$ has a representative of the form $[\rho]$, with $\rho$ a family of representations $\rho\co S^m \to \Rep (G)$.  
Let
$E_\rho$ be the right principal $\U(n)$--bundle over $S^m \cross BG$
defined by
\begin{equation*}  
\xymatrix@R=2pt{E_\rho = \left(S^m \cross EG \cross \U(n)\right)/G \ar[r] & S^m \cross BG \\
 [z, e, A] \ar@{|->}[r] & (z, q(e)),}
\end{equation*}
where $q\co EG\to BG$ is the bundle projection and $g\in G$ acts via 
$$g \cdot (z, x, A) \defn (z, x\cdot g^{-1}, (\rho (z) (g)) A).$$
Basic properties of this construction are reviewed in~\cite[Section 3]{Baird-Ramras-smoothing}.

We will use $1\in S^0 \subset S^m$ as the basepoint of $S^m$, and for any family 
$$\rho\co S^m\to \Hom(G, \U(n)),$$ 
we let $\wt{\rho(1)} \co S^m\to \Hom(G, \U(n))$ denote the constant family at 
$\rho(1) \co G \to \U(n)$.

For  based CW complexes $X_1$ and $X_2$, the long exact sequence in $K$--theory for the pair $(X_1\cross X_2, X_1 \vee X_2)$ yields a (naturally) split short exact sequence 
$$0\maps \wt{K}^0 (X_1\sm X_2) \srm{\pi^*} \wt{K}^0 (X_1\cross X_2)\srm{i^*} \wt{K}^0 (X_1\vee X_2)\maps 0.$$
If $\rho$ is an $S^m$--family ($m>0$),
the bundle $E_\rho \to S^m \cross BG$ is trivial when restricted to $S^m \cross \{x\}$ (for each point $\{x\}\in BG$): indeed, each point $\wt{x}\in q^{-1} (x) \subset EG$ gives rise to a continuous section $z\goesto [z, \wt{x}, I]$ of the restricted bundle.  Thus we have  
$$E_\rho|_{S^m \vee BG} \isom E_{\wt{\rho(1)}}|_{S^m \vee BG},$$ 
and hence
$$[E_\rho] - [E_{\wt{\rho(1)}}] \in \ker (i^*) = \Img (\pi^*).$$ 
Since $\pi^*$ is injective, it follows that the class $[E_\rho] - [E_{\wt{\rho(1)}}]$ has a well-defined pre-image under  $\pi^*$, which we will denote by
\begin{equation}\label{pi-inv}
(\pi^*)^{-1} ([E_\rho] - [E_{\wt{\rho(1)}}]) \in \wt{K}^0 (S^m\sm BG) = \wt{K}^{-m} (BG).
\end{equation}
By~\cite[Lemma 3.1(2)]{Baird-Ramras-smoothing}, the bundles $E_\rho$ and $E_{\wt{\rho(1)}}$ only depend (up to isomorphism) on the unbased homotopy class of $\rho$.  Hence the class (\ref{pi-inv})  depends only the unbased homotopy class of $\rho$.
Note that 
if $\rho$ is constant, then $E_\rho =   E_{\wt{\rho(1)}}$, so in this case the class (\ref{pi-inv}) is trivial.  

With this understood, we have the following explicit description of  $\wt{\alpha}_*$ (or more precisely, of the map $\Psi^{-1} \circ \wt{\alpha}_*$ in Diagram (\ref{TAS-diag})).

\begin{theorem} \label{TAS} Let $G$ be a group whose classifying space $BG$ is homotopy equivalent to a finite CW complex.  
Then for $m\geqs 1$, the reduced topological Atiyah--Segal map,
viewed as a map
$$\rh_m \Rep (G)  \maps \wt{K}^{-m} (BG)$$
via the diagram $(\ref{TAS-diag})$, has the form
\begin{equation}\label{wtalpha} [\rho] \goesto (\pi^*)^{-1} \left( [E_\rho] - [E_{\wt{\rho(1)}}]\right).\end{equation}

When $m=0$, the map
$$\alpha_0\co \K_0 (G)\isom \Gr (\pi_0 \Rep(G)) \maps  K^0 (BG)$$
is given by $\alpha_0 ([\rho]) = [E_\rho]$.
 \end{theorem}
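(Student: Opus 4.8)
The plan is to unwind the definitions so that everything is phrased in terms of the maps on homotopy orbit monoids, then identify the relevant map with a pullback of the universal bundle. First I would recall from Definition~\ref{TAS-def} and Proposition~\ref{nerve} that $\alpha$ is the map of spectra induced by the map of permutative action sequences $B\co \A(G)\to \A_K(BG)$, and that on the underlying monoids this is exactly the map $\Rep(G)_{\hU}\to \V(BG)_{\hU}$ induced by $B_n\co \Hom(G,\U(n))\to \Map_*(BG,B\U(n))$. So, via the isomorphism $\wt\Gamma$ of Proposition~\ref{unitary-htpy} and the identification $\mcK_m(BG)\isom K^{-m}(BG)$ of Corollary~\ref{rK-cor}, the map $\alpha_m$ is the map $\rh_m\Rep(G)_{\hU}\to \rh_m\V(BG)_{\hU}$ induced by $B$. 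Proposition~\ref{univ} then tells us it suffices to evaluate on a representative of the form $[\rho]$ (for $m\geqs 1$) or, for $m=0$, on the class of a single representation $\rho\co G\to \U(n)$.

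For the $m=0$ statement, the identification $\K_0(G)\isom\Gr(\pi_0\Rep(G))$ is the content of Theorem~\ref{Kdef}, and under the isomorphism $\mcK_0(BG)\isom K^0(BG)$ the map $\alpha_0$ sends the class of a representation $\rho$ to the class of the element of $\pi_0\V(BG)_{\hU}$ represented by $B\rho$. The key point is that the element of $\Map_*(BG,B\U(n))_{\hU(n)}$ determined by $B\rho$ classifies, under the standard dictionary between $[BG,B\U(n)]$ and rank-$n$ complex vector bundles on $BG$, precisely the bundle $E_\rho$. This is essentially a diagram chase: the homotopy orbit construction $\Hom(G,\U(n))_{\hU(n)}\to B\U(n)$ produces, from the point $\rho$, a principal $\U(n)$-bundle over $BG$ whose clutching data is the holonomy $\rho$, and this is the defining recipe for $E_\rho$ given just before the theorem. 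Cross-referencing Baird--Ramras~\cite[Section 3]{Baird-Ramras-smoothing} for the standard properties of this construction should make this precise; I would state it as $\alpha_0([\rho]) = [E_\rho]$ directly.

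For $m\geqs 1$, the diagram~(\ref{TAS-diag}) tells us that $\Psi^{-1}\circ\wt\alpha_m$ is computed as follows: take $[\rho]\in\rh_m\Rep(G)$, push forward to $\rh_m\V(BG)$ by $B$, and then apply the inverse of $\Psi$, i.e., interpret the resulting stable class of maps $S^m\to\V(BG)_{\hU}$ via the isomorphism $\mcK_m(BG)\isom\wt{K}^{-m}(BG)\oplus K^{-m}(*)$ and extract the reduced part. Concretely, the family $\rho\co S^m\to\Hom(G,\U(n))$ composes with $B_n$ to a family $B\rho\co S^m\to\Map_*(BG,B\U(n))$, whose adjoint is a based map $S^m\sm BG_+\to B\U(n)$ — equivalently, a rank-$n$ bundle on $S^m\times BG$ trivialized over $S^m\vee BG$ — and the $m=0$ discussion identifies this bundle with $E_\rho$ (trivialized over $S^m\vee BG$ using the sections $z\mapsto[z,\wt x,I]$ as in the paragraph preceding the theorem). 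The subtlety is the basepoint correction built into $\wt\Gamma$ and $\Gamma$: the class actually computed is the stable class of $\ol{\gamma(B\rho(1))}\boxdot(\gamma\circ B\rho)$, which on the level of bundles corresponds to the \emph{difference} $[E_\rho]-[E_{\wt{\rho(1)}}]$ rather than $[E_\rho]$ itself. Matching this difference with the preimage $(\pi^*)^{-1}([E_\rho]-[E_{\wt{\rho(1)}}])$ under the splitting $\wt K^0(S^m\sm BG)\hookrightarrow\wt K^0(S^m\times BG)$ is exactly the reduction described in~(\ref{pi-inv}), so formula~(\ref{wtalpha}) follows.

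The main obstacle I expect is the bookkeeping in the last step: keeping straight the three different "reductions" in play — the passage from $\Pi_k$ to $\rh_k$ (quotienting by $\Gr(\pi_0)$), the basepoint-correction built into $\Gamma$ (the $\ol{\gamma(f(1))}\boxdot$ factor), and the reduced-$K$-theory splitting via $\pi^*$ — and checking that they are mutually compatible so that the stable homotopy class produced by $\wt\Gamma\circ B$ is carried by $\Psi$ to precisely $(\pi^*)^{-1}([E_\rho]-[E_{\wt{\rho(1)}}])$. Everything here is formal once the identification "element of $\Map_*(BG,B\U(n))_{\hU(n)}$ $\leftrightarrow$ bundle $E_\rho$" is nailed down, so I would spend most of the writeup on that dictionary and then let the basepoint corrections on both sides cancel against each other.
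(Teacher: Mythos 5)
Your proposal is correct and follows essentially the same route as the paper: both arguments reduce everything to the map induced by $B$ on the monoids, invoke the Baird--Ramras identification of the bundle classified by $(B\circ\rho)^\vee\circ\pi$ with $E_\rho$ (and of the constant family with $E_{\wt{\rho(1)}}$), and then absorb the basepoint/constant-map corrections using injectivity of $\pi^*$ and the fact that constants vanish in $\rh_m$. The only difference is one of direction --- the paper starts from a map $f$ classifying $(\pi^*)^{-1}([E_\rho]-[E_{\wt{\rho(1)}}])$ and shows $[f^\vee]=[B\circ\rho]$ in $\rh_m\V(BG)$, which sidesteps the explicit tracking of the $\ol{\gamma(f(1))}\boxdot$ correction that you flag as the main bookkeeping obstacle.
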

 
 Note that the statement for $m=0$ shows that  the classical Atiyah--Segal map  factors through $\alpha_0$, as claimed earlier.
 
 \begin{proof}  We assume  $m>0$; the proof for $m=0$ is similar but simpler.  By definition, $\wt{\alpha}_* ([\rho]) = [B\circ \rho]$, where $B$ is the map (\ref{B}).  
Let $f\co S^m\sm BG\to B\U(n)$ be a map classifying $(\pi^*)^{-1} \left( [E_\rho] - [E_{\wt{\rho(1)}}]\right)$.  To prove the proposition, we need to show that the adjoint map $f^\vee \co S^m \to \Map_*(BG, B\U(n))$ satisfies 
$$\Psi(\la f^\vee \ra) = [B\circ \rho]$$
 in $\rh_m \V (BG)$.  (Recall that
$\Psi (\la f^\vee \ra)$ is simply $[  f^\vee  ]$.)

By choice of $f$, the composite $f\circ \pi$
 classifies $[E_\rho] - [E_{\wt{\rho(1)}}]$, and
by~\cite[Lemma 4.1]{Baird-Ramras-smoothing},
 if $c$ is the constant map $S^m\to \Map_* (BG, B\U(p))$ with image $B(\rho(1))$, 
 then $c^\vee \circ \pi$
 classifies $[E_{\wt{\rho(1)}}]$ (here $c^\vee$ is the adjoint of $c$),
 while  $(B\circ \rho)^\vee \circ \pi$ classifies  $[E_\rho]$.  Hence the maps
 $$(f\circ \pi) \oplus ( c^\vee \circ \pi  ) = (f\oplus c^\vee) \circ \pi \textrm{ \,\,\, and \,\,\,} (B\circ \rho)^\vee \circ \pi$$ 
 represent the same class in $\wt{K}^0 (S^m\sm BG)$.  
Since $\pi^*$ is injective, 
it follows that $f\oplus c^\vee$ and $(B\circ \rho)^\vee $ are homotopic as maps $S^m\sm BG\to B\U(N)$ (for sufficiently large $N$), and consequently $f^\vee \oplus c$ and $B\circ \rho$ are homotopic as maps $S^m\to \Map_*(BG, B\U(N))$.  Since $c$ is constant, it follows that $[B\circ\rho] = [f^\vee] =  \Psi (\la f^\vee \ra)$ in $\rh_m \V (BG)$. 
\end{proof}

One may replace $\U(n)$ by $\GL (n)$ throughout the preceding discussion, yielding a general linear    version $\alpha^{\GL}$ of the topological Atiyah--Segal map.  We note that the unitary   topological Atiyah--Segal map factors through this general linear version, which leads to the following natural question.

\begin{question} Does there exist a group $G$ for which the image of $\alpha^{\GL}_*$ is strictly larger than the image of $\alpha_*$?
\end{question}

\section{Relations with previous work}\label{prev-sec}

In this section, we reinterpret some of the main results from~\cite{Baird-Ramras-smoothing},~\cite{RWY}, and~\cite{Ramras-stable-moduli} in terms of the topological Atiyah--Segal map.

\subsection{Bounds on the image of $\alpha_*$}

We now show that $\wt{\alpha}_*$ fails to be surjective in dimensions $\qcd (G) - 2k$ ($k>0$), where $\qcd (G)$ is the largest number $n$ for which $H^n (G; \bbQ)$ is non-zero (Theorem~\ref{BR}).
This low-dimensional failure is closely analogous to the low-dimensional failure of the Quillen--Lichtenbaum conjectures (in the form discussed in~\cite{OR}, for instance), which the relate algebraic $K$--theory of a field $k$ to its \'{e}tale K--theory in dimensions greater than the virtual cohomological dimension of $k$ minus 2.

\begin{theorem} $\label{BR}$ The image of
$\alpha_* \co \K_m (G) \to K^{-m} (BG)$
$($and, in fact, of $\alpha^{\GL}_*$$)$ 
has rank at most $\beta_m(G) + \beta_{m-2} (G) + \cdots$, where $\beta_i (G)$ is the rank of $H_i (BG; \bbZ)$.
Hence if $\beta_d (G)\neq 0$, then the maps $\alpha_{d-2}, \alpha_{d-4}, \ldots$ are not surjective. 
\end{theorem}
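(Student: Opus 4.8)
The plan is to reduce the statement to a Chern--character computation, using the explicit formula for $\wt{\alpha}_*$ from Theorem~\ref{TAS} together with the classical fact that a flat bundle has vanishing rational Chern classes. The case $m=0$ is immediate from this last fact: $\alpha_0([\rho]) = [E_\rho]$ with $E_\rho \to BG$ flat, so $\mathrm{ch}(E_\rho)$ is concentrated in degree $0$ and $\mathrm{rank}\,\Img(\alpha_0) \leqs \dim_\bbQ H^0(BG;\bbQ) = \beta_0(G) = 1$. So assume $m \geqs 1$. By the splitting $(\ref{alpha-split})$, $\alpha_* = \wt{\alpha}_* \oplus \mathrm{Id}_{\pi_*\ku}$, and since $\mathrm{rank}\,\pi_m\ku \leqs 1 = \beta_0(G)$, it suffices to bound $\mathrm{rank}\,\Img(\wt{\alpha}_m)$ for the reduced map $\wt{\alpha}_m \co \rh_m \Rep(G) \maps \wt{K}^{-m}(BG)$ by $\beta_m(G) + \beta_{m-2}(G) + \cdots$, the series running in steps of $2$ down to $\beta_2(G)$ when $m$ is even (the missing $\beta_0(G)$ being contributed by $\pi_m\ku$) and down to $\beta_1(G)$ when $m$ is odd. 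By Proposition~\ref{univ}, every class in $\rh_m \Rep(G)$ has the form $[\rho]$ for some family $\rho \co S^m \to \Hom(G,\U(n))$, and by Theorem~\ref{TAS} we have $\wt{\alpha}_m([\rho]) = (\pi^*)^{-1}\bigl([E_\rho] - [E_{\wt{\rho(1)}}]\bigr) \in \wt{K}^0(S^m \sm BG) = \wt{K}^{-m}(BG)$, where $\pi \co S^m \cross BG \to S^m \sm BG$ is the collapse map.

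I would then pass to rational cohomology. As $BG$ is a finite complex, the Chern character induces an isomorphism $\wt{K}^{-m}(BG) \otimes \bbQ \isom \bigoplus_{d} \wt{H}^{d}(BG;\bbQ)$, the sum being over those $d \geqs 1$ of the parity of $m$ (these degrees arise from $\wt{H}^{2j}(\Susp^m BG) \isom \wt{H}^{2j-m}(BG)$), and $\dim_\bbQ \wt{H}^d(BG;\bbQ) = \beta_d(G)$; since $\mathrm{ch}$ is rationally injective, it suffices to show that $\mathrm{ch}(\wt{\alpha}_m([\rho]))$ lies in $\bigoplus_{1 \leqs d \leqs m} \wt{H}^d(BG;\bbQ)$ for every $\rho$. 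By naturality of the Chern character and injectivity of $\pi^*$ on rational cohomology (the cofiber sequence $S^m \vee BG \to S^m \cross BG \to S^m \sm BG$ splits rationally, and $\pi^*$ identifies $\wt{H}^*(S^m \sm BG;\bbQ)$ with the K\"{u}nneth summand $u \otimes \wt{H}^*(BG;\bbQ)$, $u$ a generator of $H^m(S^m;\bbQ)$), this is equivalent to showing that $\mathrm{ch}\bigl([E_\rho] - [E_{\wt{\rho(1)}}]\bigr) \in H^*(S^m \cross BG;\bbQ)$ has no term pairing $u$ with a cohomology class of $BG$ of degree $> m$.

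The geometric heart of the matter, which I expect to be the main obstacle, is the vanishing $\mathrm{ch}_j(E_\rho) = 0$ in $H^{2j}(S^m \cross BG;\bbQ)$ for every $j > m$. This is where I would invoke Baird--Ramras~\cite{Baird-Ramras-smoothing}: the bundle $E_\rho$ restricts on each slice $\{z\} \cross BG$ to the flat bundle $E_{\rho(z)}$, hence $E_\rho$ admits a connection whose curvature annihilates any pair of vectors tangent to the $BG$-directions, and a Chern--Weil argument then forces $\mathrm{ch}_j(E_\rho)$ to have $S^m$-form-degree at least $j$, so it vanishes in cohomology once $j > m$. (Indeed, Theorem~\ref{BR} should be viewed as a reinterpretation, via Theorem~\ref{TAS}, of the cohomological estimates of~\cite{Baird-Ramras-smoothing}.) Likewise $E_{\wt{\rho(1)}}$ is pulled back from the flat bundle $E_{\rho(1)} \to BG$, so $\mathrm{ch}_j(E_{\wt{\rho(1)}}) = 0$ for all $j \geqs 1$. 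Thus $\mathrm{ch}\bigl([E_\rho] - [E_{\wt{\rho(1)}}]\bigr) = \sum_{1 \leqs j \leqs m} \mathrm{ch}_j(E_\rho)$, and any term of $\mathrm{ch}_j(E_\rho)$ lying in $u \otimes H^d(BG;\bbQ)$ has $d = 2j - m \leqs m$. This gives the desired bound on $\mathrm{rank}\,\Img(\wt{\alpha}_m)$, and hence on $\mathrm{rank}\,\Img(\alpha_m)$; the general linear case is identical, since flat $\GL_n(\bbC)$-bundles also have vanishing rational Chern classes.

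For the final assertion, suppose $\beta_d(G) \neq 0$ and fix $k \geqs 1$ with $d - 2k \geqs 0$. The rank bound just established gives $\mathrm{rank}\,\Img(\alpha_{d-2k}) \leqs \beta_{d-2k}(G) + \beta_{d-2k-2}(G) + \cdots$, a sum from which $\beta_d(G)$ is absent. On the other hand, $K^{-(d-2k)}(BG)\otimes\bbQ$ contains $\wt{H}^{2(d-k)}(\Susp^{d-2k}BG;\bbQ) \isom \wt{H}^{d}(BG;\bbQ)$, of positive dimension $\beta_d(G)$, as a summand distinct from those accounting for $\beta_{d-2k}(G), \beta_{d-2k-2}(G), \dots$, so $\mathrm{rank}\, K^{-(d-2k)}(BG) \geqs \beta_d(G) + \beta_{d-2k}(G) + \beta_{d-2k-2}(G) + \cdots > \mathrm{rank}\,\Img(\alpha_{d-2k})$. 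Hence $\alpha_{d-2k}$ is not surjective for any such $k$; in particular $\alpha_{d-2}, \alpha_{d-4}, \dots$ fail to be surjective, and the same holds for $\alpha^{\GL}_*$.
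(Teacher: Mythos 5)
Your proposal is correct and follows essentially the same route as the paper: both reduce the bound, via Theorem~\ref{TAS}, to the vanishing of the rational Chern classes $c_j$ (equivalently $\mathrm{ch}_j$) of $E_\rho$ for $j>m$ — the geometric input from \cite{Baird-Ramras-smoothing} — and then count ranks using the fact that the Chern character is a rational isomorphism. The only difference is cosmetic: you carry out the Chern-character rank count explicitly, whereas the paper delegates it to \cite[Lemma 4.2]{Baird-Ramras-smoothing}.
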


\begin{proof}  By Theorem~\ref{TAS} and~\cite[Theorem 3.5]{Baird-Ramras-smoothing}, the image of $\alpha_m$ lies in the subgroup of $K^{-m} (BG)$ on which the Chern classes $c_{m+i}$, $i=1, 2, \cdots$, vanish rationally.  It follows from~\cite[Lemma 4.2]{Baird-Ramras-smoothing} that the rank of this subgroup is given by the above sum.  The last statement follows from the fact that  the Chern character is a rational isomorphism, which implies that the rank of $K^{-m} (BG)$ is equal to the sum of all the Betti numbers of $BG$ in dimensions equivalent to $m$ modulo 2.
\end{proof}

\subsection{Relation with the Novikov conjecture}

A group $G$ satisfies the \e{strong Novikov conjecture} if the analytical assembly map, from the $K$--homology of $BG$ to the $K$--theory of the maximal $C^*$--algebra of $G$,  is injective after tensoring with $\bbQ$.  For background on this conjecture, see~\cite{RWY} and the references therein.

\begin{theorem}$\label{Nov}$ If $G$ is a group such that $BG$ is homotopy equivalent to a finite CW complex, and the unitary topological Atiyah--Segal map $\alpha_m$ is rationally surjective for all sufficiently large $m$, then $G$ satisfies the strong Novikov conjecture.
\end{theorem}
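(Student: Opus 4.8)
The plan is to deduce the theorem from the main result of Ramras--Willett--Yu~\cite{RWY}, using the explicit description of $\wt{\alpha}_*$ provided by Theorem~\ref{TAS} to verify the hypothesis of that result. The first step is to reinterpret rational surjectivity of $\wt{\alpha}_m$ in bundle-theoretic terms. By Theorem~\ref{TAS}, for $m\geqs 1$ the map $\wt{\alpha}_m$ is, under the natural identification $\wt{\mcK}_m(BG)\isom \wt{K}^{-m}(BG) = \wt{K}^0(S^m\sm BG)$, the assignment $[\rho]\mapsto (\pi^*)^{-1}\left([E_\rho]-[E_{\wt{\rho(1)}}]\right)$. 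Applying the Chern character, together with the isomorphism $\wt{K}^0(S^m\sm BG)\otimes\bbQ\isom \bigoplus_{i\geqs 0}\wt{H}^{2i-m}(BG;\bbQ)$, one sees that $\wt{\alpha}_m$ is rationally surjective precisely when the ``transverse'' components of the Chern characters $\mathrm{ch}(E_\rho)$ (the parts lying in $H^m(S^m)\otimes H^*(BG)$), as $\rho$ ranges over all spherical families $S^m\to\Hom(G,\U(n))$, span every $H^j(BG;\bbQ)$ with $j\geqs 1$ of the same parity as $m$. Choosing one $m>\max(M,\dim BG)$ of each parity, the hypothesis of the theorem therefore guarantees that these transverse Chern characters span all of $\wt{H}^{*}(BG;\bbQ)$.

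The second step is the index-theoretic input supplied by~\cite{RWY}. A spherical family $\rho\co S^m\to\Hom(G,\U(n))$ induces a $*$-homomorphism $C^*_{\max}(G)\to C(S^m)\otimes M_n(\bbC)$ and hence a map $t_\rho\co K_*(C^*_{\max}(G))\to \wt{K}_*(S^m)\isom\bbZ$; the higher Mishchenko--Fomenko index theorem, in the family form used in~\cite{RWY}, identifies for each $z\in \wt{K}_*(BG)$ the index pairing $\langle [E_\rho],\, z\times[S^m]\rangle$ with $t_\rho(\mu(z))$, where $\mu\co K_*(BG)\to K_*(C^*_{\max}(G))$ is the analytic assembly map. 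Rationally, this same index pairing is the Kronecker pairing of the transverse Chern character of $E_\rho$ against $\mathrm{ch}(z)\in\bigoplus H_*(BG;\bbQ)$.

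The final step is to combine these. Suppose $G$ fails the strong Novikov conjecture, so $\mu\otimes\bbQ$ is not injective. Since $\mu$ is injective on the $K_*(\pt)$ summand of $K_*(BG)$ (already detected by the trivial representation), there is a nonzero class $z\in \wt{K}_*(BG)\otimes\bbQ$ with $\mu(z)=0$, and hence $t_\rho(\mu(z))=0$ for every spherical family $\rho$. By the index identity, the transverse Chern character of every $E_\rho$ annihilates $\mathrm{ch}(z)$. But $\mathrm{ch}(z)\neq 0$ in $\bigoplus H_*(BG;\bbQ)$, and the Kronecker pairing $\wt{H}^*(BG;\bbQ)\times\bigoplus H_*(BG;\bbQ)\to\bbQ$ is perfect because $BG$ is a finite complex; so some class in $\wt{H}^*(BG;\bbQ)$ pairs nontrivially with $\mathrm{ch}(z)$, and by the first step that class is a transverse Chern character $\mathrm{ch}(E_\rho)$ for some $\rho$ -- a contradiction. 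Therefore $\mu\otimes\bbQ$ is injective, i.e., $G$ satisfies the strong Novikov conjecture.

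I expect the main obstacle to be the precise matching in the second step: formulating the family Mishchenko--Fomenko index theorem so that $\langle [E_\rho], z\times[S^m]\rangle = t_\rho(\mu(z))$ on the nose (this uses that $E_\rho$ is flat in the $BG$-direction with holonomy $\rho$, which is what makes the index factor through $\mu$), and checking that the resulting ``enough representations'' condition is exactly rational surjectivity of $\wt{\alpha}_m$ for all $m>M$ -- this is the content being imported from~\cite{RWY}. The subsidiary facts (injectivity of $\mu$ on $K_*(\pt)$, perfectness of the Kronecker pairing over a finite complex, and the bookkeeping identifying $\wt{K}^{-m}(BG)\otimes\bbQ$ with a sum of cohomology groups) are routine.
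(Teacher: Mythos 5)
Your proposal is correct and follows the same overall strategy as the paper: reduce the statement to the ``flat detectability'' framework of Ramras--Willett--Yu~\cite{RWY}. The paper's own proof is essentially a two-line citation --- surjectivity of $\wt{\alpha}_m$ means every class in $[S^m, \Map_*(BG, B\U(n))]$ (for $n$ large) is of the form $[B\circ\rho]$, whence $G$ is flatly detectable by (the proof of) \cite[Theorem 3.16]{RWY}, and flatly detectable groups satisfy strong Novikov by \cite[Corollary 4.3]{RWY}. What you do differently is open up both black boxes: your Step 1 makes explicit, via the Chern character, how \emph{rational} surjectivity of $\wt{\alpha}_m$ for one $m>M$ of each parity yields transverse Chern characters spanning all of $\wt{H}^*(BG;\bbQ)$ (the paper handles the rational qualifier only by the parenthetical ``or rather from the proof of that result''), and your Steps 2--3 reconstruct the index-theoretic mechanism of \cite[Section 4]{RWY} (the identity $\langle [E_\rho], z\times[S^m]\rangle = t_\rho(\mu(z))$ plus perfectness of the rational Kronecker pairing for a finite complex). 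This buys a more self-contained and arguably more honest argument at the cost of re-proving material that the paper is content to cite; your identification of the family index identity as the genuine imported content, and your care in reducing to reduced $K$--homology so that the $H^0$ contribution of $\mathrm{ch}(E_\rho)|_{\{1\}\times BG}$ drops out, are exactly the points where the details matter.
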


\begin{proof} Surjectivity of $\alpha_m$ is equivalent to surjectivity of $\wt{\alpha}_m$, and 
surjectivity of $\wt{\alpha}_m$ implies that for sufficiently large $n$, every element in 
$[S^m,  \Map_* (BG, B\U(n))]$ has the form $[B\circ \rho]$ for some $\rho$.  It now follows from~\cite[Theorem 3.16]{RWY} (or rather from the proof of that result) that $G$ lies in the class of \e{flatly detectable groups}; all such groups satisfy the strong Novikov conjecture by~\cite[Corollary 4.3]{RWY}.
\end{proof}

\subsection{Surface groups}

We now translate  the results in~\cite{Ramras-stable-moduli} into information about the topological Atiyah--Segal map when $G = \pi_1 (\Sigma)$ is the fundamental group of a compact, aspherical surface $\Sigma$.  Note that we allow $\Sigma$ to have boundary, in which case $\pi_1 (\Sigma)$ is a finitely generated free group.
We begin by recalling
a result from~\cite{Ramras-stable-moduli}, which was proven using Morse theory for the Yang--Mills functional.

\begin{theorem}[\cite{Ramras-stable-moduli}, Theorem 3.4]$\label{B-thm}$ If $\Sigma$ is a compact aspherical surface, possibly with boundary, then for each $M\geqs 0$, there exists $N$ such that for every $n>N$ the natural map 
$$B_* \co \pi_m \Hom(\pi_1 \Sigma, \U(n)) \maps \pi_m \Map_* (B\pi_1 \Sigma, B\U(n))$$
induces an isomorphism on homotopy groups in dimensions $1 \leqs m \leqs M$ for all choices of compatible basepoints.
If $\Sigma$ is non-orientable, or has non-empty boundary, then this statement holds for $0\leqs m\leqs M$.
\end{theorem}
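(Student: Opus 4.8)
The plan is to realize $B_*$, up to standard identifications, as the map induced on homotopy quotients by the inclusion of flat connections into the space of all connections over $\Sigma$, and then to invoke the connectivity estimates coming from Yang--Mills Morse theory.

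I would dispose of the bounded case first. When $\partial\Sigma\neq\emptyset$ the group $\pi_1\Sigma$ is free of some rank $r$, so $B\pi_1\Sigma\simeq\bigvee_r S^1$, and under the identifications $\Hom(\pi_1\Sigma,\U(n))=\U(n)^r$ and $\Map_*(\bigvee_r S^1,B\U(n))=(\Omega B\U(n))^r$ the map $B$ becomes the $r$--fold product of the natural map $\gamma\co\U(n)\to\Omega B\U(n)$ of Section~\ref{Gamma}, applied to the topological group $\U(n)$. Since $\gamma$ is a weak equivalence for any topological group, $B_*$ is an isomorphism in every dimension $m\geqs 0$ and for every $n$ here, with no stabilization needed.

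For closed $\Sigma$, fix the trivial bundle $\Sigma\times\C^n$; let $\mathcal A$ denote its affine --- hence contractible --- space of connections and $\mathcal G_0\cong\Map_*(\Sigma,\U(n))$ the based gauge group, which acts freely on $\mathcal A$. Holonomy identifies $\mathcal A_{\mathrm{flat}}/\mathcal G_0$ with $\Hom(\pi_1\Sigma,\U(n))$ --- after the smoothing of~\cite{Baird-Ramras-smoothing} that matches the gauge-theoretic topology with the subspace topology on the representation variety --- while $\mathcal A/\mathcal G_0\simeq B\mathcal G_0$ is the basepoint component of $\Map_*(\Sigma,B\U(n))$, and under these identifications the map induced by $\mathcal A_{\mathrm{flat}}\hookrightarrow\mathcal A$ is homotopic to $B$. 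As $\mathcal A_{\mathrm{flat}}\hookrightarrow\mathcal A$ is a $\mathcal G_0$--equivariant inclusion of free $\mathcal G_0$--spaces, a five-lemma comparison of the long exact homotopy sequences of the two principal $\mathcal G_0$--bundles shows the induced map of quotients is at least as highly connected as the inclusion of total spaces. So the theorem reduces to the assertion that $\mathcal A_{\mathrm{flat}}\hookrightarrow\mathcal A$ is $c(n)$--connected with $c(n)\to\infty$ as $n\to\infty$: for a degree-zero bundle the flat connections comprise the minimal stratum of the Yang--Mills functional, and the remaining Harder--Narasimhan / Yang--Mills strata have codimension growing linearly in $n$, so the inclusion of the minimal stratum into the contractible space of all connections is highly connected. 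One invokes the Yang--Mills Morse theory of Atiyah--Bott, Daskalopoulos, and R{\aa}de (and, in the non-orientable case, the analogous stratification of Ho and Liu) for an explicit $c(n)$; given $M$, any $N$ with $c(n)>M$ for $n>N$ then works. For $m=0$ there is a genuine discrepancy for closed orientable $\Sigma$ (there $\Hom(\pi_1\Sigma,\U(n))$ is path connected but meets only the $c_1=0$ component of the mapping space), whereas for closed non-orientable $\Sigma$, as for surfaces with boundary, the path components of the two sides are matched by a common $H^2$--invariant, so $m=0$ is included. Finally, $\Hom(\pi_1\Sigma,\U(n))$ is path connected in the unitary case, so the conclusion does not depend on the choice of compatible basepoints.

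The hard part is this last analytic input --- the high connectivity, tending to infinity with $n$, of the inclusion of the minimal Yang--Mills (i.e.\ flat) connections into the full space of connections over $\Sigma$ --- which rests on Uhlenbeck compactness, the Yang--Mills heat flow, and the Harder--Narasimhan stratification; I would treat it as a black box cited to that literature. A lesser technical point is upgrading the holonomy correspondence from a continuous bijection to a homeomorphism, so that it respects homotopy groups, which is the role played by~\cite{Baird-Ramras-smoothing}.
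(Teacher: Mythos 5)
Your outline is correct and follows essentially the same route as the source of this statement: the paper does not prove Theorem~\ref{B-thm} itself but quotes it from~\cite{Ramras-stable-moduli} with the remark that it was proven by Morse theory for the Yang--Mills functional, and that is exactly your argument --- the boundary case via the weak equivalence $\gamma\co \U(n)\to\Omega B\U(n)$ applied factorwise over a wedge of circles, and the closed case by identifying $B$ with the map of $\mathcal{G}_0$--quotients induced by the inclusion of the Yang--Mills minima into the contractible space of connections, whose complementary Harder--Narasimhan strata have codimension growing with $n$ (Atiyah--Bott, Daskalopoulos, R{\aa}de, and Ho--Liu in the non-orientable case), with \cite{Baird-Ramras-smoothing} supplying the comparison of the gauge-theoretic and representation-variety topologies. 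The one slip is your closing claim that $\Hom(\pi_1\Sigma,\U(n))$ is always path connected: for closed non-orientable $\Sigma$ it has two components, indexed by the obstruction class in $H^2(\Sigma;\bbZ)\isom \bbZ/2$, but basepoint-independence still holds because the stratification argument is carried out separately on the space of connections of each topological type of bundle.
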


\begin{theorem} $\label{TAS-2}$ If $\Sigma$ is a compact aspherical surface, possibly with boundary, then the topological Atiyah--Segal map
$$\alpha_m \co \K_m (\pi_1 \Sigma) \maps K^{-m} (\Sigma)$$
is an isomorphism for $m\geqs 1$.  If $\Sigma$ is non-orientable, or  has non-empty boundary, then $\alpha_0$ is an isomorphism as well.
\end{theorem}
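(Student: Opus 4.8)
The plan is to reduce the statement, by formal manipulations, to Theorem~\ref{B-thm}. First, the splitting (\ref{alpha-split}) together with the decompositions $\K_m(\pi_1\Sigma)\isom\rK_m(\pi_1\Sigma)\oplus K^{-m}(*)$ and $K^{-m}(\Sigma)\isom\wt{K}^{-m}(\Sigma)\oplus K^{-m}(*)$ reduces the claim to showing that the reduced topological Atiyah--Segal map $\wt{\alpha}_m\co\rK_m(\pi_1\Sigma)\to\wt{K}^{-m}(\Sigma)$ is an isomorphism. Since $\Sigma\heq B\pi_1\Sigma$, Corollaries~\ref{rKdef} and~\ref{rK-cor}, together with the description of $\wt{\alpha}_*$ in Section~\ref{TAS-sec}, identify $\wt{\alpha}_m$ for $m\geqs 1$ with the map $B_*\co\rh_m\Rep(\pi_1\Sigma)\to\rh_m\V(B\pi_1\Sigma)$ induced by the classifying-space map (\ref{B}), and for $m=0$ with the map induced by (\ref{B}) on the quotients of $\Gr(\pi_0\Rep(\pi_1\Sigma))$ and $\Gr(\pi_0\V(B\pi_1\Sigma))$ by the classes of the trivial rank-one representation and the trivial line bundle, respectively.

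Next I would invoke Proposition~\ref{univ} — whose hypotheses hold for $\Rep(\pi_1\Sigma)$ by Lemma~\ref{sa-Hom}, and for $\V(B\pi_1\Sigma)$ by the discussion following Corollary~\ref{rK-cor} — to present $\rh_m\Rep(\pi_1\Sigma)$ and $\rh_m\V(B\pi_1\Sigma)$ as the quotients of the discrete monoids $[S^m,\Rep(\pi_1\Sigma)]$ and $[S^m,\V(B\pi_1\Sigma)]$ by their submonoids of stably nullhomotopic maps. Since $S^m$ is connected, every class is represented by a map into a single $\Hom(\pi_1\Sigma,\U(n))$ (resp.\ $\Map_*(B\pi_1\Sigma,B\U(n))$), and since $\Sigma$ is a finite complex, every vector bundle over $\Sigma$ is a summand of a trivial bundle. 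The crucial bookkeeping step is then: given any such representative, block-summing with a suitable constant family — which changes neither $\rh_m$-class, as constants are nullhomotopic — allows us to arrange simultaneously that the value at the basepoint $1\in S^m$ is the trivial representation $I_n$ (resp.\ the constant map $c_*$), that these basepoints are $B$-compatible, and that $n$ exceeds the stabilization bound $N$ supplied by Theorem~\ref{B-thm} applied with $M=m$.

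With this in hand, for $m\geqs 1$ surjectivity of $\wt{\alpha}_m$ follows from surjectivity of $B_*\co\pi_m(\Hom(\pi_1\Sigma,\U(n)),I_n)\to\pi_m(\Map_*(B\pi_1\Sigma,B\U(n)),c_*)$, while injectivity follows because if $B\circ\rho$ is stably nullhomotopic then, after the stabilization above, $B\circ\rho$ is freely nullhomotopic, hence has trivial based homotopy class at $\rho(1)$ (transported along the nullhomotopy), so the injectivity clause of Theorem~\ref{B-thm}, valid for all compatible basepoints, forces $\rho$ to be nullhomotopic and therefore $[\rho]=0$ in $\rh_m\Rep(\pi_1\Sigma)$. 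For $m=0$, when $\Sigma$ is non-orientable or has non-empty boundary, Theorem~\ref{B-thm} additionally provides that $B$ induces a bijection on $\pi_0$ once $n$ is large, and the same bookkeeping — now carried out at the level of $\Gr(\pi_0-)$, using that every class of $\wt{K}^0(\Sigma)$ is $[E]$ for a single bundle $E$ of arbitrarily large rank — shows that the induced map on the quotients by the trivial-rank-one classes is a bijection, i.e.\ that $\wt{\alpha}_0$ is an isomorphism.

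The genuine content — that the classifying-space map stabilizes to an isomorphism on homotopy groups in each fixed degree — is entirely supplied by Theorem~\ref{B-thm} and its Yang--Mills Morse-theoretic proof in~\cite{Ramras-stable-moduli}; what remains here is formal. Accordingly, I expect the only real obstacle to be organizing the stabilization bookkeeping carefully, so that basepoints stay $B$-compatible at every step and so that the passage among $\rh_m$, the monoids $[S^m,-]$, and ordinary based homotopy groups is handled correctly, together with the slightly fussier but elementary treatment of the case $m=0$.
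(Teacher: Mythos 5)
Your proposal is correct and follows essentially the same route as the paper: reduce to $\wt{\alpha}_m$, use Proposition~\ref{univ} together with the fact that every bundle over the finite complex $\Sigma$ is a summand of a trivial one (so one may replace the stabilizing constants by trivial representations $I_p$), and then quote the injectivity/surjectivity of Theorem~\ref{B-thm}, whose ``all compatible basepoints'' clause is exactly what makes the injectivity direction go through. The one quibble is that block-summing with constant families cannot literally normalize the basepoint value $\rho(1)$ to the trivial representation (it only replaces $\rho(1)$ by $\rho(1)\oplus\sigma$), but this step is superfluous since, as you note, Theorem~\ref{B-thm} holds at every compatible choice of basepoints.
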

\begin{proof}  Note that it suffices to prove that $\wt{\alpha}_m$ is an isomorphism.
 We work in the case  $m>0$; the same reasoning will apply when $m=0$ and $\Sigma$ is non-orientable or has non-empty boundary, using the last part of Theorem~\ref{B-thm}.

We prove injectivity; the proof of surjectivity is similar but simpler. 
By Theorem~\ref{Kdef}, each element in the kernel of $\wt{\alpha}_m$ has the form
$[\rho]$ for some $$\rho\co S^m \to \Hom(\pi_1 \Sigma, \U(n))$$ satisfying 
 \begin{equation}\label{inj}(B\circ \rho) \oplus d \heq c \end{equation}
 for some constant maps $c, d\co S^m \to \V (B\pi_1 \Sigma)$.  
 In fact, by~\cite[Corollaries 4.11 and 4.12]{Ramras-surface} we may assume without loss of generality that $d = BI_p$ for some $p$, where $I_p$ denotes the constant map $S^m \to \Hom(\pi_1 \Sigma, \U(p))$ with image the trivial representation. Indeed, for orientable surfaces the homomorphism spaces are all path connected, while in the non-orientable case each homomorphism space has two path components, and adding representations from the non-trivial components always yields a representation in the trivial component, so (for instance) we may simply replace $d$ by $d\oplus d$ if necessary.
 
Equation (\ref{inj}) now implies that  $B\circ (\rho \oplus I_p)$ is nullhomotopic, and the injectivity portion of  Theorem~\ref{B-thm}  implies that $\rho\oplus I_p$ must be nullhomotopic. Hence $[\rho] = 0$ in $\rh_m (\Rep(\pi_1 \Sigma))$, as desired.
\end{proof}

\section{Families of flat connections over the Heisenberg manifold}\label{H-sec}

In this section we study flat, unitary connections on   complex vector bundles  over the 3--dimensional Heisenberg manifold by combining the main results of this paper with computations due to Tyler Lawson.
We begin with a review of the definition and  the basic properties of this manifold.

\subsection{Background.}

The discrete 3--dimensional Heisenberg group $H$ is the group of $3\cross 3$ upper triangular integer matrices  under ordinary matrix multiplication.  This group sits as a (discrete) subgroup of the real Heisenberg group $H_\bbR$, which consists of all real upper triangular matrices.  We will identify the real Heisenberg group with $\bbR^3$ via the function
$$\br{\begin{array}{rrrr}  1 & x  & z     \\
					  0 &  1& y     \\
					   0& 0  &1       
	 \end{array}}  \goesto (x,y,z)$$
(note, though, that we are using matrix multiplication to define the operation in $H_\bbR$, not addition in the vector space $\bbR^3$).  The Heisenberg manifold is defined by 
$$N^3 = \bbR^3/H,$$
where $H$ acts on $\bbR^3 \isom H_\bbR$ by (left) multiplication.

It is an elementary exercise to check that $N^3$ is Hausdorff, and that the map $\bbR^3\to N^3$ is a covering.  In particular, this means $N^3$ is an aspherical 3-dimensional manifold with fundamental group $H$ (and hence $N^3\heq BH$), and $N^3$ is orientable since the action of $H$ on $\bbR^3$ is orientation-preserving.  Moreover, $N^3$ is compact; this follows, for instance, from the fact that each closed unit cube in  $\bbR^3$ surjects onto $N^3$.
In fact, $N^3$ is a circle bundle over $\bbR^2/\bbZ^2$.
Indeed, consider the mapping
$$N^3 = \bbR^3/H \srm{q} \bbR^2/\bbZ^2$$
given by sending $[(x,y,z)]$ to $[(x,y)]$.  It is elementary to check that this map is a fiber bundle with circle fibers; indeed, for each $[(x,y)]\in \bbR^2/\bbZ^2$, there exists $\epsilon>0$ such that the mapping
$$\xymatrix@R=1pt{  [x-\epsilon, x+\epsilon] \cross [y-\epsilon, y+\epsilon] \cross \bbR/\bbZ \ar[r] & N^3\\
(x', y', [z]) \ar@{|->}[r] & [(x', y', z)]}
$$
is a homeomorphism onto 
$$q^{-1} \left(\pi( [x-\epsilon, x+\epsilon] \cross [y-\epsilon, y+\epsilon])\right),$$
where $\pi\co \bbR^2\to \bbR^2/ \bbZ^2$ is the quotient map.

The fibration sequence $S^1\to N^3 \to \bbR^2/\bbZ^2$ gives rise to a short-exact sequence on fundamental groups:
$$1\maps \bbZ\maps H \srm{q_*} \bbZ^2\maps 1.$$
Covering space theory gives canonical identifications of $\pi_1 (\bbR^3/H, [(0,0,0)])$ and $\pi_1 (\bbR^2/\bbZ^2, [(0,0)])$ with $H$ and $\bbZ^2$ (respectively), and under these identifications the  map $q_*$
is simply
$$\br{\begin{array}{rrrr}  1 & a  & c     \\
					  0 &  1& b     \\
					   0& 0  &1       
	 \end{array}}  \goesto (a,b).$$
The  kernel of $q_*$ is generated by 
$$Z = \br{\begin{array}{rrrr}  1 & 0  & 1     \\
					  0 &  1& 0     \\
					   0& 0  &1       
	 \end{array}},$$
which is the commutator of the elements
$$  X = \br{\begin{array}{rrrr}  1 & 1  &0     \\
					  0 &  1&0     \\
					   0& 0  &1       
	 \end{array}} \textrm{ and } Y = \br{\begin{array}{rrrr}  1 & 0  &0     \\
					  0 &  1&1     \\
					   0& 0  &1       
	 \end{array}}.$$
It is elementary to check that $X$ and $Y$ generate $H$, and it follows that $\ker(q_*)$ is precisely the commutator subgroup of $H$, giving
\begin{equation}\label{ab}H_1 (N^3; \bbZ) \isom \bbZ^2.\end{equation}
Since $Z$ commutes with both $X$ and $Y$,  we see that $\ker(q_*)$ is central, and it follows that $H$ is a nilpotent group.
  Poincar\'{e} Duality, together with  (\ref{ab}), yields:
$$H_i (N^3; \bbZ) \isom H^i (N^3; \bbZ) \isom \begin{cases} \bbZ^2 & \textrm{ if } i=1,2,\\
						 \bbZ , &  \textrm{  if }  i=0,3.  
						\end{cases} 
$$						
In particular, the cohomology of $N^3$ is torsion-free.

\subsection{Flat bundles over the Heisenberg manifold}

To understand flat bundles over $N^3$, we will use the following fact.

\begin{proposition}\label{tf} If $X$ is a finite CW complex with torsion-free integral 
cohomology, and $E\to X$ is a complex vector bundle whose Chern classes $c_i (E) \in H^{2i} (X; \bbZ)$  vanish for $i\geqs 1$, then $E$ is stably trivial.

In particular, if $M$ is a smooth  manifold with torsion-free integral 
cohomology and $E\to M$ is a vector bundle admitting a flat connection, then $E$ is stably trivial.
\end{proposition}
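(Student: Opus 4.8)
The plan is to reduce stable triviality of $E$ to the injectivity of the Chern character on $K^0(X)$, and to get that injectivity from the torsion-freeness of $K^0(X)$, which in turn follows from a collapse of the Atiyah--Hirzebruch spectral sequence.

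\textbf{Step 1: $K^0(X)$ is torsion-free.} Consider the Atiyah--Hirzebruch spectral sequence $E_2^{p,q}=H^p(X;K^q(\mathrm{pt}))\Rightarrow K^{p+q}(X)$, which converges since $X$ is a finite complex. Because $K^q(\mathrm{pt})=\bbZ$ for $q$ even and $0$ for $q$ odd, we have $E_2^{p,q}\cong H^p(X;\bbZ)$ for $q$ even and $E_2^{p,q}=0$ for $q$ odd, and the differentials $d_{2r}$ vanish automatically for parity reasons. Rationalization is exact and commutes with the exact couple defining the spectral sequence, so $E_\ast^{\ast,\ast}\otimes\bbQ$ is the Atiyah--Hirzebruch spectral sequence for rational $K$--theory, which collapses at $E_2$ (rationally, $K$--theory splits as a product of shifted Eilenberg--Mac Lane spectra, or equivalently every $d_r$ is a positive-degree operation between $H\bbZ$--type terms and hence rationally trivial). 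Thus every integral differential $d_r$ has torsion image. Now induct on $r$: $E_2$ is torsion-free by hypothesis; if $E_r$ is torsion-free then $d_r$, having torsion image in the torsion-free group $E_r$, vanishes, so $E_{r+1}=E_r$ is again torsion-free. Hence $E_\infty=E_2$, and $K^0(X)$ carries a finite filtration whose associated graded is $\bigoplus_{k\geq 0}H^{2k}(X;\bbZ)$; as a finitely generated iterated extension of torsion-free groups, $K^0(X)$ is torsion-free.

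\textbf{Steps 2--3: Chern character and conclusion.} Since $\mathrm{ch}\colon K^0(X)\to\bigoplus_{k\geq 0}H^{2k}(X;\bbQ)$ becomes an isomorphism after $\otimes\bbQ$, and $K^0(X)$ is torsion-free (so injects into $K^0(X)\otimes\bbQ$), the map $\mathrm{ch}$ is injective. Write $n=\mathrm{rank}(E)$. Each component $\mathrm{ch}_k(E)$ for $k\geq 1$ is a universal polynomial with no constant term in $c_1(E),\dots,c_k(E)$ (for instance $\mathrm{ch}_1=c_1$ and $\mathrm{ch}_2=\frac12(c_1^2-2c_2)$), so the hypothesis $c_i(E)=0$ for $i\geq 1$ gives $\mathrm{ch}_k(E)=0$ for all $k\geq 1$; thus $\mathrm{ch}(E)=n=\mathrm{ch}(\underline{\bbC}^{\,n})$. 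Injectivity of $\mathrm{ch}$ forces $[E]=[\underline{\bbC}^{\,n}]$ in $K^0(X)$, i.e. $E\oplus\underline{\bbC}^{\,N}$ is trivial for some $N$, which is the claim.

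\textbf{The flat case and the main obstacle.} If $E\to M$ admits a flat connection $\nabla$, then by Chern--Weil theory each $c_i(E)$ is represented in de Rham cohomology by a polynomial in the curvature of $\nabla$, which vanishes; hence $c_i(E)$ is torsion in $H^{2i}(M;\bbZ)$. Assuming $M$ is compact --- so it has the homotopy type of a finite CW complex, which covers the application to $N^3$ --- torsion-freeness of $H^\ast(M;\bbZ)$ yields $c_i(E)=0$ for $i\geq 1$, and the first part applies. The only non-formal ingredient is Step 1: the crucial point is that the integral Atiyah--Hirzebruch differentials for $K$--theory are rationally trivial, which then propagates through the torsion-free $E_2$--page; I would also double-check the indexing conventions so that $K^0(X)$ (and not $K^1(X)$) is the group filtered by the even cohomology, since the whole argument runs in $K^0$.
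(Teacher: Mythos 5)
Your proof is correct and follows essentially the same route as the paper: vanishing Chern classes give $\mathrm{ch}([E]-[\underline{\bbC}^n])=0$, torsion-freeness of $K^0(X)$ makes the Chern character injective, and the flat case reduces to the first via Chern--Weil plus injectivity of $H^*(M;\bbZ)\to H^*(M;\bbQ)$. The only difference is that your Step 1 spells out the Atiyah--Hirzebruch collapse argument, which the paper simply cites (to Atiyah--Hirzebruch and to Hendricks); your parenthetical caveat that $M$ should be compact (or at least of finite homotopy type) for the second statement is a fair observation about an implicit hypothesis, satisfied in the paper's application to $N^3$.
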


Note that for finite CW complexes, $H^*(X; \bbZ)$ is torsion-free if and only if $H_*(X;\bbZ)$ is torsion-free.

\begin{proof} The second statement follows from the first, since by Chern--Weil theory the Chern classes of a flat vector bundle over $M$ map to zero in $H^*(M; \bbQ)$, and when $H^*(M; \bbZ)$ is torsion-free the natural map 
$H^*(M; \bbZ)\to H^*(M; \bbQ)$
is injective.
 
To prove the first statement, consider a vector bundle
 $E\to X$   with $c_i (E) = 0$ for $i\geqs 1$.
By~\cite[Section 2.5]{AHSS} (see also~\cite[Proposition 6.10]{Hendricks}) the complex $X$--theory of $X$ is torsion-free (and finitely generated), so the natural map $\wt{K}^*(X) \to \wt{K}^*(X)\otimes \bbQ$ is injective.  Composing with the Chern character gives an injection
$$\wt{K}^*(X) \injects \wt{K}^*(X)\otimes \bbQ \srm{\isom} \wt{H}^*(X; \bbQ).$$
Since $[E]$ maps to zero under this injection, we have $[E]=0$ in $\wt{K}^*(X)$.
\end{proof}

\begin{corollary} \label{triv}
Let $G$ be  a  discrete group whose classifying space $BG$ has torsion-free integral cohomology and has the homotopy type of a CW complex $X$ of dimension at most $d$.  
Then  if  $n  \geqs d/2$, the  bundle $E_\rho$ associated to a representation $\rho\co G\to \GL (n)$ is always trivial.
\end{corollary}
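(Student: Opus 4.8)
The plan is to combine Proposition~\ref{tf} with the standard stable range for complex vector bundles, the point being that a rank-$n$ bundle over a complex of dimension at most $2n$ is detected by its class in $\wt{K}^0$. Write $X\heq BG$ for a CW complex with $\dim X\leqs d$ and $H^*(X;\bbZ)$ torsion-free, and let $\rho\co G\to \GL(n)$ with $n\geqs d/2$, so that $2n\geqs d\geqs\dim X$.

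First I would argue that $E_\rho$ is stably trivial over $X$. Since $G$ is discrete, the principal bundle $EG\to BG$ is a covering space and carries a canonical flat connection, so $E_\rho$ is a flat vector bundle (with holonomy $\rho$); hence by Chern--Weil theory $c_i(E_\rho)=0$ in $H^{2i}(X;\bbQ)$ for all $i\geqs 1$. To make the Chern--Weil argument literal even though $X$ need not be a manifold, one first replaces $X$ by a compact smooth manifold with boundary that is homotopy equivalent to it (e.g.\ a regular neighborhood of an embedding into Euclidean space), to which $E_\rho$ pulls back as a flat bundle. Because $H^*(X;\bbZ)$ is torsion-free, $H^{2i}(X;\bbZ)\injects H^{2i}(X;\bbQ)$, so in fact $c_i(E_\rho)=0$ integrally for all $i\geqs 1$, and Proposition~\ref{tf} then gives that $[E_\rho]=0$ in $\wt{K}^0(X)$.

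Next I would upgrade stable triviality to genuine triviality using the dimension bound. As $\GL(n)\heq\U(n)$, the bundle $E_\rho$ is classified by a map $X\to B\U(n)$, and it suffices to show that this map is nullhomotopic. The homotopy fiber of the stabilization map $B\U(m)\to B\U(m+1)$ is $\U(m+1)/\U(m)\homeo S^{2m+1}$, which is $2m$--connected, so $B\U(m)\to B\U(m+1)$ is $(2m+1)$--connected and hence induces a bijection $[X,B\U(m)]\to[X,B\U(m+1)]$ whenever $\dim X\leqs 2m$. Since $2m\geqs 2n\geqs\dim X$ for every $m\geqs n$, these maps are all bijections, and therefore the natural map $[X,B\U(n)]\to[X,BU]$ is a bijection. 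The trivial rank-$n$ bundle and $E_\rho$ have the same image under this map (the trivial stable class, by the previous paragraph), hence represent the same element of $[X,B\U(n)]$; that is, $E_\rho$ is trivial.

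I expect the main obstacle to be the vanishing of the integral Chern classes of $E_\rho$ in the first step: everything else is formal, but making the flatness argument clean requires either invoking a standard fact about flat bundles having vanishing rational characteristic classes or carrying out the reduction to a smooth manifold sketched above. This is also the only step that uses the torsion-freeness hypothesis; the numerical hypothesis $n\geqs d/2$ enters only in the (entirely routine) stable range used in the second step.
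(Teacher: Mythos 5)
Your proof is correct and takes essentially the same route as the paper: first stable triviality of $E_\rho$ via Proposition~\ref{tf}, then the identical stable-range argument showing $[X,B\U(n)]\to[X,BU]$ is bijective once $\dim X\leqs 2n$. The only difference is that you spell out the vanishing of the integral Chern classes of the flat bundle (Chern--Weil plus torsion-freeness), a step the paper leaves implicit in its citation of Proposition~\ref{tf}.
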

\begin{proof} 
By Proposition~\ref{tf}, we know that the vector bundle associated to $E_\rho$ is stably trivial, meaning that its classifying map 
$$B\rho\co BG\heq X \to B\GL (n)$$
 becomes nullhomotopic after composing with the natural map 
$$j\co B\GL (n)\to B\GL(n+n', \bbC)$$
 for sufficiently large $n'$. Since $j$ induces an isomorphism on homotopy groups up to dimension $2n$ and a surjection in dimension $2n+1$,  the Whitehead Theorem~\cite[Section 10.3]{May-concise} shows that for all CW complexes 
$X$ of dimension at most $2n$, the map 
$$[X, B\GL (n)] \maps [X, B\GL (\infty)]$$
 is bijective.  In particular, since $j\circ B \rho$ is nullhomotopic, we conclude that $B \rho$ is itself nullhomotopic so long as $d \leqs 2n$.
\end{proof}

\begin{corollary}\label{H-rep}
Let $G = \pi_1 K$, where $K$ is an aspherical, 3--dimensional CW complex with torsion-free integral cohomology.
Then for each representation $\rho\co G\to \GL_n (\bbC)$,  the  associated vector bundle $E_\rho$ is trivial.
\end{corollary}
\begin{proof} Since $BG\heq K$, Corollary~\ref{triv} shows that $E_\rho$ is trivial whenever the degree of $\rho$ is at least 2.  Our assumptions imply that the abelianization of $K$ is free abelian, so $\Hom(G, \GL_1 (\bbC)) \isom (\GL_1 \bbC)^k$ for some $k$, and in particular is path-connected.  Hence for each $\rho\to G\to \GL_1 (\bbC)$, we have $E_\rho \isom E_1 \isom BG\cross \bbC$.\end{proof}

\subsection{Homotopy in the space of flat connections over $N^3$}\label{Heisenberg}

Let $\flatc_n (N^3)$ denote the space of flat connections on the trivial bundle $N^3\cross \U(n)$ (or, equivalently, the space of flat unitary connections on $N^3 \cross \bbC^n$).  More precisely, $\flatc_n (N^3)$ will denote the subspace of flat connections inside the  Sobolev completion, with respect to a sufficiently strong Sobolev norm, of the space of all smooth connections on $N^3\cross \U(n)$, as in~\cite[Section 5]{Baird-Ramras-smoothing}.  In this section we study the homotopy groups of $\flatc_n (N^3)$ as $n\to \infty$, using Lawson's calculation of $\K_* (H)$~\cite{Lawson-thesis, Lawson-prod, Lawson-simul}.

It was proven in~\cite[Corollary 1.3]{Baird-Ramras-smoothing} that if $M^d$ is a closed, smooth, aspherical $d$--manifold with $H^3 (M; \bbQ)\neq 0$, then  $\flatc_n (M)$ has infinitely many path components so long as $n\geqs (d+1)/2$.  In particular, $\flatc_n (N^3)$ has infinitely many path components so long as $n\geqs 2$. 
For manifolds $M$ of dimension $d> 3$,~\cite[Corollary 1.3]{Baird-Ramras-smoothing} also gives cohomological lower bounds on the rank of $\pi_m \flatc_n (M)$ for $0 < m \leqs d-3 $, but for 3-manifolds no information about the homotopy groups $\pi_m \flatc_n (M)$ ($m\geqs 1$) is obtained through the methods of that paper.  

In this section, we will show that the homotopy groups of $\flatc_n (N^3)$ are in fact very large.  Moreover, while the classes in $\pi_m \flatc_n (M)$ produced by the methods in~\cite{Baird-Ramras-smoothing} all admit representatives lying inside a single gauge orbit, we produce  classes in $\flatc_n (N^3)$ that do not admit such representatives (although see Question~\ref{go}).

This result shows a sharp contrast between flat connections over 3--manifolds  and over surfaces, where Morse theory for the Yang--Mills functional leads to the conclusion that spaces of flat connections are highly connected, with vanishing homotopy in a range tending to infinity with the rank of the underlying bundle. For details  and  precise results, see~\cite{Ramras-surface, Ramras-YM} and the references therein.

\begin{definition}\label{htpy-def}
Given a space $X$ together with a choice of representatives $\{x_C\}_{C\in \pi_0 (X)}$ for the path components of $X$, we define
$$\wt{\pi}_n (X) = \bigoplus_{C\in \pi_0 (X)} \pi_n (X, x_C).$$
Up to isomorphism, this group is independent of the chosen representatives $x_C$.
\end{definition}

Recall that the gauge group $\mcG = \Map (N^3, \U(n))$ acts on the space of all  connections on $N^3 \cross \U(n)$, and this action preserves the subspace $\flatc_n (N^3)$. More precisely, $\mcG$ is the Sobolev completion of the space of smooth maps with respect to the appropriate Sobolev norm.  The based gauge group $\mcG_0 \leqs \mcG$ is the kernel of the restriction map $\mcG \to \U(n)$ induced by evaluation at a fixed basepoint $x\in N^3$.  The holonomy map induces a fibration sequence (in fact, a principal $\mcG_0$--bundle)
\begin{equation}\label{hol} \mcG_0 \srm{i_A} \flatc_n (N^3) \xmaps{\Hol} \Hom(H, \U(n))\end{equation}
for each $n$.  The first map in this sequence is simply the inclusion of the gauge orbit of some flat connection $A\in \flatc_n (N^3)$, and sends $g\in \mcG_0$ to $g\cdot A$.  
We call maps of the form $i_A$, and their induced maps on $\wt{\pi}_*$, (\e{based}) \e{gauge orbit inclusions}.

We need the following result of Lawson.

\begin{proposition}[{\cite[Section 4.2]{Lawson-simul}}]\label{Lawson-H} 
For each $m\geqs 0$, the group $\K_m (H)$ is free abelian of countably infinite rank.
\end{proposition}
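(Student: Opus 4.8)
The plan is to obtain the statement as an application of Tyler Lawson's explicit calculation of $\K_*(H)$, combined with the reduction supplied by Theorem~\ref{Kdef}. First I would record the relevant structure of $H$. It is the central extension $1\to\bbZ\to H\to\bbZ^2\to 1$, equivalently the semidirect product $\bbZ^2\rtimes_\phi\bbZ$ with $\phi$ the shear automorphism of $\bbZ^2=\langle Y,Z\rangle$ fixing $Z$ and sending $Y\mapsto YZ$; and every finite-dimensional unitary representation of $H$ is completely reducible. The irreducibles are classified by their central character: $\rho(Z)$ is a scalar $\zeta\cdot I$ with $\zeta$ a root of unity, the value $\zeta=1$ yielding the circle's worth of $1$-dimensional representations (those factoring through the abelianization $\bbZ^2$), and for each $d\geqs 2$ further positive-dimensional families of $d$-dimensional irreducibles indexed by the primitive $d$-th roots of unity. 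Hence $\Rep(H)=\coprod_n\Hom(H,\U(n))$ is, up to homotopy, a union over such ``level'' data of pieces each assembled from the representation spaces of the abelian group $\bbZ^2$, and the number of pieces contributing to $\Hom(H,\U(n))$ grows without bound in $n$. By Theorem~\ref{Kdef} (for $m>0$; for $m=0$ one uses $\K_0(H)\isom\Gr(\pi_0\Rep(H))$) it is enough to understand the homotopy type of $\Omega B\Rep(H)$, equivalently of $\rh_*\Rep(H)$.

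The main step, which I would invoke rather than reprove, is Lawson's computation in \cite[Section 4.2]{Lawson-simul}: the deformation $K$-theory machinery — Bott-type cofiber sequences relating $\K(G)$ to $\K(G\cross\bbZ)$ and their twisted analogues, the product formula of \cite{Lawson-prod}, and a stratification of $\Hom(H,\U(n))$ by central character finer than the one appearing in the proof of Lemma~\ref{bdry} — identifies $\K_*(H)$ with the homotopy of a spectrum assembled from the contributions of the levels $d\geqs 1$. Each contribution is a (suspension of a) connective complex $K$-theory spectrum of a torus, hence finitely generated and torsion-free in every degree; the connecting maps in the assembly degenerate, so that in each degree $m\geqs 0$ the group $\K_m(H)$ is the direct sum of these level contributions. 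I would then observe that the indexing set of levels is countably infinite, that for every $m\geqs 0$ infinitely many of the contributions are nonzero in degree $m$ (immediate from the shape of the $K$-theory of a torus together with Bott periodicity), and that the whole group is of at most countable rank because each $\Hom(H,\U(n))$ is a compact real algebraic variety, hence of finite CW type with finitely many components. Combining these, $\K_m(H)$ is free abelian of countably infinite rank for every $m\geqs 0$; the case $m=0$ may also be checked by hand, since $\pi_0\Rep(H)$ is the free commutative monoid on the (countably infinite) set of path components of the irreducible strata.

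The hard part lies entirely in Lawson's paper, and I expect two points to be the real obstacles. The first is freeness: a priori the long exact sequences used to assemble $\K_*(H)$ from level contributions could introduce torsion or cause cancellation, and excluding this requires Lawson's explicit description of the connecting homomorphisms — this is where the ``free abelian'' half of the statement is actually earned. The second is the compatibility of group completion with the stratification: one must pin down the stabilizers of representations (by Schur's lemma, so that the relevant homotopy orbit spaces of strata have the expected homotopy type) and verify the hypotheses of the Group Completion Theorem stratum by stratum; over the unitary groups this is cleaner than the general linear bookkeeping indicated in the proof of Lemma~\ref{bdry}, because unitary representations are automatically semisimple, but it remains the technical heart. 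I do not see a substantially shorter route: Theorem~\ref{Kdef} reduces everything to the homotopy type of $\Omega B\Rep(H)$, but extracting the precise conclusion ``free abelian of countably infinite rank'' from that essentially reproduces Lawson's calculation.
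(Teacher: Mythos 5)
Your proposal is correct and matches the paper's treatment: Proposition~\ref{Lawson-H} is stated as Lawson's result and is not reproved here, but simply cited from \cite[Section 4.2]{Lawson-simul}, with the paper's Section~\ref{SS-sec} reviewing the same machinery you describe (the Bott cofiber sequence relating $\K(G)$ to $\Rdef(G)$ and the spectral sequence built from the homology of the spaces $\Irr_n(H)$, with representation-theoretic input from \cite{LM} and \cite{NM}). Your sketch of the classification of irreducibles by central character and the assembly of the answer from torus contributions is consistent with that outline, so the two approaches coincide.
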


\begin{theorem}\label{unbdd}  Given $m, R\geqs 1$, there exists $n_0\geqs 1$ such that for all $n\geqs n_0$,  $\wt{\pi}_m (\flatc_n (N^3))$ contains a subgroup $F$ satisfying:
\begin{enumerate}
\item The abelianization of $F$ is free of infinite rank;
\item $\Hol_* (F) \leqs \wt{\pi}_m \Hom(H, \U(n))$ has rank  $R$; and
\item Only the trivial element of $F$ is in the image of a based gauge-orbit inclusion.
\end{enumerate}
When $m=1$, and when $m$ is even, $(3)$ can be strengthened by replacing the based gauge group by the full gauge group.
\end{theorem}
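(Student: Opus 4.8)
The plan is to feed Lawson's computation into the holonomy fibration and keep the image of $\Hol_*$ under control using the rank estimate of Theorem~\ref{BR}. Recall the holonomy fibration $\mcG_0\xmaps{i_A}\flatc_n(N^3)\xmaps{\Hol}\Hom(H,\U(n))$ from~\cite{Baird-Ramras-smoothing}; it is the principal $\mcG_0$--bundle classified, up to homotopy, by the map $B$ of~(\ref{B}), and since $\mcG_0=\Map_*(N^3,\U(n))\heq\Omega\Map_*(BH,B\U(n))$ we have $\pi_0\mcG_0\isom[N^3,\U(n)]_*\isom\wt{K}^{-1}(N^3)\isom\bbZ^3$ as soon as $n\geqs 2$. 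The two numerical facts driving the proof are: (a) $\K_m(H)$, hence $\rK_m(H)\isom\rh_m\Rep(H)$, is free abelian of countably infinite rank (Proposition~\ref{Lawson-H} and Corollary~\ref{rKdef}), whereas $\wt{K}^{-m}(N^3)$ is free abelian of finite rank ($2$ or $3$ according to the parity of $m$, since $H^*(N^3;\bbZ)$ is free of finite rank), so $\ker\wt{\alpha}_m$ is free abelian of infinite rank; and (b) by Theorem~\ref{BR}, $\Img\wt{\alpha}_1$ has rank $\leqs\beta_1(N^3)=2<3=\rk\wt{K}^{-1}(N^3)$.

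Fix $m,R\geqs 1$. First I would pick linearly independent classes in $\ker\wt{\alpha}_m\subset\rh_m\Rep(H)$; by Proposition~\ref{univ} each is represented by a family $\rho^{(j)}\co S^m\to\Hom(H,\U(n))$, and — working inside a single $\ol{F}$ and then stabilising, as in the proofs of Theorems~\ref{gen-thm} and~\ref{Kdef} — I may arrange that the $\rho^{(j)}$ all take the same value $\rho_0$ at the basepoint, that the classes $\la\rho^{(j)}\ra$ are still linearly independent in $\pi_m(\Hom(H,\U(n)),\rho_0)$, and that each $B\circ\rho^{(j)}$ is nullhomotopic (membership in $\ker\wt{\alpha}_m$ only gives \emph{stable} nullhomotopy of $B\circ\rho^{(j)}$ by Theorem~\ref{TAS} and Proposition~\ref{univ}, but as $E_{\rho_0}$ is trivial by Corollary~\ref{H-rep}, adding trivial summands removes the instability without changing the class). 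Since $B$ classifies the holonomy bundle, each $\rho^{(j)}$ then lifts to $\wt{\rho}^{(j)}\co S^m\to\flatc_n(N^3)$ with $\Hol\circ\wt{\rho}^{(j)}=\rho^{(j)}$; and since based gauge transformations leave $\Hol$ unchanged, I may translate so that $\wt{\rho}^{(j)}(1)=A_0$ for all $j$, where $A_0$ is a fixed flat connection with holonomy $\rho_0$.

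Next, spread the lifts across components. For $\hat g\in\mcG_0$ the family $\hat g\cdot\wt{\rho}^{(j)}$ still lifts $\rho^{(j)}$, but $\hat g\cdot A_0$ may lie in a different component of $\flatc_n(N^3)$: the components lying over the component $D=[\rho_0]$ of $\Hom(H,\U(n))$ are the orbits of the monodromy $\pi_1(D,\rho_0)\to\pi_0\mcG_0\isom\bbZ^3$, whose image — being carried by $\pi_0\mcG_0\isom\wt{K}^{-1}(N^3)$ into $\Img\wt{\alpha}_1$ — has rank $\leqs 2$, hence infinite index. So for $n\geqs 2$ there are infinitely many such components; choose $\hat g_1,\hat g_2,\ldots\in\mcG_0$ representing infinitely many distinct ones, put $j(i)=i$ for $i\leqs R$ and $j(i)=1$ for $i>R$, let $C_i$ be the component of $\hat g_i\cdot A_0$, let $y_i=\la\hat g_i\cdot\wt{\rho}^{(j(i))}\ra\in\pi_m(\flatc_n(N^3),\hat g_iA_0)$ viewed in the $C_i$--summand of $\wt{\pi}_m(\flatc_n(N^3))$, and set $F=\la y_1,y_2,\ldots\ra$. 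The $C_i$ are distinct, so $F\isom\bigoplus_i\la y_i\ra$; since $\Hol_*(y_i)=\la\rho^{(j(i))}\ra$ has infinite order, $F$ is free abelian of infinite rank (giving~(1)) and $\Hol_*(F)=\la\la\rho^{(1)}\ra,\ldots,\la\rho^{(R)}\ra\ra$ has rank $R$ (giving~(2)).

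For~(3), the fibration long exact sequence yields $\ker\bigl(\Hol_*|_{\pi_m(\flatc_n(N^3),A)}\bigr)=\Img(i_A)_*$ for every flat $A$, and a short diagram chase identifies the image of $(i_A)_*\co\wt{\pi}_m\mcG_0\to\wt{\pi}_m\flatc_n(N^3)$ with $\bigoplus_{C:\,D_C=D_{[A]}}\ker(\Hol_*|_C)$; thus a nonzero class lies in the image of some based gauge--orbit inclusion only if it is supported on components lying over a single component of $\Hom(H,\U(n))$ and each of its coordinates is annihilated by $\Hol_*$ — and no nonzero $w=\sum n_iy_i\in F$ is, because $\Hol_*(n_iy_i)=n_i\la\rho^{(j(i))}\ra\neq 0$. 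For the strengthening, when $m$ is even and $n>m/2$ one has $\pi_m\U(n)=0$, so constant gauge transformations contribute nothing to $\pi_m$ and the full gauge--orbit inclusion has the same image on $\wt{\pi}_m$ as the based one; when $m=1$, the $\pi_1\U(n)$--part of a full gauge--orbit inclusion maps under $\Hol_*$ into $\pi_1$ of a conjugation orbit $\U(n)/\Stab_\rho$, which is trivial because $\Stab_\rho\isom\prod_i\U(n_i)$ (Schur) and $\prod_i\U(n_i)\injects\U(n)$ is surjective on $\pi_1$, so that part too lies in $\ker\Hol_*=\Img(i_A)_*$. The main obstacle is the component--spreading step: within a single component of $\flatc_n(N^3)$ conditions~(1)--(3) are jointly impossible (there $\ker\Hol_*=\Img(i_A)_*$ forces $\Hol_*|_F$ injective, hence $\rk\Hol_*(F)=\rk F$), so one must prove that infinitely many components of $\flatc_n(N^3)$ sit over one component of $\Hom(H,\U(n))$, and the only available leverage for this is Theorem~\ref{BR}, which has to be applied in degree $1$ and matched against the monodromy of the holonomy fibration.
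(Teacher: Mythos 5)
Your proposal is correct and follows essentially the same route as the paper's proof: Lawson's computation gives an infinite-rank kernel of $\wt{\alpha}_m$, kernel classes are stabilized until $B\circ\rho$ is genuinely nullhomotopic and then lifted through the holonomy fibration, and the lifts are spread over infinitely many components of $\flatc_n (N^3)$ lying over a single component of $\Hom(H,\U(n))$, whose existence comes from the rank deficiency of the boundary map on $\pi_1$. The only differences are cosmetic: you index generators by coset representatives of $\Img(\partial)$ in $\pi_0 \mcG_0$, deducing infinite index from Theorem~\ref{BR}, where the paper takes powers $g_i^k$ of elements whose cyclic subgroups meet $\Img(\partial)$ trivially, citing \cite[Corollary 1.3]{Baird-Ramras-smoothing}; and for the $m=1$ strengthening you kill $\pi_1$ of the conjugation orbit via $\Stab_\rho\isom\prod_i\U(n_i)$ surjecting onto $\pi_1\U(n)$, where the paper factors through $\U(n)/Z$.
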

\begin{proof} Since $\wt{K}^{-m} (BH) \isom K^{-m} (N^3)$ is finitely generated, 
Proposition~\ref{Lawson-H} implies that
the kernel of 
$\wt{\alpha}_m$
is  free abelian of countably infinite rank (note here that subgroups of free abelian groups -- of any rank -- are free~\cite[Appendix 2]{Lang-algebra}).  

Now assume $m\geqs 1$.  By Theorem~\ref{Kdef}, the natural map
$$[S^m, \Rep(H)] \maps \rh_m (\Rep (H))\isom \rK_m (H)$$
is surjective.  This means that we can choose  families 
$$\rho_i \co S^m \to \Hom(H, \U(n_i)),$$ $i=1, 2, \ldots$, such that the associated classes in $\rh_m (\Rep (H))$ are linearly independent. We have  $\wt{\alpha}_*[\rho_i] = [B \circ \rho_i]$, and $B \circ \rho_i$ is nullhomotopic by Corollary~\ref{H-rep}.

Now fix an integer $R>0$.   Let $n_0 = \max(n_1, \ldots, n_R)$.  Then for $i = 1, \ldots, R$, and for each $n\geqs n_0$, the map
$$S^m \xmaps{B\circ \rho_i}  \Map_* (BH, B\U(n_i)) \maps \Map_* (BH, B\U(n_0))$$
is nullhomotopic.
Say $n\geqs n_0$, and 
let  $G$ be the subgroup of $\wt{\pi}_m \Hom(H, \U(n))$ generated by the elements 
$\{\langle \rho_i\rangle\}_{i=1}^R$ .
Then $G$
 surjects onto the  subgroup of $\rh_m (\Rep (H))$
generated by $\{[\rho_i]\}_{i=1}^R$, which is free abelian of rank $R$, 
so the abelianization of $G$ must have rank at least $R$ (of course when $m > 1$, the group $G$ is already abelian).

By~\cite[Remark 5.6]{Baird-Ramras-smoothing}, the boundary map on homotopy groups associated to the principal bundle (\ref{hol})
can be identified (up to isomorphism) with the map 
$$B_*\co \pi_* \Hom(H, \U(n))\maps \pi_* \Map_* (BH, B\U(n)),$$
so each class $\langle \rho_i\rangle$  maps to zero under the boundary map for the fibration sequence (\ref{hol}) (with appropriately chosen basepoints). Hence there exist maps $$\alpha_i \co S^m \to \flatc_n (N^3)$$ such that $\Hol_* (\langle \alpha_i \rangle)   = \langle \rho_i \rangle$.

The based gauge group $\mcG_0$  is weakly equivalent to the continuous mapping space 
$$\Map_* (N^3, \U(n))\heq \Map_* (BH, \U(n)),$$ and
$\Map_* (BH, B\U(n))$ is the classifying space of $\Map_* (N^3, \U(n))$ (this result is originally due to Gottlieb~\cite{Gottlieb}).  Hence  for $*\geqs 1$ we have
$$\pi_* \Map_* (BH, B\U(n))\isom \pi_{*-1} \mcG_0.$$

Let $A_i$ denote a flat connection in the image of $\alpha_i$, and let $\psi_i = \Hol(A_i)$. The long exact sequence associated the to the fibration sequence (\ref{hol}) ends with
$$\cdots \maps \pi_1 (\Hom(H, \U(n)), \psi_i) \srm{\partial} \pi_0 \mcG_0 \xmaps{(i_{A_i})_*}\pi_0 \flatc_n (N^3) \maps \pi_0 (\Hom(H, \U(n)).$$
Again, $\partial$ can be identified (up to isomorphism) with 
$$B_* \co \pi_1 (\Hom(H, \U(n)), \psi_i) \maps \pi_1 (\Map_* (BH, B\U(n)), B\psi_i),$$
 and the cokernel of this map has rank 1 by \cite[Corollary 1.3]{Baird-Ramras-smoothing}.  This means there exists an element $g_i\in \mcG_0$ such that the path component $[g_i]$ has infinite order in 
 $$ \pi_0 \mcG_0  \isom \pi_1 (\Map_* (BH, B\U(n)), B\psi_i)$$
and the subgroup of $\pi_0 (\mcG_0)$ generated by $[g_i]$ intersects the image of $\partial$ trivially. 
In general, given a (left) principal bundle $K$--bundle
$$K \maps P \srm{q} B,$$
if  $k_1, k_2 \in K$ and $p\in P$ are elements such that
$[k_1 \cdot p]=[k_1 \cdot  p]$  in $\pi_0 P$, then  $[k_1^{-1} k_2]$ is in the image of the boundary map
$\pi_1 (B, q(p)) \srm{\partial} \pi_0 (K)$ (where we identify $K$ with $q^{-1} (p)$ via $p\cdot k \goesto k$).
This implies that for each fixed $i\in \{1,\ldots, R\}$, the path components
$[A_i], [g_i \cdot A_i],  [g_i^2 \cdot A_i], \cdots$
are distinct in $\pi_0 \flatc_n (N^3)$.

Let $F\leqs \wt{\pi}_m \flatc_n (N^3)$  be the subgroup generated by the elements 
$\langle g_i^k\cdot \alpha_i\rangle$, $i=1, \ldots, R$, $k = 0, 1, \ldots$.  We claim that $F$ satisfies the conditions in the theorem.  First, note that $\Hol_* (F) = G$, so $F$ satisfies (2).  

Next, we show that the abelianization of $F$ is freely generated by the elements $\langle g_i^k\cdot \alpha_i\rangle$.  For simplicity, we give the argument when $m>1$, in which case $F$ is already abelian; the argument for $m=1$ just requires notational changes.
 If $\sum_{i, k} \lambda_{i,k} \langle g_i^k \cdot \alpha_i\rangle=0$, then summing the terms whose images lie in a particular path component $C$ of $\flatc_n (N^3)$ will also give zero.  By choice of $g_i$, such a sum contains at most one term of the form $\lambda_{i,k} \langle g_i^k \alpha_i\rangle$ for each $i$.  Thus we have 
$\sum_i \lambda_{i, k_i} \langle g_i^{k_i} \alpha_i\rangle = 0$ for some collection of natural numbers $k_i$ ($i=1, \ldots, R$), and every term from the original sum whose image lies in $C$ appears in this new sum.  But now
$$ 0 =  \Hol_*  (0) = \Hol_* \left(\sum_i \lambda_{i, k_i} \langle g_i^{k_i} \alpha_i\rangle\right) =\sum_i \lambda_{i, k_i} \langle\rho_i\rangle,$$ 
and   linear independence of the elements $\langle\rho_i\rangle$ implies that $\lambda_{i, k_i} = 0$ for each $i$.  Applying this argument to each path component, we see that all the coefficients $\lambda_{i,k}$ must be zero, as desired.

Finally, we consider gauge-orbit inclusions.
Say $f\in F$ is in the image of a based gauge-orbit inclusion.  We need to prove that $f=0$.  Again, for notational convenience we work in the case $m>1$.
We can write $$f = \sum_{i, k} \lambda_{i,k} \langle g_i^k \cdot \alpha_i\rangle,$$ and it suffices to consider the case in which all the maps $g_i^k \cdot \alpha_i$ land in the same  path component of $\flatc_n (N^3)$.  Then, as before, there can be at most one term in this sum for each $i$, so we can write $$f = \sum_i \lambda_{i, k_i}\langle g_i^k \cdot \alpha_i\rangle.$$  
Since $f$ is in the image of a based gauge-orbit inclusion $\mcG_0 \to \flatc_n (N^3)$, and the composite 
$$\mcG_0 \maps \flatc_n (N^3)  \maps \Hom(H, \U(n))$$ 
is constant, 
we have $\sum_i  \lambda_{i, k_i} (\langle \rho_i\rangle ) = 0$.  But this element maps to $\sum_i  \lambda_{i, k_i}  [\rho_i]$ in $\rh_m (\Rep(H))$, 
which implies that all of the coefficients $ \lambda_{i, k_i}$ are in fact zero.

Finally, we consider the orbits of the full gauge group.
Evaluation at a point gives a split fibration 
\begin{equation}\label{gf}\mcG_0 \to \mcG\to \U(n),
\end{equation}
and since $\U(n)$ is path-connected, $\wt{\pi}_* \mcG \isom \wt{\pi}_* \mcG_0 \cross \pi_* \U(n)$ for each $*\in \bbN$.  
Now consider the case where $m=2l$ is even. 
In constructing the subgroup $F$, we are free to choose $n_0$ large with respect to $2l$, and hence we may assume that that $\pi_{2l} \U(n) = 0$ whenever $n\geqs n_0$.  Thus when $m$ is even, the image of a full gauge-orbit inclusion is the same (in homotopy) as the image of the corresponding based gauge-orbit inclusion.  

To obtain the desired conclusion when $m=1$,  it suffices to show that for every gauge-orbit inclusion, the composite
$$\U(n) \srm{s} \mcG \maps \flatc_n (N^3) \maps \Hom(H, \U(n))$$
(where $s$ is a splitting of (\ref{gf}))
 induces the zero map on $\pi_1$.  The image of this composite map lies inside a single conjugation orbit inside $\Hom(H, \U(n))$, so this  composite factors through the projection $\U(n) \to \U(n)/Z$, where $Z \isom \U(1)$ denotes the center of $\U(n)$.  Since the inclusion $Z\injects \U(n)$ induces isomorphisms on $\pi_0$ and $\pi_1$, we see that $\pi_1 (\U(n)/Z) = 0$.
\end{proof}

Here are some natural questions regarding the above results.

\begin{question} $\label{go}$ Can the groups $F$ in Theorem~\ref{unbdd} be constructed more explicitly? 
When $m$ is odd and greater than 1, can they be constructed so as to avoid non-trivial elements in the image of the full gauge-orbit inclusions $\mcG\to \flatc_n (N^3)$ $?$
\end{question}

\begin{question}\label{hd} Do the results of this section  extended to higher-dimensional Heisenberg manifolds?
\end{question}

\subsection{Flat connections and the topological Atiyah--Segal map}

The arguments above suggest that when $G$ is a group such that $BG$ has the homotopy type of a closed, smooth (aspherical) manifold $M$, the homotopy fiber of the topological Atiyah--Segal map should be closely related to spaces of flat connections on bundles over $M$. Here we give one result along these lines.

First we formulate a general result about topological monoids. Let $p\co M\to N$ be a homomorphism of homotopy commutative topological monoids. We will write the operations in $M$ and $N$, and the induced pointwise multiplication operations for maps into $M$ and $N$, as $\bt$.

Let $I \subset \pi_0 N$ denote the image of $p_*\co \pi_0 M\to \pi_0 N$. Assume that $I$ is cofinal in $\pi_0 N$ 
$($that is, for each $c\in \pi_0 N$ there exists $d\in \pi_0 N$ such that $c d\in I$$)$ and that there is a map of monoids $s\co I \to N$ satisfying $[sc] = c$ for each component $c\in I$, where $[sc]\in \pi_0 N$ denotes the path component of $sc\in N$. In other words, $s$ is a partially defined section of the tautological map $N\to \pi_0 N$. Then the collection of homotopy fibers $F := \coprod_{c\in I} \hofib_{sc} (p)$ inherits a topological monoid structure from $M$ and $N$  -- here it is critical that the basepoints $\{sc\,:\, c\in I\} \subset N$ is a submonoid.

\begin{proposition}\label{hofib} 
In the setting above, 
for each $m>0$, there is a surjection
$$\rh_m F \maps \ker (\Omega Bp \co \pi_* (\Omega BM, *)\to \pi_* (\Omega BN, *)).$$
\end{proposition}

Note that if   all elements in $M$ and $N$ are strongly anchored, then the above kernel agrees with
$\ker (\Omega Bp_* \co \pi_* (\Omega BM, *)\to \pi_* (\Omega BN, *))$.

\begin{proof}  The natural maps from the homotopy fibers to $M$ induce maps of monoids $[S^m, F]\to [S^m, M]$ and $\rh_m F \to \rh_m M$, and it is immediate that the composite map $\rh_m F\to \rh_m M\to \rh_m M$ is zero. It remains only to show that if $\mu\co S^m\to M$ maps to zero in $\rh_m N$, then it lifts, up to homotopy, to $F$. Without loss of generality, we may assume that $p(\mu(1)) = sc$ for some $c\in I$. Now $p_*[\mu] = 0$ and cofinality of $I$ imply that $[\mu\bt sc'] = [sd]$ for some $c', d\in I$ (here   $sc'$ and $sd$ denote constant maps with these values). Letting $H\co S^m \cross I \to M$ be a nullhomotopy from $\mu\bt sc'$ to $sd$, the pair $(\mu\bt sc, H)$ defines a map $S^m\to \hofib_{sc} (p)$, whose image in $\rh_m M$ is $[\mu\bt sc] \equiv [\mu]$.
\end{proof}

We note that while $\pi_k \hofib_* (\Omega Bp)$ also surjects onto the above kernel, we do not expect an isomorphism $\rh_m F\isom \pi_m \hofib_* (\Omega Bp)$ in general:  one might try to construct a map $\rh_{m+1} N \to \rh_m F$ using the boundary maps for the homotopy fiber sequences $\hofib_{sc} (p)\to M\to N$, but for each component of $N$, there may be many such boundary maps, coming from  different path components, and hence choices of basepoint, in $\hofib_{sc} (p)$.

Before sketching the proof of Proposition~\ref{hofib}, we consider what it tells us about flat connections. 
Let $G$ be the fundamental group of a closed, aspherical 3-manifold $X$ with torsion-free cohomology, e.g. $G$ could be the Heisenberg group $H$.
Then setting $M = \Rep(G)$, $N = \V (BG)$, and $p = B$, Corollary~\ref{H-rep} tells us that the image of $B_*$ on components is just the components of nullhomotopic maps in $\V (BG)$, which is cofinal in $\pi_0 \V(BG)$ since every bundle over $BG$ is a direct summand of a trivial bundle. Now we may define $s$ by choosing the constant maps $BG\to BU(n)$ to represent their components. By~\cite[Theorem 5.5]{Baird-Ramras-smoothing}, the homotopy fibers of $B$ are weakly equivalent to spaces of flat connections over the manifold $X$, meaning that there are natural zig-zags of maps between these spaces that induce isomorphisms on $\pi_0$ and on homotopy groups for all compatible choices of basepoints. Since the unbased homotopy sets are naturally in bijection with the $\pi_1$--orbits of the unbased homotopy sets, we obtain a bijection $[S^m, F] \isom [S^m, \flatc (X)]$, where 
$$\flatc (X):= \coprod_n \flatc_n (X).$$
Hence $\rh_m  \flatc (X)$ surjects onto the kernel of the reduced topological Atiyah--Segal map. In the case of the Heisenberg manifold $N^3$, we saw above that this kernel is a non-finitely generated free abelian group. In summary, we have:

\begin{corollary}\label{fl-mon} The monoid $\rh_m \flatc (N^3)$ has infinite rank for each $m>0$.
\end{corollary}


\section{Multiplicativity of the topological Atiyah--Segal map}\label{image-sec}

In this section, we upgrade the topological Atiyah--Segal map to a map of $E_\infty$ ring spectra, so that the induced map $\alpha_*$ on homotopy becomes a ring homomorphism.  This additional structure allows us to deduce further constraints on the image of $\alpha_*$ for groups satisfying Kazhdan's property $(\e{T})$ and for the discrete Heisenberg group. In particular, in Section~\ref{T-subsec} we prove the following result regarding families of flat vector bundles.

\begin{theorem}\label{T-thm} Let $G$ be a discrete group satisfying Kazhdan's property $(\e{T})$, and assume that $BG$ has the homotopy type of a finite CW complex.  Then for every family
 $\rho\co S^{m}\to \Hom(G, \U(n))$,
the bundle $E_\rho$ represents a torsion class in $\wt{K}^0 (S^m\cross BG)$.
\end{theorem}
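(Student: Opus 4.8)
The plan is to combine the bundle-theoretic formula for the reduced Atiyah--Segal map (Theorem~\ref{TAS}) with the $E_\infty$--ring refinement of $\alpha$ constructed in this section and with the computation of $\K_*(G)$ for property (T) groups. First I would reduce the theorem to a torsion statement about $\wt{\alpha}_m$. Fix $\rho\co S^m\to\Hom(G,\U(n))$ with $m\geqs1$, write $\pi_{BG}\co S^m\cross BG\to BG$ for the projection and $\pi\co S^m\cross BG\to S^m\sm BG$ for the collapse of $S^m\vee BG$, and set $n=\dim E_\rho$. Since $E_{\wt{\rho(1)}}\isom\pi_{BG}^*E_{\rho(1)}$, and since by Theorem~\ref{TAS} the class $(\pi^*)^{-1}\bigl([E_\rho]-[E_{\wt{\rho(1)}}]\bigr)\in\wt K^{-m}(BG)$ is precisely $\wt{\alpha}_m([\rho])$ transported along the isomorphisms of Diagram~(\ref{TAS-diag}), applying $\pi^*$ and rearranging gives
$$[E_\rho]-n\;=\;\pi_{BG}^*\bigl([E_{\rho(1)}]-n\bigr)\;+\;\pi^*\bigl(\wt{\alpha}_m([\rho])\bigr)$$
in $\wt K^0(S^m\cross BG)$. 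The first summand is a pullback of $[E_{\rho(1)}]-n\in\wt K^0(BG)$, which is torsion: $E_{\rho(1)}$ is a flat bundle, so its rational Chern character equals its rank, and $\wt K^0(BG)$ is finitely generated since $BG$ is finite (compare Theorem~\ref{BR} in degree zero). So it suffices to prove that $\wt{\alpha}_m([\rho])$ is torsion for every $m\geqs1$. The case $m=0$ is immediate, as $E_{\rho(\pm1)}$ are flat bundles over $BG$.

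To show $\wt{\alpha}_m([\rho])$ is torsion for all $m\geqs1$, I would show that $\wt{\alpha}_*\otimes\bbQ$ vanishes in all degrees. Two inputs are needed. By the construction of this section, $\alpha\co\K(G)\to\mcK(BG)$ is a morphism of $E_\infty$--ring spectra compatible with the ring maps from $\ku=\K(\{1\})=\mcK(*)$; in particular $\alpha_*$, and hence $\wt{\alpha}_*$, is linear over $\pi_*(\ku)=\bbZ[\beta]$. Secondly, the computation of deformation $K$--theory of property (T) groups carried out earlier in this section (due to Lawson and Wang) shows that, rationally, $\K_*(G)$ is generated as a $\bbQ[\beta]$--module by $\K_0(G)$; consequently $\rK_*(G)\otimes\bbQ$ is generated over $\bbQ[\beta]$ by $\rK_0(G)\otimes\bbQ$ (and is concentrated in even degrees). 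Granting this, $\bbZ[\beta]$--linearity gives $\wt{\alpha}_m\bigl(\rK_m(G)\otimes\bbQ\bigr)\subseteq\bbQ[\beta]\cdot\wt{\alpha}_0\bigl(\rK_0(G)\otimes\bbQ\bigr)$. But $\rK_0(G)$ is spanned by differences $[\sigma]-[\sigma']$ of representation classes, and $\wt{\alpha}_0([\sigma])=[E_\sigma]-\dim\sigma$ is rationally trivial because $E_\sigma$ is flat; hence $\wt{\alpha}_0(\rK_0(G)\otimes\bbQ)=0$ and therefore $\wt{\alpha}_*\otimes\bbQ=0$ in all degrees. In particular $\wt{\alpha}_m([\rho])$ is torsion, which together with the first paragraph completes the proof.

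The only non-formal step---and the main obstacle---is supplying the second input, the structure of $\K_*(G)$ for property (T) groups. The approach there is to invoke S.~P.~Wang's theorem that, for a property (T) group $G$, each $\Hom(G,\U(n))$ has finitely many connected components and its locus of irreducible representations is a finite union of $\U(n)$--orbits (property (T) is used both for the vanishing of $H^1$ with coefficients in the adjoint of an irreducible, giving local rigidity of the irreducible orbits, and for Wang's finiteness), and then to feed this into Lawson's spectral-sequence computation of $\K_*(G)$, which itself rests on the $E_\infty$--ring structure on $\K(G)$ arising from tensor product of representations. The outcome presents $\K(G)$, up to splitting off $\ku$ and up to torsion, as a sum of copies of $\ku$ indexed by the isolated irreducible characters of $G$, which yields exactly the $\bbQ[\beta]$--generation in degree zero used above. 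The remaining ingredients---the bundle identity of the first paragraph, the invocation of Theorem~\ref{TAS} and Diagram~(\ref{TAS-diag}), and the identification $\wt{\alpha}_0([\sigma])=[E_\sigma]-\dim\sigma$ together with flatness of $E_\sigma$---are routine.
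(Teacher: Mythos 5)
Your proposal is correct and follows essentially the same route as the paper: Theorem~\ref{TAS} reduces the statement to showing $\wt{\alpha}_m([\rho])$ is torsion plus Chern--Weil triviality of the flat summand $E_{\wt{\rho(1)}}$, and the torsion claim is obtained exactly as in Lemma~\ref{K-T} and Proposition~\ref{T-prop} by combining Wang's finiteness/rigidity theorem, Lawson's spectral sequence showing $\beta^m_*\co\K_0(G)\to\K_{2m}(G)$ is an isomorphism, and the multiplicativity of $\alpha$ from Theorem~\ref{TAS-mult}. Your reformulation ``$\wt{\alpha}_*\otimes\bbQ=0$ by $\bbZ[\beta]$--linearity and rational vanishing of $\wt{\alpha}_0$'' is just the rank-one image argument of Proposition~\ref{T-prop} stated in equivalent terms.
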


Note that by Lemma~\ref{tf}, when $H^*(BG; \bbZ)$ is torsion-free the conclusion of Theorem~\ref{T-thm} can be strengthened: $E_\rho$ is in fact stably trivial.

 There are many interesting groups to which this result applies.  All torsion-free word hyperbolic groups admit finite CW models for $BG$, built using Rips complexes~\cite[Corollary 4.12]{Short-hyperbolic}.  There are many such groups with property (T), including cocompact, torsion-free lattices in Sp$(n, 1)$~\cite{Bekka-T}.

\subsection{Bipermutative structures}\label{E-infty-sec}
Kronecker product of matrices makes the unitary permutative action sequences giving rise 
to $\K(G)$ and $\mcK (BG)$ into \e{bipermutative} action sequences, in the sense described in  Section~\ref{ring-sec}; the details are just a routine extension of the computations in May~\cite[VI \S5]{May-577}.
We thus obtain functors $\K_\otimes$ and $\mcK_\otimes$ from the category of discrete groups to the category of $E_\infty$ ring spectra, which become naturally equivalent to $\K$ and $\mcK$ after applying the forgetful functor to spectra.

\begin{theorem}\label{TAS-mult} There is a natural transformation $\alpha^\otimes$ between the functors $\K_\otimes$ and $\mcK_\otimes$, which becomes equivalent to $\alpha$ after applying the forgetful functor from $E_\infty$ ring spectra to spectra.  In particular, $\alpha_*$ is a homomorphism of unital rings, and $\wt{\alpha}_*$ is a homomorphism of non-unital rings.
\end{theorem}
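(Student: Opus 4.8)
The plan is to upgrade the classifying--space map $B$ of Section~\ref{TAS-sec} from a morphism of \emph{permutative} action sequences to a morphism of \emph{bipermutative} action sequences, and then run it through May's multiplicative infinite loop space machine.

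\textbf{Step 1: $B$ is bipermutative.} As recalled in Section~\ref{ring-sec} and in the discussion of $\K_\otimes$ and $\mcK_\otimes$ above, Kronecker product of matrices enhances the tautological unitary action sequence --- and hence $\A(G)$ and $\A_K(BG)$ --- to bipermutative action sequences. On $\A(G)$ the multiplicative monoidal operation is $\otimes\co\Hom(G,\U(m))\times\Hom(G,\U(n))\to\Hom(G,\U(mn))$, $(\rho,\sigma)\mapsto\rho\otimes\sigma$ with $(\rho\otimes\sigma)(g)=\rho(g)\otimes\sigma(g)$; on $\A_K(BG)$ it is the pointwise Kronecker product on the spaces $\Map_*(BG,B\U(\cdot))$, i.e.\ $(f,g)\mapsto\bar\otimes\circ(f,g)$, where $\bar\otimes\co B\U(m)\times B\U(n)\to B\U(mn)$ is the realization of the Kronecker homomorphism $\U(m)\times\U(n)\to\U(mn)$. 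We already know from Section~\ref{TAS-sec} that $B$ is $\U(n)$--equivariant and respects the additive structure, so it remains to check compatibility with $\otimes$, with the multiplicative unit $*_1$, with the multiplicative commutativity operators $D_{m,n}$, and with the zero and distributivity axioms of Definition~\ref{bp-action-seq}. The crucial observation is that $B$, being the geometric realization of the nerve, preserves finite products, so that $B(G\times G)\cong BG\times BG$ and $B$ sends the diagonal homomorphism $G\to G\times G$ to the diagonal of $BG$. Since $\rho\otimes\sigma$ factors as $G\xrightarrow{\Delta}G\times G\xrightarrow{\rho\times\sigma}\U(m)\times\U(n)\xrightarrow{\mathrm{Kron}}\U(mn)$, applying $B$ yields $B(\rho\otimes\sigma)=\bar\otimes\circ(B\rho\times B\sigma)\circ\Delta_{BG}=(B\rho)\otimes(B\sigma)$, exactly as for the additive operation. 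Compatibility with $*_1$ is the trivial observation that $B$ of the trivial one--dimensional representation is the constant map at the basepoint of $B\U(1)$; compatibility with the $D_{m,n}$ is automatic, as the morphism $\A(G)\to\A_K(BG)$ is the identity on each group $\U(\cdot)$ and $D_{m,n}$ is the same permutation matrix on both sides; and the zero and right--distributivity axioms are identities among maps assembled from $\oplus$, $\otimes$, $\Delta$, and fixed permutation matrices, so they too are preserved by the product--preserving functor $B$. Thus $B$ defines a morphism of bipermutative action sequences $\A(G)\to\A_K(BG)$, natural in $G$.

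\textbf{Step 2: pass to $E_\infty$ ring spectra.} The translation--category construction turns this into a morphism of bipermutative categories internal to $\Top$, and applying May's multiplicative infinite loop space functor $\bbK_\infty$~\cite{May-577,May-bipermutative,May-ring-space} produces a natural transformation of $E_\infty$ ring spectra $\alpha^\otimes\co\K_\otimes(G)\to\mcK_\otimes(BG)$. To identify its underlying map of spectra: the forgetful functor from $E_\infty$ ring spectra to spectra carries $\bbK_\infty(\mcC)$ to a spectrum naturally weakly equivalent to the output of May's \emph{additive} machine applied to the underlying permutative category $(\mcC,\oplus)$ --- this is May's comparison of the additive and multiplicative machines, the key step being~\cite[Theorem 9.3]{May-ring-space}, and it is natural in the bipermutative category $\mcC$. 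Applying this naturally to the bipermutative translation categories underlying $\K_\otimes(G)$ and $\mcK_\otimes(BG)$, and recalling that $\alpha$ was \emph{defined} (Definition~\ref{TAS-def}) by feeding the same morphism of permutative categories into the additive machine, we conclude that the underlying map of spectra of $\alpha^\otimes$ is equivalent to $\alpha$.

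\textbf{Step 3: consequences on homotopy.} Since $\alpha$ underlies a map of $E_\infty$ (in particular associative, commutative, unital) ring spectra, $\alpha_*\co\K_*(G)\to K^{-*}(BG)$ is a homomorphism of graded unital rings. For the reduced map, note that $\K(G)\to\K(\{1\})=\ku$ and $\mcK(BG)\to\mcK(*)=\ku$ are the $E_\infty$ ring maps induced by $G\to\{1\}$, each split by an $E_\infty$ ring map induced by $\{1\}\to G$; thus $\K(G)$ and $\mcK(BG)$ are augmented $E_\infty$--$\ku$--algebras, and $\alpha^\otimes$ is a map of such (by naturality of $\alpha^\otimes$ applied to $\{1\}\hookrightarrow G$, together with $\alpha^{\{1\}}=\mathrm{Id}_\ku$). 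Its restriction to augmentation ideals is $\wt\alpha\co\rK(G)\to\wt{\mcK}(BG)$ (Definitions~\ref{rK} and~\ref{map-act}), a map of non--unital $E_\infty$ ring spectra, so $\wt\alpha_*$ is a homomorphism of non--unital graded rings.

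\textbf{Main obstacle.} The one step requiring genuine care is Step~1: one must fix the normalization of the Kronecker product (and of the multiplicative commutativity operators $D_{m,n}$) so that the coherence axioms of Definition~\ref{bp-action-seq} hold on the nose, following the conventions of May~\cite[VI \S5]{May-577}. Once those conventions are in place, the verification that $B$ is bipermutative is routine given that $B$ preserves products, and Steps~2--3 are formal, resting only on the functoriality of May's machines and the naturality of his additive--multiplicative comparison.
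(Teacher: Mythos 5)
Your proposal is correct and follows essentially the same route as the paper: the paper's (much terser) proof likewise observes that the simplicial classifying space functor $B$ respects the Kronecker-product multiplicative structure "by functoriality," feeds the resulting morphism of bipermutative (translation) categories into May's multiplicative machine, and obtains $\wt{\alpha}$ as the induced map on the kernels of the compatible augmentations to $\ku$. Your Steps 1--3 simply make explicit the product-preservation and coherence checks that the paper leaves to the reader.
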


\begin{proof}  The desired natural transformation is again induced by the simplicial classifying space functor $B$, which respects the multiplicative structure as well as the additive structures (by functoriality, essentially).  The statement regarding $\wt{\alpha}_*$ follows from the fact that $\wt{K}_* (G)$ and $\wt{\mcK}_* (BG)$ are simply the kernels of the compatible (ring) homomorphisms induced by the inclusion $\{1\} \to G$.   
\end{proof}

The defect in this construction is that while the homotopy groups  $\mcK_* (X)$  agree \e{additively} with the complex topological $K$--theory of $X$,  
the ring structure 
is not immediately accessible in general, and we postpone discussion of this point to future work.
Nevertheless, applications of Theorem~\ref{TAS-mult} are provided in Section~\ref{image-sec} below,  based on the following (rather limited) information regarding the rings $\mcK_* (X)$.
When $X = \{*\}$,   May~\cite[VIII \S2]{May-577} showed that the ring $\mcK_* (*)$ is isomorphic to $\pi_*  \ku  = \bbZ [\beta]$, where $\beta\in \pi_2 (\ku)\isom \bbZ$ is a generator.  In other words, this is the standard Bott-periodic connective $K$--theory ring of a point.  For each finite CW complex $X$, the projection $X\to  *$ induces an injective ring map $ \mcK_* (*)  \to  \mcK_* (X)$, which embeds the ring $\pi_* \ku$ in $\mcK_* (X)$.
 
%
%
%
%
%
\subsection{Deformation $K$--theory and spaces of irreducible representations}\label{SS-sec}
We need to review some of Lawson's results from~\cite{Lawson-prod, Lawson-simul}, which allow one to compute (unitary) deformation $K$--theory from homological information about spaces of irreducible representations. 

First, consider the space
$$\ol{\Rep} (G)  = \coprod_n \Hom(G, \U(n))/\U(n),$$
where the quotient on the right is taken with respect to the conjugation action.  Block sum makes this into a strictly commutative topological monoid.  In fact, the sequence of spaces $$\Rep_n (G) = \Hom(G, \U(n))/\U(n)$$ 
form a \e{bipermutative} action sequence for the trivial groups $G_n = \{1\}$, using Kronecker product to define the multiplicative structure. The associated  $E_\infty$ ring spectrum $\Rdef (G)$ satisfies $\Omega^\infty \Rdef (G) \heq \ol{\Rep} (G)$ by Proposition~\ref{nerve}.  
We define
$$\Rdef_* (G) = \pi_* \Rdef (G).$$
The quotient maps $\Hom(G, \U(n)) \to  \Hom(G, \U(n))/\U(n)$ respect block sum and Kronecker product, so we obtain an induced map of $E_\infty$ ring spectra
$$\K(G) \maps \Rdef (G).$$
At this point, we need to pass from the category of  $E_\infty$ ring spectra
to the category of $\bS$--algebras, as constructed in~\cite{EKMM}.  The desired functor is discussed in~\cite[II.3]{EKMM}, and for us the important point is that it induces an isomorphism on the underlying homotopy rings.  We will continue to use the same notation for our $E_\infty$ ring spectra and their associated $\bS$--algebras, but it should be noted that smash products will be formed in the derived category of $\bS$ modules or $\ku$--modules, as appropriate.

In~\cite{Lawson-prod}, it is shown that  when $G$ is finitely generated, there is an equivalence $H\bbZ \sm \K(G) \heq \Rdef (G)$.  Fix a generator of $\pi_2 \ku$ and a map $\beta\co \bS^2 \to \ku$ representing it.  We call $\beta$ the Bott element. The natural map $G\to \{1\}$ induces a map $\ku\to \K(G)$, and the image of $\beta$ (which we still denote simply by $\beta$) is the Bott element in $\K_2 (G)$. Smashing 
$\bS^2 \srt{\beta} \ku$ with $\ku$ induces a map  
$$\Susp^2 \ku \maps \ku$$
which we call the Bott map (and also denote by $\beta$).  Bott periodicity implies that the homotopy cofiber of $\beta$ is the Eilenberg--MacLane spectrum $H\bbZ$.

Smashing the homotopy cofiber sequence 
$$\Susp^2 \ku\maps \ku \maps H\bbZ$$ 
with $\K(G)$ (as $\ku$--modules; that is, applying $\sm_\ku$) and taking homotopy groups now gives a long exact sequence of the form
\begin{eqnarray}\label{Lawson-LES}
\hspace{.3in} \cdots\srm{\partial} \ \K_* (G) \srm{\beta} \K_{*+2} (G) \maps \Rdef_{*+2} (G)  \srm{\partial} \K_{*-3} (G)  \srm{\beta} \cdots,
\end{eqnarray}
wherein the map $\beta$ is multiplication by the Bott element

Lawson also developed a spectral sequence for computing $\Rdef_* (G)$ from the integral homology of the spaces
$$\Irr_n (G) \defn \Rep_n (G) /  \Sum_n (G),$$
where $\Sum_n (G)$ denotes the subspace of reducible representations.  Note that $\Irr_n (G)$ is the one-point compactification of complement of  $\Sum_n (G)$ in $\Rep_n (G)$, and this complement is precisely the subspace of irreducible representations.  The spectral sequence is constructed by considering the tower of spectra 
$$\displaystyle{* = \Rdef_{\leqs 0} (G) \maps \Rdef_{\leqs 1} (G) \maps \Rdef_{\leqs 2} (G) \maps \  \cdots,}$$
where $\Rdef_{\leqs k}$ is the spectrum associated to the subspaces of $\Rep_n (G)$ consisting of representations whose irreducible summands all have dimension at most $k$; note that these subspaces provide a submonoid of $\ol{\Rep(G)}$, and in fact a permutative subsequence of $(\Rep_n (G))_{n=0}^\infty$.  The homotopy colimit of this sequence is $\Rdef (G)$, and Lawson proves that there are homotopy cofiber sequences of spectra
$$\Rdef_{k-1} (G) \maps \Rdef_{k} (G)\maps H\bbZ \sm \Irr_{k}  (G)$$
for each $k\geqs 1$.  

In general, a sequence of spectra 
$$X_0 \srm{f_0} X_1 \srm{f_1} X_2\srm{f_2}  \cdots$$
 gives rise to an
exact couple
$$\xymatrix{\displaystyle { \bigoplus_{q,p} \pi_q X_p} \ar[rr]^{\displaystyle{\oplus (f_p)_*}} &&   \displaystyle{\bigoplus_{q,p} \pi_q X_p} \ar[dl]  \\ & 
\displaystyle{\bigoplus_{q,p}} \pi_q (\hocofib f_p) \ar[ul]^\partial}$$
and hence to a spectral sequence of the form
\begin{equation*}E^1_{p,q} = \pi_{p+q} (\hocofib f_p ) \implies \pi_{p+q} \hocolim_i X_i,
\end{equation*}
with differentials 
 $$d^r_{p,q} \co E^r_{p, q} \maps E^r_{p-r, q+r-1}.$$
Since $H\bbZ$ is the spectrum representing integral homology, in the case at hand we obtain a spectral sequence
\begin{equation}\label{SS}E^1_{p,q} = \wt{H}_{p+q} ( \Irr_{p} (G); \bbZ ) \implies \pi_{p+q} \Rdef (G).\end{equation}
Note that with this indexing, the spectral sequence can be non-zero in the quadrant where $p, q\geqs 0$ and in the region where $-p\leqs q < 0$.
%
\subsection{Groups satisfying Kazhdan's property (T)}\label{T-subsec}

Property (T) has been widely studied since its introduction by Kazhdan in the late 1960s.   Loosely speaking, property (T) is a weak rigidity property for (possibly infinite-dimensional) unitary representations of locally compact groups.   See~\cite{Bekka-T} for background.

We need a lemma regarding unitary representations of property (T) groups.\footnote{I learned this result from Rufus Willett.}  

\begin{lemma} \label{T-comps}
Let $G$ be a discrete group with property $(\e{T})$.   Then the space $\Hom(G, \U(n))/\U(n)$ is a finite, discrete space.
\end{lemma}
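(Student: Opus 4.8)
The plan is to show that each point of $\Hom(G,\U(n))/\U(n)$ is isolated, and then invoke compactness. Compactness of $\Hom(G,\U(n))/\U(n)$ is standard: $\Hom(G,\U(n))$ is a closed subset of the compact space $\U(n)^S$ for any generating set $S$ (using property (T), which guarantees $G$ is finitely generated, as noted in the paper), hence compact, and the quotient of a compact space by a group action is compact. So the crux is showing discreteness: every point is isolated. A compact space with no accumulation points is finite, which gives the conclusion.

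For isolatedness, I would argue as follows. Fix $\rho\co G\to\U(n)$ and let $[\rho]$ denote its class. The key input is the rigidity consequence of property (T): if $G$ has property (T), then any unitary representation that ``almost contains'' a fixed representation actually contains it, and more quantitatively, near a given finite-dimensional representation $\rho$ there are no small genuine deformations --- any $\rho'$ sufficiently close to $\rho$ on the generators is unitarily conjugate to $\rho$. The cleanest route is to first reduce to the case where $\rho$ decomposes into irreducibles, and handle a single irreducible $\sigma$: property (T) implies that the set of equivalence classes of $n$-dimensional unitary representations is ``isolated'' in the Fell topology in the sense that $\sigma$ does not lie in the closure of the complement of $\{[\sigma]\}$; translated into the representation variety this says the $\U(n)$-orbit of $\sigma$ is open in the subspace of representations with the same ``isotypic shape,'' which combined with the description of nearby representations (any representation close to $\rho$ has the same decomposition type, by semicontinuity of multiplicities and the fact that $\U(n)$ is compact) gives that $[\rho]$ is isolated. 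More concretely: property (T) provides a Kazhdan constant $(Q,\varepsilon)$ with $Q$ finite; if $\rho'$ agrees with $\rho$ to within $\varepsilon$ on $Q$ (in operator norm), then the ``difference'' representation argument --- considering $\rho\otimes\overline{\rho'}$ and noting it almost has invariant vectors (namely the vectors encoding the identity intertwiner), hence genuinely has invariant vectors, hence $\rho$ and $\rho'$ share an irreducible subrepresentation --- can be bootstrapped inductively on $n$ to show $\rho'\cong\rho$, i.e. $[\rho']=[\rho]$. This exhibits an open neighborhood of $\rho$ in $\Hom(G,\U(n))$ consisting entirely of representations conjugate to $\rho$, whose image in the quotient is the single point $[\rho]$; so $\{[\rho]\}$ is open.

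So the key steps, in order, are: (1) note $G$ is finitely generated (property (T)), so $\Hom(G,\U(n))$ is compact and hence so is the quotient; (2) set up the Kazhdan constant $(Q,\varepsilon)$ and the tensor-product trick $\rho\otimes\overline{\rho'}$ to detect common subrepresentations; (3) run an induction on $n$ peeling off common irreducible summands to conclude that any $\rho'$ within $\varepsilon$ of $\rho$ on $Q$ is conjugate to $\rho$; (4) deduce that $\{[\rho]\}$ is open in $\Hom(G,\U(n))/\U(n)$, so the quotient is discrete; (5) combine discreteness with compactness to conclude finiteness.

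I expect the main obstacle to be step (3): making the inductive peeling-off argument fully rigorous, in particular controlling how the Kazhdan constant and the closeness parameters degrade at each stage and ensuring that the "difference" intertwiner argument genuinely produces a $\U(n)$-conjugacy rather than merely a weaker equivalence, while keeping everything uniform enough that a single $\varepsilon$-neighborhood of $\rho$ works. An alternative, and perhaps cleaner, way to organize this is to cite the known fact that for a property (T) group the space of equivalence classes of unitary representations of each fixed finite dimension is discrete in the Fell topology (this is essentially due to Wang, whose results the paper already invokes in Section~\ref{image-sec}), and then translate Fell-topology discreteness into discreteness of $\Hom(G,\U(n))/\U(n)$ via the standard comparison between the quotient topology on the representation variety and the Fell topology --- this would let me replace the hands-on induction with a citation, at the cost of needing the topology-comparison lemma.
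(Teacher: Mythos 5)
Your strategy is sound and it is genuinely different from the paper's. The paper's proof runs in the opposite order: it first establishes \emph{finiteness} and then deduces discreteness. Specifically, it quotes Wang's theorem \cite[Theorem 2.5]{Wang} in the form: for a property (T) group, the path component of an \emph{irreducible} $\rho$ in $\Hom(G,\U(n))$ coincides with its conjugation orbit $O_\rho$. Since $\Hom(G,\U(n))$ is compact and triangulable (it is a real algebraic set, $G$ being finitely generated), it has only finitely many path components, so there are only finitely many irreducibles of each dimension up to conjugacy; complete reducibility of unitary representations then gives finiteness of $\Hom(G,\U(n))/\U(n)$, and discreteness is free because a finite Hausdorff space is discrete. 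Your route instead proves \emph{discreteness} directly by a local rigidity argument and then gets finiteness from compactness (and for that direction you do not even need Hausdorffness: a discrete compact space is finite). Your approach is more self-contained but requires the analytic input you flag in step (3); the paper's is shorter because Wang's theorem already packages the rigidity, at the cost of the triangulability input needed to bound the number of path components.

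One remark on your flagged obstacle: the inductive peeling-off in step (3) can be avoided entirely. The standard quantitative form of property (T) (see \cite[Proposition 1.1.9]{Bekka-T}) says that a $(Q,\delta)$-invariant unit vector lies within distance $\delta/\varepsilon$ of the subspace of invariant vectors. Applying this to $\rho\otimes\ol{\rho'}$ on $M_n(\bbC)$ with the Hilbert--Schmidt norm and the almost-invariant vector $I/\sqrt{n}$, one gets, for $\rho'$ close enough to $\rho$ on $Q$, a genuine intertwiner $T$ (i.e.\ $\rho(g)T=T\rho'(g)$ for all $g$) with $\|T-I\|<1$, hence invertible; polar decomposition of $T$ then yields a unitary conjugating $\rho'$ to $\rho$ in one step, with all constants depending only on $n$ and the Kazhdan pair. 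This makes your neighborhood uniform and disposes of the degradation issue. Your alternative of citing Fell-topology isolation is essentially the paper's route in disguise, except that the paper uses Wang's path-component formulation rather than a Fell-to-quotient-topology comparison.
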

\begin{proof} By Wang~\cite[Theorem 2.5]{Wang}, if $\rho\co G\to \U(n)$ is irreducible, then the path component of $\rho$ in $\Hom(G, \U(n))$ coincides with its conjugation orbit $O_\rho$. 
Since $\Hom (G, \U(n))$ is compact and triangulable, it has finitely many path components, so we conclude that there are only finitely many irreducible unitary representations in each dimension.  Since every unitary representation is a direct sum of irreducibles, finiteness of $\Hom(G, \U(n))/\U(n)$ follows immediately, and discreteness follows as well since this space is Hausdorff.
\end{proof}

\begin{proposition}\label{K-T}
Let $G$ be a discrete group satisfying property $(\e{T})$.  Then $\K_{2m+1} (G)$ is trivial for all $m\geqs 0$, and the iterated Bott map
$$\beta^m_* \co \K_0 (G) \isom \Gr (\pi_0 \Rep (G)) \maps \K_{2m} (G)$$
is an isomorphism for all $m\geqs 1$.
\end{proposition}
\begin{proof}  
Lemma~\ref{T-comps} implies that $\Hom(G, \U(n))/\U(n)$ is \e{discrete} for all $n$, so its cohomology vanishes in positive dimensions. Discrete groups with property (T) are finitely generated~\cite[Theorem 1.3.1]{Bekka-T}, so the spectral sequence (\ref{SS}) yields $\pi_* (\Rdef (G)) = 0$ for $*>0$. The long exact sequence (\ref{Lawson-LES}) shows that  $\K_1 (G) \isom \Rdef_1 (G) = 0$ and that $\beta_* \co \K_m (G) \to K_{m+2} (G)$ is an isomorphism for $m\geqs 0$.
\end{proof}  

We need a standard fact about flat bundles, coming from Chern--Weil theory.

\begin{lemma}\label{flat} Let $G$ be a discrete group such that $BG$ has the homotopy type of a finite CW complex, and let $\epsilon^n$ denote the trivial bundle $BG \cross \U(n)$.  Then for every representation $\rho\co G\to \U(n)$, the class $[E_\rho] - [\epsilon^n]$ is torsion in $\wt{K}^0 (BG)$.
\end{lemma}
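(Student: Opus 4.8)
The plan is to show that $[E_\rho] - [\epsilon^n]$ lies in the kernel of the rational Chern character and then invoke finite generation of $\wt{K}^0(BG)$. First I would recall why $E_\rho$ is flat: writing $E_\rho = (EG\cross \bbC^n)/G$ with $G$ acting on $\bbC^n$ through $\rho$, the product connection on $EG\cross \bbC^n$ is $G$--invariant since $G$ is discrete, and hence descends to a flat connection on $E_\rho$. By Chern--Weil theory the rational Chern classes $c_i(E_\rho)\in H^{2i}(BG;\bbQ)$ then vanish for all $i\geqs 1$ --- this is exactly the input used in the proof of Proposition~\ref{tf}.

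Since the Chern character is a polynomial with rational coefficients and no constant term in the Chern classes, vanishing of $c_i(E_\rho)$ for $i\geqs 1$ gives $\mathrm{ch}([E_\rho] - [\epsilon^n]) = 0$ in $\wt{H}^{\mathrm{even}}(BG;\bbQ)$. Now, because $BG$ has the homotopy type of a finite CW complex, $\wt{K}^0(BG)$ is a finitely generated abelian group and the Chern character induces an isomorphism $\wt{K}^0(BG)\otimes\bbQ \srm{\isom} \wt{H}^{\mathrm{even}}(BG;\bbQ)$; consequently the kernel of $\mathrm{ch}\co \wt{K}^0(BG)\to \wt{H}^{\mathrm{even}}(BG;\bbQ)$ is precisely the torsion subgroup of $\wt{K}^0(BG)$. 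Combining the two steps shows that $[E_\rho] - [\epsilon^n]$ is torsion.

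I do not anticipate a genuine obstacle here: the only points needing a word of justification are that $E_\rho$ really carries a flat connection (immediate since $G$ is discrete) and the standard identification, for finite complexes, of $\ker(\mathrm{ch})$ with the torsion subgroup of $K$--theory. Both are routine, and in fact this lemma is a weaker relative of Proposition~\ref{tf}, whose proof already contains the essential mechanism; I would phrase the argument to mirror that one.
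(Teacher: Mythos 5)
Your argument is correct and is essentially the paper's: the lemma is justified there by citing \cite[Theorem 3.5]{Baird-Ramras-smoothing}, whose mechanism is exactly the one you describe (flatness of $E_\rho$ forces the rational Chern classes, hence the reduced Chern character of $[E_\rho]-[\epsilon^n]$, to vanish, and rational injectivity of the Chern character on a finite complex identifies its kernel with the torsion subgroup of $\wt{K}^0(BG)$). The one point to phrase carefully is that Chern--Weil theory lives on smooth manifolds while $BG$ is only a finite CW complex, so before speaking of a flat connection one should replace $BG$ by a smooth compact regular neighborhood of an embedding into some $\bbR^N$ (or simply defer to the cited reference, as the paper does).
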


For a complete proof (of a much more general statement) 
see~\cite[Theorem 3.5]{Baird-Ramras-smoothing}.

\begin{proposition}$\label{T-prop}$
Let $G$ be a discrete group satisfying property $(\e{T})$, and assume that $BG$ has the homotopy type of a finite CW complex.  Then the reduced unitary topological Atiyah--Segal map 
$$\wt{\alpha}_m \co \rK_* (G) \maps \wt{K}^{-m} (G)$$
is zero  when $m$ is odd, and its image is torsion  when $m$ is even.
\end{proposition}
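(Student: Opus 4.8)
The plan is to treat the two parities of $m$ separately, deducing the even case from the dimension-zero computation by multiplying with the Bott class $\beta\in\pi_2(\ku)$.

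\emph{The case $m$ odd.} Here the assertion is essentially formal. By Corollary~\ref{rKdef}, $\rK_m(G)$ is a natural direct summand of $\K_m(G)$, and $K^{-m}(*)=0$ for $m$ odd, so $\rK_m(G)\isom\K_m(G)$; but $\K_m(G)=0$ for $m$ odd by Lemma~\ref{K-T}. Hence $\rK_m(G)=0$ and $\wt{\alpha}_m$ is the zero map. (No multiplicative structure is needed here.)

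\emph{The case $m=2k$.} The key input is that $\alpha$ is not merely a map of $E_\infty$ ring spectra (Theorem~\ref{TAS-mult}) but in fact a map over $\ku$. Indeed, applying naturality of $\alpha$ to the homomorphism $G\to\{1\}$, together with the identifications $\K(\{1\})=\mcK(*)=\ku$, shows that the unreduced map $\alpha_*\co\K_*(G)\to\mcK_*(BG)\isom K^{-*}(BG)$ is a homomorphism of graded $\pi_*(\ku)=\bbZ[\beta]$-algebras; in particular it commutes with multiplication by $\beta$. By Lemma~\ref{K-T} the iterated Bott map $\beta^k\co\K_0(G)\to\K_{2k}(G)$ is an isomorphism, so every element of $\K_{2k}(G)$ is of the form $\beta^k x$ with $x\in\K_0(G)$, whence $\Img(\alpha_{2k})=\beta^k\cdot\Img(\alpha_0)$. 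By Theorem~\ref{TAS} we have $\alpha_0([\rho])=[E_\rho]$, and by Lemma~\ref{flat} the difference $[E_\rho]-[\epsilon^{n}]$ is torsion in $\wt{K}^0(BG)$; since $\K_0(G)$ is generated by such classes, after tensoring with $\bbQ$ this gives $\Img(\alpha_0)\otimes\bbQ\subseteq\bbQ\cdot([\epsilon^1]\otimes 1)$, so $\Img(\alpha_0)$, and therefore $\Img(\alpha_{2k})=\beta^k\cdot\Img(\alpha_0)$, has rank at most $1$.

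To descend to the reduced map, combine the last observation with the natural splittings $\alpha_*=\wt{\alpha}_*\oplus\mathrm{Id}_{\pi_*\ku}$ of equation~(\ref{alpha-split}) and $K^{-2k}(BG)\isom\wt{K}^{-2k}(BG)\oplus\pi_{2k}(\ku)$. These give $\Img(\alpha_{2k})=\Img(\wt{\alpha}_{2k})\oplus\pi_{2k}(\ku)$; since $\pi_{2k}(\ku)\isom\bbZ$ and rank is additive over direct sums, $\mathrm{rank}\,\Img(\wt{\alpha}_{2k})+1=\mathrm{rank}\,\Img(\alpha_{2k})\leqs 1$, so $\mathrm{rank}\,\Img(\wt{\alpha}_{2k})=0$. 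As $\wt{K}^{-2k}(BG)$ is finitely generated, this means $\Img(\wt{\alpha}_{2k})$ is torsion, completing the proof. The step most in need of care is the first one of the even case — verifying that $\alpha$ genuinely intertwines the Bott map on $\K_*(G)$ with multiplication by $\beta$ on $K^{-*}(BG)$ — since it is exactly this compatibility, together with the vanishing of $\Rdef_n(G)$ for $n>0$ underlying Lemma~\ref{K-T}, that propagates the dimension-zero bound to all even dimensions.
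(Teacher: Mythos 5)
Your proof is correct and follows essentially the same route as the paper: the odd case is immediate from Lemma~\ref{K-T}, and the even case combines Bott periodicity of $\K_*(G)$ (Lemma~\ref{K-T}) with the multiplicativity of $\alpha$ (Theorem~\ref{TAS-mult}), the dimension-zero description from Theorem~\ref{TAS} together with Lemma~\ref{flat}, and the splitting off of the $\pi_*\ku$ summand to pass to the reduced map. Your phrasing of the key compatibility as ``$\alpha_*$ is a $\pi_*(\ku)$-algebra homomorphism'' is just a repackaging of the paper's commutative square~(\ref{sq}).
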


\begin{proof} For $m$ odd, this is immediate from Proposition~\ref{K-T}, so we consider the even case.  
By Theorem~\ref{TAS}, the image of $\alpha_0$ 
consists of $K$--theory classes of  the form $[E_\rho]$, where $\rho\co G\to \U(n)$ is a single representation.  
Lemma~\ref{flat} implies that  the image of $\alpha_0$ becomes torsion after modding out the summand $\pi_0 \ku \isom \bbZ$ corresponding to the trivial bundles. But since  $\alpha_0 = \wt{\alpha}_0 \oplus \textrm{Id}_{\pi_0 \ku}$ (see (\ref{alpha-split})), this quotient is isomorphic to the image of $\wt{\alpha}_0$.

Now consider the  commutative diagram 
\begin{equation}\label{sq}
\xymatrix{ \K_{2m} (G) \ar[r]^-{\alpha_{2m}} & \mcK_{2m} (BG) \\
\K_0 (G) \ar[u]_-{\isom}^-{\cdot \beta^m} \ar[r]^-{\alpha_0} & \mcK_{0} (BG), \ar[u]_-{\cdot \alpha_{2m} (\beta^m)}
}
\end{equation}
where the vertical maps are given by multiplication. 
Each group in the diagram contains a $\bbZ$ summand arising from the homotopy of $\ku$, via the maps induced by $G\to \{1\}$ and $BG\to \{*\}$, and these summands are complementary to the reduced subgroups.
All four maps in the diagram are isomorphisms when restricted to these $\bbZ$ summands, 
so Proposition~\ref{T-prop} implies that the image of the lower composite, and hence also the upper composite, has rank 1.  Since $\cdot \beta^m \co \K_0 (G) \to \K_{2m} (G)$ is an isomorphism, we conclude that the image of $\alpha_{2m}$ has rank 1, and finally that 
 the image of $\wt{\alpha}_{2m}$ is torsion, as desired.
\end{proof}

 \begin{proof}[Proof of Theorem~\ref{T-thm}] By Proposition~\ref{T-prop}  and Theorem~\ref{TAS}, we know that
 $$\wt{\alpha}_m ([\rho]) = \pi_*^{-1} \left([E_\rho] - [E_{\wt{\rho(1)}}]\right)$$ 
 is torsion in $\wt{K}^0 (S^{m} \sm BG)$.  But the projection $\pi \co S^{m} \cross BG \to S^{m}\sm BG$ induces a (split) injection on reduced $K$--theory, so for some $k\geqs 1$ we have $k[E_\rho]  =  k[E_{\wt{\rho(1)}}]$ in $K^0 (S^{m} \cross BG)$. Hence it will suffice to show that $l[E_{\wt{\rho(1)}}]$ represents a torsion class in reduced $K$--theory for some $l\geqs 1$.  This holds for the bundle $E_{\rho(1)} \to BG$ by   Lemma~\ref{flat}, and since $E_{\wt{\rho(1)}}$ is a pullback of $E_{\rho(1)}$, the proof is complete.
  \end{proof}

 \begin{remark}
Theorem~\ref{T-thm} provides a partial answer to~\cite[Question 3.20]{RWY}.  If $G$ satisfies the hypotheses of Theorem~\ref{T-thm}, then no non-trivial class in the rational $K$--homology of $BG$ can be detected, in the sense of~\cite[Definition 3.4]{RWY}, by a spherical family of representations.  However, it remains possible that non-trivial classes can be detected by non-spherical families of representations.
 \end{remark}  
%
%
%
  
\subsection{The topological Atiyah--Segal map for the Heisenberg group}\label{H-subsec}
In the case of the 3--dimensional integral Heisenberg group $H$, Lawson showed in~\cite{Lawson-thesis, Lawson-prod} that $\Rdef_m (H) = 0$ for $m\geqs 3$.  
In dimension 1, Theorem~\ref{BR} tells us that the image of $\alpha_1 = \alpha_1^H$ has rank at most $ \beta_1 (N^3) = 2$.
Reasoning similar to the proof  of Proposition~\ref{T-prop}  yields the following result.

\begin{proposition}$\label{alpha-H}$
The image of the  unitary topological Atiyah--Segal map
$$\alpha_{2m+1} \co \K_{2m+1}  (H) \maps K^{-(2m+1)} (BH) \isom  K^{-(2m+1)} (N^3)$$
has rank at most $2$ for each $m\geqs 0$.  Hence $\alpha_*$ is not surjective in odd dimensions. 
\end{proposition}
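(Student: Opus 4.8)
The plan is to mimic the proof of Proposition~\ref{T-prop}, feeding in Lawson's vanishing statement $\Rdef_m(H)=0$ for $m\geqs 3$ in place of the stronger input available there for property $(\mathrm{T})$ groups. The base case $m=0$ is immediate from what is already known: Theorem~\ref{BR} bounds the rank of $\mathrm{Im}(\alpha_1)\subseteq K^{-1}(N^3)$ by $\beta_1(N^3)=2$ (using $N^3\heq BH$ and $H_1(N^3;\bbZ)\isom\bbZ^2$), whereas $K^{-1}(N^3)$ has rank $\beta_1(N^3)+\beta_3(N^3)=3$ since the Chern character is a rational isomorphism; hence $\alpha_1$ is not surjective. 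So the content lies in the range $m\geqs 1$, where Theorem~\ref{BR} only yields the weaker bound $3$.

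First I would verify that the iterated Bott map $\beta^m_*\co\K_1(H)\maps\K_{2m+1}(H)$ is surjective for every $m\geqs 1$. This is read off the long exact sequence $(\ref{Lawson-LES})$: for each odd $n\geqs 3$, the portion $\K_{n-2}(H)\xmaps{\beta}\K_n(H)\maps\Rdef_n(H)$ together with $\Rdef_n(H)=0$ shows that $\beta\co\K_{n-2}(H)\to\K_n(H)$ is onto, so $\beta^m_*$, being the composite of the surjections $\K_1(H)\to\K_3(H)\to\cdots\to\K_{2m+1}(H)$, is itself surjective.

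Next I would invoke multiplicativity of the topological Atiyah--Segal map. By Theorem~\ref{TAS-mult}, $\alpha$ is a morphism of $E_\infty$ ring spectra; evaluating the naturality square of $\alpha$ on the projection $H\to\{1\}$ (where $\K(\{1\})=\mcK(*)=\ku$ and $\alpha^{\{1\}}$ is an equivalence) identifies $\alpha$ with a morphism of $\ku$-algebras, so $\alpha$ intertwines the Bott maps on $\K_*(H)$ and on $\mcK_*(N^3)$. This gives a commutative square
$$\xymatrix{\K_{2m+1}(H)\ar[r]^-{\alpha_{2m+1}} & \mcK_{2m+1}(N^3)\\
\K_1(H)\ar[u]^-{\beta^m_*}\ar[r]^-{\alpha_1} & \mcK_1(N^3)\ar[u]_-{\beta^m_*}}$$
in which the left vertical map is surjective by the previous step. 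Hence $\mathrm{Im}(\alpha_{2m+1})=\beta^m_*\bigl(\mathrm{Im}(\alpha_1)\bigr)$ is the image of $\mathrm{Im}(\alpha_1)$ under a group homomorphism, so $\mathrm{rank}\,\mathrm{Im}(\alpha_{2m+1})\leqs\mathrm{rank}\,\mathrm{Im}(\alpha_1)\leqs\beta_1(N^3)=2$, the last inequality again by Theorem~\ref{BR}. Since $\mcK_{2m+1}(N^3)\isom K^{-(2m+1)}(N^3)$ has rank $\beta_1(N^3)+\beta_3(N^3)=3>2$, we conclude that $\alpha_{2m+1}$ is not surjective for any $m\geqs 0$.

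The step I expect to demand the most care is the compatibility of $\alpha$ with the Bott maps --- i.e.\ upgrading ``$\alpha_*$ is a ring homomorphism'' (Theorem~\ref{TAS-mult}) to ``$\alpha$ is a map of $\ku$-algebras'', so that multiplication by powers of the Bott class is preserved; this rests on naturality of $\alpha$ in the group, applied to $H\to\{1\}$. Everything after that is rank bookkeeping with Lawson's vanishing range and the elementary cohomology of $N^3$.
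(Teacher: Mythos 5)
Your proposal is correct and follows exactly the route the paper intends: the paper's "proof" is the one-line remark that Lawson's vanishing $\Rdef_m(H)=0$ for $m\geqs 3$ plus reasoning as in Proposition~\ref{T-prop} gives the result, and you have correctly filled in that reasoning (surjectivity of the iterated Bott map on $\K_{2*+1}(H)$ from the long exact sequence (\ref{Lawson-LES}), the multiplicative commutative square as in (\ref{sq}) with right-hand vertical map multiplication by $\alpha(\beta^m)$, and the rank bound from Theorem~\ref{BR}). The only caveat worth noting is cosmetic: since the paper does not know that $\mcK_*(BH)$ is Bott-periodic, the right vertical map should be phrased as multiplication by the element $\alpha(\beta^m)$ in the ring $\mcK_*(BH)$ rather than as a "Bott map" there — but your argument only uses that this is a group homomorphism, which is all that is needed.
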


%
%

\def\cprime{$'$}
\providecommand{\bysame}{\leavevmode\hbox to3em{\hrulefill}\thinspace}
\providecommand{\MR}{\relax\ifhmode\unskip\space\fi MR }
\providecommand{\MRhref}[2]{%
  \href{http://www.ams.org/mathscinet-getitem?mr=#1}{#2}
}
\providecommand{\href}[2]{#2}

\end{document}